\numberwithin{equation}{section}
\theoremstyle{plain}
\newtheorem{Th}{Theorem}[section]
\newtheorem{theorem}[Th]{Theorem}
\newtheorem{lemma}[Th]{Lemma}
\newtheorem{corollary}[Th]{Corollary}
\newtheorem{proposition}[Th]{Proposition}
 \theoremstyle{definition}
\newtheorem{definition}[Th]{Definition}
\newtheorem{?}[Th]{Problem}
\newcommand{\im}{\operatorname{im}}
\newcommand{\coker}{\operatorname{coker}}
\newcommand{\Hom}{{\rm{Hom}}}
\DeclareMathOperator{\Tor}{Tor}
\newcommand{\Mod}[1]{\ (\mathrm{mod}\ #1)}
\begin{document}

\title{The cyclic homology of $k[x_1,x_2,\ldots,x_d]/(x_1,x_2,\ldots,x_d)^2$}
\author{Emily Rudman}
\address{Mathematics Department, The Ohio State University, 100 Math Tower, 231 W 18th Ave.,
  Columbus, OH 43210, USA}  
\email{rudman.21@osu.edu}

\date{\today}

\maketitle



\begin{abstract}
 The Hochschild homology of the ring $k[x_1,x_2,\ldots,x_d]/(x_1,x_2,\ldots,x_d)^2$ has been known and calculated several ways. This paper uses those calculations to calculate cyclic, negative cyclic, and periodic cyclic homology of $k[x_1,x_2,\ldots,x_d]/(x_1,x_2,\ldots,x_d)^2$ over $k$.
\end{abstract}

\maketitle


\section{Introduction}

    The goal of this paper is to compute the cyclic homology, negative cyclic homology, and periodic cyclic homology of the ring $A =k[x_1,x_2,\ldots,x_d]/(x_1,x_2,\ldots,x_d)^2$ over $k$ for $k$ an arbitrary commutative ring and to give explicit formulas for the cases $k = \mathbb{Q}$ and $k = \mathbb{Z}$. Since the complexes calculating all these homologies for $k[x_1,x_2,\ldots,x_d]/(x_1,x_2,\ldots,x_d)^2$ over $k$ can be obtained from the complexes calculating these homologies for $\mathbb{Z}[x_1,x_2,\ldots,x_d]/(x_1,x_2,\ldots,x_d)^2$ over $\mathbb{Z}$, the explicit results for general $k$ can be obtained from those for $\mathbb{Z}$ by using the Universal Coefficient Theorem.

An important motivation for studying Hochschild-type invariants is given by the Dennis trace map from algebraic $K$-theory to Hochschild homology. It factors through negative cyclic homology, 

\[\begin{tikzcd}[column sep=small]
K_{*}(R) \arrow{dr}{} \arrow{rr}{\text{Dennis trace}}& & HH_{*}(R) \\
 & HC_{*}^{-}(R) \arrow{ur}{} 
\end{tikzcd}
\]

$\\*$and it has been proven by Goodwillie \cite{GW} that $HC_{*}^{-}(R)$ is a much better approximation of $K_{*}(R)$ than $HH_{*}(R)$. 

Section 2 introduces the needed definitions and Section 3 contains the calculations. It starts by showing how the Hochschild complex breaks down by weights and cyclic words. We extend this idea to the Tsygan double complex and then compare the result to a specific Tor computation to get the general cyclic homology result. 

Section 4 computes the cyclic homology for $k=\mathbb{Q}$, which follows easily from Section 3 since $\mathbb{Q}$ is a projective $\mathbb{Q}[C_{w}]$-module. Section 5 requires more careful analysis for $k=\mathbb{Z}$. As explained above, for a general ring $k$, since
$$
\big(k[x_1,x_2,\ldots,x_d]/(x_1,x_2,\ldots,x_d)^2\big)^{\otimes_{k} \ell} \cong k\otimes_{\mathbb{Z}}\big(\mathbb{Z}[x_1,x_2,\ldots,x_d]/(x_1,x_2,\ldots,x_d)^2\big)^{\otimes_{\mathbb{Z}} \ell}
$$
by the Universal Coefficient Theorem \newline$
HH_{\ell}^{k}\big(k[x_1,x_2,\ldots,x_d]/(x_1,x_2,\ldots,x_d)^2\big) \cong 
$
$$
k\otimes_{\mathbb{Z}} HH_{\ell}^{\mathbb{Z}}\big(\mathbb{Z}[x_1,\ldots,x_d]/(x_1,\ldots,x_d)^2\big) \bigoplus \Tor \big(k, HH_{\ell - 1}^{\mathbb{Z}}\big(\mathbb{Z}[x_1,\ldots,x_d]/(x_1,\ldots,x_d)^2\big) \big),
$$
$\\*$and similarly for cyclic, negative cyclic, and periodic cyclic homology.

After the cyclic homology computations, in Section 6 and Section 7 the negative cyclic homology and periodic cyclic homology computations are similarly given for  $k$, $k=\mathbb{Q}$, and $k=\mathbb{Z}$.

\section{Definitions}
\begin{definition}\label{HB}
Let $k$ be a commutative ring with unit and $A$ be a $k$-algebra with unit. The \textbf{Hochschild complex} of $A$ over $k$ consists in degree $n$ of 
$$
C_{n}(A) = A^{\otimes n+1} \coloneqq \underbrace{A \otimes_{k} A \otimes_{k} \cdots \otimes_{k} A \otimes_{k} A }_{n+1 \textrm{ times}}$$.
$\\*$with respect to the Hochschild boundary
\begin{center}
      $b_{n}(a_{0}\otimes a_{1}\otimes\cdots\otimes a_{n}) =  \sum\limits_{i=0}^{n} (-1)^{i} d_{i}(a_{0}\otimes a_{1}\otimes\cdots\otimes a_{n}),$
\end{center}
where $d_{i}: A^{\otimes n+1} \rightarrow A^{\otimes n}$, $0 \leq i \leq n$ are defined as
\begin{center}
 $d_{i}(a_{0}\otimes\cdots\otimes a_{n}) = a_{0}\otimes \cdots\otimes a_{i}a_{i+1}\otimes\cdots\otimes a_{n}$
 
 $d_{n}(a_{0}\otimes a_{1}\otimes\cdots\otimes a_{n}) = a_{n}a_{0}\otimes a_{1} \otimes \cdots\otimes a_{n-1}$.
 \end{center}
 \end{definition}
\begin{definition}We also define $b'_{n}:A^{\otimes n+1} \rightarrow A^{\otimes n}$ by

\begin{center}
    
$b'_{n}(a_{0}\otimes a_{1}\otimes\cdots\otimes a_{n}) =  \sum\limits_{i=0}^{n-1} (-1)^{i} d_{i}(a_{0}\otimes a_{1}\otimes\cdots\otimes a_{n}).$

\end{center}
The following chain complex $C'_{*}(A)$ is called the $\textbf{Bar}$ $\textbf{Resolution}$ $\textbf{of}$ $\textbf{A}$:

\medskip

\begin{center}
    
$C'_{*}(A) = \cdots\xrightarrow{b'_{n+1}} A^{\otimes n+1} \xrightarrow{\mathrel{\phantom{=}}b'_{n }\mathrel{\phantom{=}}} A^{\otimes n} \xrightarrow {b'_{n-1}} A^{\otimes n-1} \xrightarrow {b'_{n-2}}\cdots \xrightarrow{\mathrel{\phantom{=}}b'_{2}\mathrel{\phantom{=}}} A^{\otimes 2} \xrightarrow{\mathrel{\phantom{=}}b'_{1}\mathrel{\phantom{=}}} A \xrightarrow{\mathrel{\phantom{=}}\mathrel{\phantom{=}}\mathrel{\phantom{=}}} 0 $ .

\end{center}
In particular, it is acyclic.
\end{definition}

\begin{definition}\label{NHC}
 The \textbf{normalized or reduced Hochschild complex}, denoted by $\bar{C_{*}}({A})$, is the quotient of $C_{*}({A})$ by all sums of degenerate elements (elements $a_{0}\otimes a_{1}\otimes\cdots\otimes a_{n}$ where at least one of $a_{1}, \hdots, a_{n}$ is in $k$). The reduction map is a quasi-isomorphism from the Hochschild complex to the reduced Hochschild complex by Lemma 1.1.15 \cite{CH} with the maps $b_{n}$ induced by those of the complex $C_{*}(A)$. $\bar{C_{*}}({A})$ is written as

\begin{center}
$\bar{C_{*}}({A})=\cdots\xrightarrow{b_{n+1}} A\otimes \bar{A}^{\otimes n} \xrightarrow{\mathrel{\phantom{=}}b_{n}\mathrel{\phantom{=}}} A\otimes\bar{A}^{\otimes n-1} \xrightarrow {b_{n-1}}\cdots \xrightarrow{\mathrel{\phantom{=}}b_{2}\mathrel{\phantom{=}}} A\otimes\bar{A} \xrightarrow{\mathrel{\phantom{=}}b_{1}\mathrel{\phantom{=}}} A \xrightarrow{\mathrel{\phantom{=}}\mathrel{\phantom{=}}\mathrel{\phantom{=}}}0$.

\end{center}

 \end{definition}

\begin{definition}\label{CH} The \textbf{cyclic homology of A}, denoted by $HC_{*}(A)$, is the total homology of the first quadrant double complex:

\begin{center}
\begin{tikzcd}
    \vdots\arrow{d} & \vdots\arrow{d} & \vdots\arrow{d} & \vdots\arrow{d} \\
    A^{\otimes 3}\arrow{d}{b_{2}} & A^{\otimes 3}\arrow{l}{1-t_{2}}\arrow{d}{b'_{2}} & A^{\otimes 3}\arrow{l}{N_{2}}\arrow{d}{b_{2}} & A^{\otimes 3}\arrow{l}{1-t_{2}}\arrow{d}{b'_{2}} &{\cdots} \arrow{l}{N_{2}} \\
    A^{\otimes 2}\arrow{d}{b_{1}} & A^{\otimes 2}\arrow{l}{1-t_{1}}\arrow{d}{b'_{1}} & A^{\otimes 2}\arrow{l}{N_{1}}\arrow{d}{b_{1}} & A^{\otimes 2}\arrow{l}{1-t_{1}}\arrow{d}{b'_{1}} &{\cdots}\arrow{l}{N_{1}} \\
    A & A\arrow{l}{1-t_{0}} & A \arrow{l}{N_{0}} & A\arrow{l}{1-t_{0}} &{\cdots}\arrow{l}{N_{0}}
\end{tikzcd}
\end{center}
where the maps $t_{n}$ and $N_{n}$ are defined as
\begin{equation}\label{tn}
t_{n}(a_{0}\otimes a_{1}\otimes \cdots\otimes a_{n}) = (-1)^{n}(a_{n}\otimes a_{0}\otimes a_{1} \otimes \cdots \otimes a_{n-1})
\end{equation}
\begin{equation}\label{Nn}
N_{n} = 1 + t_{n} + t_{n}^{2} + \cdots + t_{n}^{n}.
\end{equation}
\end{definition}
This first quadrant double complex is called the Tsygan complex or the cyclic bicomplex. We can compute the total homology of the Tsygan complex using the spectral sequence associated to filtration by columns.  The $E^{2}$ page consists of $HH_{*}(A)$ in even columns and $0$ in odd ones.

\begin{center}
\begin{tikzcd}
    \vdots & \vdots& \vdots & \vdots& \vdots\\
    HH_{2}(A) &0 & HH_{2}(A) & 0 & HH_{2}(A)&{\cdots}  \\
   HH_{1}(A) & 0 & HH_{1}(A)\arrow{llu}{\partial^{2}}& 0 & HH_{1}(A)\arrow{llu}{\partial^{2}}&{\cdots} \\
    HH_{0}(A) & 0 & HH_{0}(A)\arrow{llu}{\partial^{2}} & 0 & HH_{0}(A)\arrow{llu}{\partial^{2}}&{\cdots}
\end{tikzcd}
\end{center}



$\\*$The map $s:A^{\otimes n} \rightarrow A^{\otimes n+1}$, given by 
$s(a_{0}\otimes a_{1} \otimes \cdots \otimes a_{n}) = 1\otimes a_{0}\otimes a_{1} \otimes \cdots \otimes a_{n}$,
 is a lift of the map $b'_{n}$ on $\ker(b'_{n-1})$. In other words, it is a chain homotopy between the identity map and the zero map on $(A^{\otimes n+1}, b')$. So $\partial^{2}: HH_{n-1}(A) \rightarrow HH_{n}(A)$ can be induced on the chain level by $(1-t_{n}) s  N_{n-1}$. This leads to another bicomplex that can be used to compute $HC_{*}(A)$.

\begin{definition} The \textbf{Connes boundary map $B$} is the map $B: A^{\otimes n} \xrightarrow{} A^{\otimes (n+1)}$ given by:
\begin{center}
  $B= (1-t_{n}) s  N_{n-1}$.
\end{center}
One can equivalently look at the bicomplex $B(A)$
\begin{center}
\begin{tikzcd}
    \vdots\arrow{d} & \vdots\arrow{d} & \vdots\arrow{d}\\
    A^{\otimes 3}\arrow{d}{b_{2}} & A^{\otimes 2}\arrow{d}{b_{1}}\arrow{l}{B} &A\arrow{l}{B} \\
    A^{\otimes 2}\arrow{d}{b_{1}} &  A\arrow{l}{B} &\\
    A &  &  
\end{tikzcd}
\end{center}
which essentially eliminates the odd-numbered columns. Once the even-numbered columns are shifted to the left, they need to be raised to preserve the total degree. By \cite{CH} (Theorem 2.1.8), the total homology of $B(A)$ is also $HC_{*}(A)$.
\end{definition}
\begin{definition}\label{negative} The \textbf{negative cyclic homology of A}, denoted by $HC^{-}_{*}(A)$,  is the total homology of the second quadrant double complex:

\begin{center}
\begin{tikzcd}
    & \vdots\arrow{d} & \vdots\arrow{d} & \vdots\arrow{d} & \vdots\arrow{d} \\
   {\cdots}&  A^{\otimes 3}\arrow{l}{1-t_{2}}\arrow{d}{b'_{2}} & A^{\otimes 3}\arrow{l}{N_{2}}\arrow{d}{b_{2}} & A^{\otimes 3}\arrow{l}{1-t_{2}}\arrow{d}{b'_{2}} & A^{\otimes 3}\arrow{l}{N_{2}}\arrow{d}{b_{2}} & \\
   {\cdots}&  A^{\otimes 2}\arrow{l}{1-t_{1}}\arrow{d}{b'_{1}} & A^{\otimes 2}\arrow{l}{N_{1}}\arrow{d}{b_{1}} & A^{\otimes 2}\arrow{l}{1-t_{1}}\arrow{d}{b'_{1}} & A^{\otimes 2}\arrow{l}{N_{1}}\arrow{d}{b_{1}} & \\
  {\cdots}& A\arrow{l}{1-t_{0}} & A\arrow{l}{N_{0}} & A \arrow{l}{1-t_{0}} & A\arrow{l}{N_{0}} &
\end{tikzcd}
\end{center}
\end{definition}

\begin{definition}\label{periodic} The \textbf{periodic cyclic homology of A}, denoted by $HP_{*}(A)$, is the total homology of the first and second quadrant double complex:

\begin{center}
\begin{tikzcd}
    & \vdots\arrow{d} & \vdots\arrow{d} & \vdots\arrow{d} &  \\
   {\cdots}&  A^{\otimes 3}\arrow{l}{1-t_{2}}\arrow{d}{b'_{2}} & A^{\otimes 3}\arrow{l}{N_{2}}\arrow{d}{b_{2}} & A^{\otimes 3}\arrow{l}{1-t_{2}}\arrow{d}{b'_{2}} & {\cdots}\arrow{l}{N_{2}} & \\
   {\cdots}&  A^{\otimes 2}\arrow{l}{1-t_{1}}\arrow{d}{b'_{1}} & A^{\otimes 2}\arrow{l}{N_{1}}\arrow{d}{b_{1}} & A^{\otimes 2}\arrow{l}{1-t_{1}}\arrow{d}{b'_{1}} & {\cdots}\arrow{l}{N_{1}} & \\
  {\cdots}& A\arrow{l}{1-t_{0}} & A\arrow{l}{N_{0}} & A \arrow{l}{1-t_{0}} & {\cdots}\arrow{l}{N_{0}} &
\end{tikzcd}
\end{center}
$\\*$where the even columns are the Hochschild complex and the odd columns are the bar complex. 
\end{definition}
Cyclic homology is closely connected to taking a homotopy quotient of a circle action on Hochschild homology, whereas negative cyclic homology is closely connected to taking homotopy fixed points of that action.

\section{Cyclic Homology of $k[x_1,x_2,\hdots,x_d]/\mathfrak{m}^2$}

\subsection{Grading HH by weight} \hfill

Let $A = k[x_1,x_2,\hdots,x_d]/\mathfrak{m}^2$, where $k$ is any commutative unital ring and $\mathfrak{m}$ is the ideal $(x_1,x_2,\hdots,x_d)$. Computing $HH_{n}(A)$ is easier to do if we break the Hochschild complex down by weights, where the weight  of a tensor monomial is the total number of $x'_{i}s$ in it. We can do this because the Hochschild boundary maps preserve weight. Thus,    $C_{*}(A) \cong \displaystyle\bigoplus_{w=0}^{\infty}C_{*}^{(w)}(A)$ where $C_{*}^{(w)}(A)$ is the subcomplex consisting of all elements in $C_{*}(A)$ of weight $w$ and
$HH_{*}(A) \cong \displaystyle\bigoplus_{w=0}^{\infty}HH_{*}^{(w)}(A).$

When computing $HH_{n}^{(w)}(A)$, we can again look at the normalized Hochschild complex. The only nondegenerate elements of weight $w \neq 0$ are in the $(w)^{th}$ and $(w-1)^{th}$ levels of the normalized complex. The only nondegenerate elements of weight $w=0$ are elements from $k$ in the $0^{th}$ level of the complex. The elements of weight $w$ (for $w > 0$) that are in the ($w$)$^{th}$ level of the normalized complex are spanned by the nondegenerate tensor monomials
\begin{center}
$ 1\otimes x_{j_{1}}\otimes\cdots\otimes x_{j_{w}}   $ where $j_{i} \in \{1,2,\hdots,d\}$.
\end{center}
The elements of weight $w$ (for $w > 0$) that are in the ($w-1$)$^{th}$ level of the reduced complex are spanned by the nondegenerate tensor monomials 
\begin{center}
$x_{j_{1}}\otimes\cdots\otimes x_{j_{w}}   $ where $j_{i} \in \{1,2,\hdots,d\}$.
\end{center}
Let $\bar{\mathfrak{m}} = \mathfrak{m} / \mathfrak{m}^{2}$. Note that $\bar{\mathfrak{m}}$ is the free $k$-module on $[x_{1}]$, $[x_{2}], \hdots,[x_{d}]$ which by abuse of notation we call $x_{1}$, $x_{2},\hdots,x_{d}$. Therefore, the normalized Hochschild complex for weight $w$ (for $w > 0$) is
\begin{center}
\begin{tikzcd}
    
   {\cdots}\arrow{r}&  0\arrow{r} & 1\otimes \bar{\mathfrak{m}}^{\otimes w}\arrow{r}{b_{w}} & \bar{\mathfrak{m}}^{\otimes w}\arrow{r}& 0\arrow{r} & {\cdots}
\end{tikzcd}.
\end{center}
Consider the following chain map:
\begin{center}
\begin{tikzcd}
    
   {\cdots}\arrow{r}&  0\arrow{r}\arrow{d} & 1\otimes \bar{\mathfrak{m}}^{\otimes w}\arrow{r}{b_{w}}\arrow{d}{f} & \bar{\mathfrak{m}}^{\otimes w}\arrow{r}\arrow{d}{id} & 0\arrow{r}\arrow{d} & {\cdots}\\
  {\cdots}\arrow{r}& 0\arrow{r} & \bar{\mathfrak{m}}^{\otimes w}\arrow{r}{1-t_{w-1}} & \bar{\mathfrak{m}}^{\otimes w} \arrow{r} & 0\arrow{r} & {\cdots}\> ,
\end{tikzcd}
\end{center}
where $f(1\otimes x_{j_{1}}\otimes\cdots\otimes x_{j_{w}})=x_{j_{1}}\otimes\cdots\otimes x_{j_{w}}$. Note
\begin{center}
$
b_{w}(1\otimes x_{j_{1}}\otimes\cdots\otimes x_{j_{w}})  = (1-t_{w-1})(x_{j_{1}}\otimes\cdots\otimes x_{j_{w}}).
$
\end{center}
This chain map is an isomorphism so it induces an isomorphism on homology, yielding
\begin{equation}\label{HHofRing1} HH_{n}^{(0)}(A) \cong 
    \begin{dcases}
        k  & n = 0 \\
        0 & \textrm{else}
    \end{dcases}
\end{equation}
 and for $w > 0$

\begin{equation}\label{HHofRing2} HH_{n}^{(w)}(A) \cong 
    \begin{dcases}
        \ker\big((1-t_{w-1}):\bar{\mathfrak{m}}^{\otimes w}\rightarrow \bar{\mathfrak{m}}^{\otimes w}\big)  & n=w \\
        \coker\big((1-t_{w-1}):\bar{\mathfrak{m}}^{\otimes w}\rightarrow \bar{\mathfrak{m}}^{\otimes w}\big) & n= w-1 \\
        0 & \textrm{else} \> . 
    \end{dcases}
\end{equation}
\subsection{Relating the reduced Hochschild complex to the Tsygan complex}\hfill

We now want to calculate $HC_{*}(A)$ for $A = k[x_1,x_2,\hdots,x_d]/\mathfrak{m}^2$. Since $b'_{n}$, $t_{n}$, and $N_{n}$ as well as $b_{n}$ preserve weight, the Tsygan complex and all pages of the spectral sequence and its homology split by weight as well, so $HC_{*}(A) \cong \displaystyle\bigoplus_{w=0}^{\infty}HC_{*}^{(w)}(A).$ The calculation of $HC_{*}^{(w)}(A)$  breaks down into two cases, one where the weight $w=0$ and the other where $w > 0$.
$\\*$\textbf{Case 1: $w = 0$}
$\\*$On the $E^{1}$-page, we have $E^{1}_{p,q}=0$ unless $q=0$ and $p\geq0$ is even, so $E^{1} \cong E^{\infty}$. Therefore

\begin{equation}\label{HCW0}
HC_{n}^{(0)}(A) \cong 
    \begin{dcases}
        k  & n \textrm{ even and } n \geq 0\\
        0 & \textrm{else}. \\
    \end{dcases}
\end{equation}
\textbf{Case 2: $w > 0$}
$\\*$ In the $E^{1}$-page, the even columns consist of $HH_{*}(A)$ and the odd columns are zero. Since the odd columns are zero, $\partial^{1}=0$ and $E^{1} \cong E^{2}$. By Equation (\ref{HHofRing2}), $E^{2}$ is given by:

\begin{equation}\label{double}
\begin{tikzcd}
   & \vdots & \vdots & \vdots & \vdots  &   \\
  (w+1)^{th} & 0 & 0 & 0 & 0  & \cdots \\
  (w)^{th} & \ker(1-t_{w-1}) & 0 & \ker(1-t_{w-1}) & 0  & \cdots \\
  (w-1)^{th} &  \textrm{coker}(1-t_{w-1}) & 0 & \textrm{coker}(1-t_{w-1})\arrow{ull}{\partial^{2}} & 0  &\arrow{ull}{\partial^{2}}  \cdots\\
   (w-2)^{nd} & 0 & 0 & 0 & 0  & \cdots \\
     & 0^{th} &1^{st}   &2^{nd}   &3^{rd}  & 
 \end{tikzcd}
\end{equation}
All $\partial^{r}$ for $r\geq 3$ are trivial for dimension reasons, so the $E^{3}$ page is same as the $E^{\infty}$ page. Therefore,
 \begin{equation}\label{HCNW}
HC_{n}^{(w)}(A) \cong 
    \begin{dcases}
        0  & n \leq w-2 \\
        \textrm{coker}\big((1-t_{w-1}):\bar{\mathfrak{m}}^{\otimes w}\rightarrow \bar{\mathfrak{m}}^{\otimes w}\big) & n = w-1 \\
        \textrm{coker}(\partial^{2}) & n = w + 2i,\hspace{11.5mm} i\geq 0\\
        \ker(\partial^{2}) & n = w + 1 + 2i, \hspace{5mm} i\geq 0\> . \\
    \end{dcases}
\end{equation}
As discussed above, $\partial^{2}$ is the map induced on the homology classes by $B = (1-t_{w})sN_{w-1}$.
\begin{center}
\begin{tikzcd}
 & (w)^{th}  & A^{\otimes w+1}  & A^{\otimes w+1} \arrow{l}{1-t_{w}} & \\
 & (w-1)^{th} &  & A^{\otimes w} \arrow{u}{s} & A^{\otimes w} \arrow{l}{N_{w-1}} \> \\
 \end{tikzcd}
\end{center}
$HH_{w-1}^{(w)}(A)$ is spanned by the homology classes of all cycles $x_{k_{1}}\otimes \cdots \otimes x_{k_{w}}$ for $k_{1},\hdots, k_{w} \in \{1, \hdots , d\}$, and on such a cycle, $(1-t_{w-1})sN_{w-1}(x_{k_{1}}\otimes \cdots \otimes x_{k_{w}})=sN_{w-1}(x_{k_{1}}\otimes \cdots \otimes x_{k_{w}})+$ degenerate elements. So if we reduce the range $A^{\otimes w+1} \rightarrow A\otimes \bar{A}^{\otimes w}$, we can regard the map as $sN_{w-1}$. If we further think of the reduced Hochschild complex of weight $w$ as  $\bar{\mathfrak{m}}^{\otimes w}\xrightarrow{1-t_{w-1}}\bar{\mathfrak{m}}^{\otimes w}$ as described in the previous section,  $\partial^{2}$ can be viewed as
\begin{center}
$N_{w-1}:\textrm{coker}(1-t_{w-1}) \xrightarrow{} \ker(1-t_{w-1})$
\end{center}
and we can rewrite Equation (\ref{HCNW}) as 
\begin{equation}\label{HCNW2}
    HC_{n}^{(w)}(A) \cong 
    \begin{dcases}
        0  & n \leq w-2 \\
        \textrm{coker}(1-t_{w-1}) & n = w-1 \\
        \textrm{coker}\big(\textrm{coker}(1-t_{w-1}) \xrightarrow{N_{w-1}} \ker(1-t_{w-1})\big) & n = w + 2i, \hspace{11.5mm} i\geq 0  \\
        \ker\big(\textrm{coker}(1-t_{w-1}) \xrightarrow{N_{w-1}} \ker(1-t_{w-1})\big) & n = w + 1 + 2i, \hspace{5mm} i\geq 0  \> .\\
    \end{dcases}
\end{equation}
\subsection{Comparison to Tor}\hfill

The calculation of the pieces of Equation (\ref{HCNW2}) is exactly what we get computing $Tor_{n}^{k[C_{w}]}(k,\bar{\mathfrak{m}}^{\otimes w})$ where $C_{w}= \langle \alpha : \alpha^{(w)}=1 \rangle$ is the cyclic group of order $w$ and $\bar{\mathfrak{m}}^{\otimes w}$ is a $k[C_{w}]$-module with the action $\alpha(x_{n_{1}}\otimes\cdots\otimes x_{n_{w}}) = t_{w-1}(x_{n_{1}}\otimes\cdots\otimes x_{n_{w}}) = (-1)^{w-1}(x_{n_{w}}\otimes x_{n_{1}} \otimes\cdots\otimes x_{n_{w-1}}).
$ For any commutative ring $k$, the following is a projective $k[C_{w}]$-resolution of $k$:
\begin{equation}\label{r}
  {\cdots}\xrightarrow{1+\alpha + {\cdots} +\alpha^{w-1}}k[C_{w}]\xrightarrow{\mathrel{\phantom{=}}1-\alpha\mathrel{\phantom{=}}}k[C_{w}]\xrightarrow{1+\alpha + {\cdots} + \alpha^{w-1}}k[C_{w}]\xrightarrow{\mathrel{\phantom{=}}1-\alpha\mathrel{\phantom{=}}}k[C_{w}]\xrightarrow{\mathrel{\phantom{=}}r\mathrel{\phantom{=}}} k\xrightarrow{}0 \> . 
\end{equation}
Here, $C_{w}= \langle \alpha : \alpha^{(w)}=1  \rangle$ and $k[C_{w}]\xrightarrow{\mathrel{\phantom{=}}r\mathrel{\phantom{=}}} k$ is the augmentation $r\Bigg(\sum\limits_{j=0}^{w-1} k_{j}\cdot  \alpha^{j} \Bigg) =\sum\limits_{j=0}^{w-1}k_{j}  $ .

After deleting $k$, tensoring over $k[C_{w}]$ with $\bar{\mathfrak{m}}^{\otimes w}$, and setting $N = (1+\alpha + {\cdots} + \alpha^{w-1})$ we get
\begin{equation}\label{complexmbar}
\cdots\xrightarrow{1-t_{w-1}} \bar{\mathfrak{m}}^{\otimes w}\xrightarrow{N_{w-1}} \bar{\mathfrak{m}}^{\otimes w}\xrightarrow{1-t_{w-1}} \bar{\mathfrak{m}}^{\otimes w}\xrightarrow{N_{w-1}} \bar{\mathfrak{m}}^{\otimes w}\xrightarrow{1-t_{w-1}}\bar{\mathfrak{m}}^{\otimes w}\xrightarrow{} 0
\end{equation}
yielding
\begin{equation}\label{tor0}
Tor_{0}^{k[C_{w}]}(k,\bar{\mathfrak{m}}^{\otimes w})\cong  \textrm{coker}(1-t_{w-1})
\end{equation}
\begin{equation}\label{tor1}
Tor_{1 + 2i}^{k[C_{w}]}(k,\bar{\mathfrak{m}}^{\otimes w})\cong\textrm{coker}\big(\textrm{coker}(1-t_{w-1}) \xrightarrow{N_{w-1}} \ker(1-t_{w-1})\big) 
\end{equation}
\begin{equation}\label{tor2}
Tor_{2+ 2i}^{k[C_{w}]}(k,\bar{\mathfrak{m}}^{\otimes w})\cong\ker\big(\textrm{coker}(1-t_{w-1}) \xrightarrow{N_{w-1}} \ker(1-t_{w-1})\big) .
\end{equation}
So we can rewrite Equation (\ref{HCNW2}) as
\begin{theorem}\label{HCNW3}
Let $A = k[x_1,x_2,\hdots,x_d]/\mathfrak{m}^2$, where $k$ is any commutative unital ring, $\mathfrak{m}$ is the ideal $(x_1,x_2,\hdots,x_d)$, $\bar{\mathfrak{m}} = \mathfrak{m} / \mathfrak{m}^{2}$, $C_{w}= \langle \alpha : \alpha^{(w)}=1  \rangle$, and $\bar{\mathfrak{m}}^{\otimes w}$ is a $k[C_{w}]$-module with the action:
$$
\alpha(x_{n_{1}}\otimes\cdots\otimes x_{n_{w}}) = (-1)^{w-1}(x_{n_{w}}\otimes x_{n_{1}} \otimes\cdots\otimes x_{n_{w-1}}) \> .
$$
Then for $w = 0$
\begin{center}
$
HC_{n}^{(0)}(A) \cong 
    \begin{dcases}
        k  & n \textrm{ even and } n \geq 0 \\
        0 & \textrm{else} \\
    \end{dcases}
$
\end{center}
and for $w > 0$
\begin{center}
$
HC_{n}^{(w)}(A) \cong 
    \begin{dcases} 
        0  & n \leq w-2 \\
       Tor_{n-w+1}^{k[C_{w}]}(k,\bar{\mathfrak{m}}^{\otimes w}) & n \geq w-1\>  \\
    \end{dcases}
$
\end{center}

$\\*$ which can be rewritten as 

\begin{center}
$
HC_{n}^{(w)}(A) \cong 
    \begin{dcases} 
        0  & n \leq w-2 \\
       H_{n-w+1}(C_{w};\bar{\mathfrak{m}}^{\otimes w}) & n \geq w-1\>.  \\
    \end{dcases}
$
\end{center}
\end{theorem}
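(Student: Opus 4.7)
The plan is to recognize that essentially all the work has been done in the two preceding subsections, so the proof amounts to assembling Equation (\ref{HCW0}), Equation (\ref{HCNW2}), and the Tor computations (\ref{tor0})--(\ref{tor2}) into a single uniform statement. First I would dispose of the $w=0$ case by citing Equation (\ref{HCW0}) directly: the weight-zero subcomplex lives entirely in the $0^{\mathrm{th}}$ row with value $k$, so the spectral sequence collapses at $E^1$ and the answer reads off immediately.

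For $w>0$ I would start from Equation (\ref{HCNW2}), which expresses $HC_n^{(w)}(A)$ in three regimes: $0$ for $n\le w-2$, $\coker(1-t_{w-1})$ in degree $n=w-1$, and alternating $\coker$/$\ker$ of $N_{w-1}\colon\coker(1-t_{w-1})\to\ker(1-t_{w-1})$ in degrees $n\ge w$. Then I would invoke the explicit projective $k[C_w]$-resolution (\ref{r}) of $k$ and tensor with $\bar{\mathfrak{m}}^{\otimes w}$ to get the complex (\ref{complexmbar}); its homology is computed in (\ref{tor0})--(\ref{tor2}) and gives exactly the same three regimes of cokernels and kernels, now indexed homologically. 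Matching indices $n\mapsto n-w+1$ turns the three-case description into the single formula $HC_n^{(w)}(A)\cong \Tor_{n-w+1}^{k[C_w]}(k,\bar{\mathfrak{m}}^{\otimes w})$ for $n\ge w-1$.

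To pass from Tor to group homology, I would simply note that by definition $H_j(C_w;\bar{\mathfrak{m}}^{\otimes w})=\Tor_j^{k[C_w]}(k,\bar{\mathfrak{m}}^{\otimes w})$, where the $C_w$-action on $\bar{\mathfrak{m}}^{\otimes w}$ is the one prescribed in the statement (generator acting by $t_{w-1}$). This rewriting is formal once the Tor identification has been made.

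The only place requiring a little care is bookkeeping of the index shift and the matching of parities: I would check that $n=w-1$ corresponds to $\Tor_0$ (giving $\coker(1-t_{w-1})$, consistent with the middle line of (\ref{HCNW2})), that even values $n=w+2i$ correspond to $\Tor_{2i+1}$ (odd, hence a cokernel in (\ref{complexmbar})), and that odd-parity values $n=w+1+2i$ correspond to $\Tor_{2i+2}$ (even, hence a kernel). I expect this alignment of parities to be the only nontrivial step; everything else is a direct citation of the identifications established in Sections 3.1--3.3.
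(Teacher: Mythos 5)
Your proposal is correct and follows essentially the same route as the paper: the paper's argument for Theorem \ref{HCNW3} is precisely the assembly of Equation (\ref{HCW0}), Equation (\ref{HCNW2}), the resolution (\ref{r}) tensored down to (\ref{complexmbar}), and the identifications (\ref{tor0})--(\ref{tor2}), with the same index shift $n\mapsto n-w+1$ and the same formal rewriting of $\Tor^{k[C_w]}$ as group homology. Your parity check (degree $w-1$ gives $\Tor_0=\coker(1-t_{w-1})$, degrees $n=w+2i$ give the odd $\Tor$'s which are cokernels of $N_{w-1}$, degrees $n=w+1+2i$ give the even $\Tor$'s which are kernels) matches the paper exactly, apart from your slightly loose phrase ``even values $n=w+2i$,'' which should read ``values $n\equiv w\pmod 2$.''
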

\section{Cyclic Homology of $\mathbb{Q}[x_1,x_2,\hdots,x_d]/\mathfrak{m}^2$}

Since $\mathbb{Q}$ is projective over $\mathbb{Q}[C_{w}]$, $Tor_{n}^{\mathbb{Q}[C_{w}]}(\mathbb{Q},\bar{\mathfrak{m}}^{\otimes w}) = 0$ for $n >0$. Therefore, we get that for $w = 0$

\begin{center}
$
HC_{n}^{(0)}(A) \cong 
    \begin{dcases}
        \mathbb{Q}  & n \textrm{ even and } n \geq 0  \\
        0 & \textrm{else} \\
    \end{dcases}
$
\end{center}
$\\*$ and for $w > 0$

\begin{center}
$
HC_{n}^{(w)}(A) \cong 
    \begin{dcases} 
       \bar{\mathfrak{m}}^{\otimes w} / (x_{n_{1}}\otimes {\cdots} \otimes x_{n_{w}} \sim (-1)^{w-1}x_{n_{w}}\otimes x_{n_{1}}\otimes {\cdots} \otimes x_{n_{w-1}}) & n = w-1 \\
      \quad \quad \quad \forall n_{1}, \hdots, n_{w} \in \{1,\hdots, d\} \\
        0  & \textrm{else}.\> 
    \end{dcases}
$
\end{center}

We later define the cycle length of a tensor monomial to be the smallest $m>0$ such that if you rotate the last $m$ coordinates of the monomial to the front, your monomial doesn't change. We can rewrite the tensor monomial as $x_{n_{1}}\otimes {\cdots} \otimes x_{n_{w}} = (x_{k_{1}}\otimes\cdots\otimes x_{k_{m}})^{\otimes \ell}$. Notice that when $w$ is odd, $(x_{n_{1}}\otimes {\cdots} \otimes x_{n_{w}} \sim (-1)^{w-1}x_{n_{w}}\otimes x_{n_{1}}\otimes {\cdots} \otimes x_{n_{w-1}})$ gives $$
(x_{k_{1}}\otimes\cdots\otimes x_{k_{m}})^{\otimes \ell}
\sim
(x_{k_{m}}\otimes\cdots\otimes x_{k_{m-1}})^{\otimes \ell}
\sim
\cdots
\sim
(x_{k_{3}}\otimes\cdots\otimes x_{k_{2}})^{\otimes \ell}
\sim
(x_{k_{2}}\otimes\cdots\otimes x_{k_{1}})^{\otimes \ell},
$$
when $w$ is even and $m$ is even, $(x_{n_{1}}\otimes {\cdots} \otimes x_{n_{w}} \sim (-1)^{w-1}x_{n_{w}}\otimes x_{n_{1}}\otimes {\cdots} \otimes x_{n_{w-1}})$ gives $$
(x_{k_{1}}\otimes\cdots\otimes x_{k_{m}})^{\otimes \ell}
\sim
-(x_{k_{m}}\otimes\cdots\otimes x_{k_{m-1}})^{\otimes \ell}
\sim
\cdots
\sim
-(x_{k_{2}}\otimes\cdots\otimes x_{k_{1}})^{\otimes \ell}
\sim
(x_{k_{1}}\otimes\cdots\otimes x_{k_{m}})^{\otimes \ell},
$$
but when $w$ is even and $m$ is odd, $(x_{n_{1}}\otimes {\cdots} \otimes x_{n_{w}} \sim (-1)^{w-1}x_{n_{w}}\otimes x_{n_{1}}\otimes {\cdots} \otimes x_{n_{w-1}})$ gives
$$
(x_{k_{1}}\otimes\cdots\otimes x_{k_{m}})^{\otimes \ell}
\sim
-(x_{k_{m}}\otimes\cdots\otimes x_{k_{m-1}})^{\otimes \ell}
\sim
\cdots
\sim
(x_{k_{2}}\otimes\cdots\otimes x_{k_{1}})^{\otimes \ell}
\sim
-(x_{k_{1}}\otimes\cdots\otimes x_{k_{m}})^{\otimes \ell}.
$$

This tells us that if $\omega_{m,d}$ is the set of all cycle families of words of length $m$ and cycle length $m$ in $x_1,\hdots,x_d$,       $$\bar{\mathfrak{m}}^{\otimes w} / (x_{n_{1}}\otimes {\cdots} \otimes x_{n_{w}} \sim (-1)^{w-1}x_{n_{w}}\otimes x_{n_{1}}\otimes {\cdots} \otimes x_{n_{w-1}}) \cong
$$
$$
\Bigg(\displaystyle\bigoplus_{\substack{m \mid w \\ m \equiv w \Mod{2}}}\displaystyle\bigoplus_{\omega_{m,d}} \mathbb{Q}\Bigg) \bigoplus \Bigg(        \displaystyle\bigoplus_{\substack{m \mid w \\ m \not\equiv w \Mod{2}}}\displaystyle\bigoplus_{\omega_{m,d}} \mathbb{Q}[x]/[x\sim-x]\Bigg) \cong \displaystyle\bigoplus_{\substack{m \mid w \\ m \equiv w \Mod{2}}}\displaystyle\bigoplus_{\omega_{m,d}} \mathbb{Q}. $$
The number of elements in each $\omega_{m,d}$ is
$$
\hspace{5mm} \frac{\sum_{i \vert m}\mu(m/i) d^i}{m}, $$
where $\mu$ is the M\"obius function defined as
$$\mu(n) = \begin{dcases}
1 & \textrm{ when $n$ is a square-free positive integer with an even number of prime factors} \\
-1 & \textrm{ when $n$ is a square-free positive integer with an odd number of prime factors} \\
0 & \textrm{ when $n$ has a squared prime factor} \\
\end{dcases}$$
since we need to count all the $d^{m}$ words of length $m$ in $x_{1}, \ldots, x_{d}$, but subtract $d^{m/p}$ for words which are repeats of length $m/p$ for $p|m$, correct by adding $d^{m/pq}$ for all words which are repeats of words of length $m/pq$ for $p,q$ distinct primes dividing $m$, etc. 

\begin{corollary}
Let $A = \mathbb{Q}[x_1,...,x_d]/\mathfrak{m}^2$ where $\mathfrak{m}$ is the ideal $(x_1,\hdots,x_d)$. Then for $w = 0$
\begin{center}
$
HC_{n}^{(0)}(A) \cong 
    \begin{dcases}
        \mathbb{Q}  & n \textrm{ even and } n \geq 0  \\
        0 & \textrm{else} \\
    \end{dcases}
$
\end{center}
$\\*$ and for $w > 0$

\begin{center}
$
HC_{n}^{(w)}(A) \cong 
    \begin{dcases} 
       \displaystyle\bigoplus_{\substack{m \mid w \\ m \equiv w \Mod{2}}}\displaystyle\bigoplus_{\omega_{m,d}} \mathbb{Q}& n = w-1 \\
        0  & \textrm{else}.\> 
    \end{dcases}
$
\end{center}
\end{corollary}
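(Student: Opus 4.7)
The plan is to deduce the corollary directly from Theorem \ref{HCNW3} by using that $\mathbb{Q}[C_w]$ is semisimple, and then to evaluate the single surviving group of coinvariants using the cycle-length analysis already carried out in the paragraphs immediately preceding the statement.

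First I would invoke Maschke's theorem: since $\mathbb{Q}$ has characteristic $0$, the group algebra $\mathbb{Q}[C_w]$ is semisimple, so every $\mathbb{Q}[C_w]$-module is projective. In particular the trivial module $\mathbb{Q}$ is projective, hence $\Tor_n^{\mathbb{Q}[C_w]}(\mathbb{Q},\bar{\mathfrak{m}}^{\otimes w})=0$ for all $n>0$, equivalently $H_n(C_w;\bar{\mathfrak{m}}^{\otimes w})=0$ for $n>0$. Feeding this into Theorem \ref{HCNW3} immediately kills every $HC_n^{(w)}(A)$ except at $n=w-1$, where it identifies $HC_{w-1}^{(w)}(A)$ with the coinvariants $H_0(C_w;\bar{\mathfrak{m}}^{\otimes w})=\bar{\mathfrak{m}}^{\otimes w}/(1-t_{w-1})$. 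The $w=0$ case is copied directly from Theorem \ref{HCNW3}.

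Next I would evaluate the coinvariants combinatorially. A $\mathbb{Q}$-basis of $\bar{\mathfrak{m}}^{\otimes w}$ is given by the tensor monomials $x_{n_1}\otimes\cdots\otimes x_{n_w}$, and these split into $C_w$-orbits. Partition the basis by cycle length $m\mid w$; within each orbit the relation $v\sim(-1)^{w-1}\alpha(v)$ is iterated $m$ times on a primitive generator $(x_{k_1}\otimes\cdots\otimes x_{k_m})^{\otimes\ell}$, giving the three cases laid out in the discussion above the corollary. The outcome is that an orbit of cycle length $m$ contributes a single copy of $\mathbb{Q}$ when $m\equiv w\pmod 2$, and is killed (because $2$ is invertible in $\mathbb{Q}$) when $m\not\equiv w\pmod 2$. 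This produces the direct sum indexed by $\omega_{m,d}$ with $m\mid w$ and $m\equiv w\pmod 2$.

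The only remaining piece is to verify that the number of orbits of exact cycle length $m$ really is $\frac{1}{m}\sum_{i\mid m}\mu(m/i)d^i$, which is the standard M\"obius-inversion count of primitive necklaces on $d$ letters and is already given in the text. The main obstacle, such as it is, is keeping the parity bookkeeping honest in the middle step: one must check that rotating by $m$ positions inside an orbit of cycle length $m$ acts as the identity on the underlying tensor, so that the scalar $(-1)^{(w-1)m}$ accumulated from $m$ applications of the twisted rotation is what forces the distinction between $m\equiv w$ and $m\not\equiv w\pmod 2$; once this sign computation is checked, the corollary follows.
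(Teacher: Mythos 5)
Your proposal is correct and follows essentially the same route as the paper: the vanishing of higher $\Tor$ over the semisimple algebra $\mathbb{Q}[C_w]$ reduces everything to the coinvariants in degree $w-1$, which are then evaluated orbit-by-orbit using the cycle-length/parity analysis and the M\"obius count of primitive necklaces, exactly as in the text preceding the corollary. The only nitpick is notational: with the paper's convention $\alpha = t_{w-1} = (-1)^{w-1}T_{w-1}$, the coinvariant relation is $v \sim \alpha(v)$, i.e.\ $v \sim (-1)^{w-1}T_{w-1}(v)$, so be careful not to write the sign twice.
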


\section{Cyclic Homology of $\mathbb{Z}[x_1,x_2,\hdots,x_d]/\mathfrak{m}^2$}

Now we use Equation (\ref{HCNW2}) to compute $HC_{n}^{(w)}(A)$ for $k = \mathbb{Z}$. Let $\bar{\mathfrak{m}}_{\mathbb{Q}} = \mathfrak{m} / \mathfrak{m}^{2}$ when $A = \mathbb{Q}[x_1,...,x_d]/\mathfrak{m}^2$ and $\mathfrak{m}$ is the ideal $(x_1,\hdots,x_d)$. Let $\bar{\mathfrak{m}}_{\mathbb{Z}} = \mathfrak{m} / \mathfrak{m}^{2}$ when $A = \mathbb{Z}[x_1,\hdots,x_d]/\mathfrak{m}^2$ and $\mathfrak{m}$ is the ideal $(x_1,\hdots,x_d)$. So $\bar{\mathfrak{m}}_{\mathbb{Q}}$ is the free $\mathbb{Q}$-vector space on $[x_{1}]$, $[x_{2}],\hdots,[x_{d}]$ which by abuse of notation we call $x_{1}$, $x_{2},\hdots , x_{d}$ and $\bar{\mathfrak{m}}_{\mathbb{Z}}$ is the free $\mathbb{Z}$-module on these generators. Note that because $\alpha$ acts as $t$, the fact that $\mathbb{Q}$ is projective over $\mathbb{Q}[C_{w}]$ and thus has no higher Tor gives us
\begin{equation}\label{hadtoremake1}
\ker\big((1-t_{w-1}):\bar{\mathfrak{m}}_{\mathbb{Q}}^{\otimes w}\rightarrow \bar{\mathfrak{m}}_{\mathbb{Q}}^{\otimes w}\big) \cong \im(N:\bar{\mathfrak{m}}_{\mathbb{Q}}^{\otimes w}\rightarrow \bar{\mathfrak{m}}_{\mathbb{Q}}^{\otimes w})
\end{equation}
\begin{equation}\label{hadtoremake2}
\im\big((1-t_{w-1}):\bar{\mathfrak{m}}_{\mathbb{Q}}^{\otimes w}\rightarrow \bar{\mathfrak{m}}_{\mathbb{Q}}^{\otimes w}\big) \cong \ker(N:\bar{\mathfrak{m}}_{\mathbb{Q}}^{\otimes w}\rightarrow \bar{\mathfrak{m}}_{\mathbb{Q}}^{\otimes w}) \> .
\end{equation}
By Equation (\ref{complexmbar}), $Tor_{n}^{\mathbb{Z}[C_{w}]}(\mathbb{Z},\bar{\mathfrak{m}}^{\otimes w})$ is the homology of the complex
\begin{equation}\label{TORHomComplex}    
\cdots\xrightarrow{1-t_{w-1}} \bar{\mathfrak{m}}_{\mathbb{Z}}^{\otimes w}\xrightarrow{N_{w-1}} \bar{\mathfrak{m}}_{\mathbb{Z}}^{\otimes w}\xrightarrow{1-t_{w-1}} \bar{\mathfrak{m}}_{\mathbb{Z}}^{\otimes w}\xrightarrow{N_{w-1}} \bar{\mathfrak{m}}_{\mathbb{Z}}^{\otimes w}\xrightarrow{1-t_{w-1}}\bar{\mathfrak{m}}_{\mathbb{Z}}^{\otimes w}\xrightarrow{} 0 \>.
\end{equation}
Therefore, from Theorem \ref{HCNW3}
\begin{equation}\label{ccz1}
HC_{n}^{(0)}(\mathbb{Z}[x_1,x_2,\hdots,x_d]/\mathfrak{m}^2) \cong 
    \begin{dcases}
        \mathbb{Z} &  n \textrm{ even and } n \geq 0  \\
        0 & n \textrm{ else} \\
    \end{dcases}
\end{equation}
and for $w > 0$

\begin{equation}\label{ccz2}
HC_{n}^{(w)}(\mathbb{Z}[x_1,x_2,\hdots,x_d]/\mathfrak{m}^2) 
\cong 
    \begin{dcases} 
       \>\>\>\>\>\>\>\>\>\>\>\>\>\>\>\>\>\>\>\>\>\>\>\>\> 0  & n \leq w-2 \\
       \\
       \frac{\bar{\mathfrak{m}}_{\mathbb{Z}}^{\otimes w}}{\im\big((1-t_{w-1}):\bar{\mathfrak{m}}_{\mathbb{Z}}^{\otimes w}\rightarrow \bar{\mathfrak{m}}_{\mathbb{Z}}^{\otimes w}\big)} & n = w-1 \\
       \\
        \frac{\ker\big((1-t_{w-1}):\bar{\mathfrak{m}}_{\mathbb{Z}}^{\otimes w}\rightarrow \bar{\mathfrak{m}}_{\mathbb{Z}}^{\otimes w}\big)}{\im(N_{w-1}:\bar{\mathfrak{m}}_{\mathbb{Z}}^{\otimes w}\rightarrow \bar{\mathfrak{m}}_{\mathbb{Z}}^{\otimes w})} & n = w + 2i, \hspace{11.5mm} i\geq0 \\
        \\
       \frac{\ker{(N_{w-1}:\bar{\mathfrak{m}}_{\mathbb{Z}}^{\otimes w}\rightarrow \bar{\mathfrak{m}}_{\mathbb{Z}}^{\otimes w}})}{\im\big((1-t_{w-1}):\bar{\mathfrak{m}}_{\mathbb{Z}}^{\otimes w}\rightarrow \bar{\mathfrak{m}}_{\mathbb{Z}}^{\otimes w}\big)} & n = w + 1 + 2i, \hspace{5mm} i\geq0 \> . \\
    \end{dcases}
\end{equation}
Equations (\ref{hadtoremake1}) and (\ref{hadtoremake2}) gives us 
\begin{center}
$\ker\big((1-t_{w-1}): \bar{\mathfrak{m}}_{\mathbb{Z}}^{\otimes w}\rightarrow \bar{\mathfrak{m}}_{\mathbb{Z}}^{\otimes w}\big) \cong \big(\ker\big((1-t_{w-1}): \bar{\mathfrak{m}}_{\mathbb{Q}}^{\otimes w}\rightarrow \bar{\mathfrak{m}}_{\mathbb{Q}}^{\otimes w}\big)\big) \cap \bar{\mathfrak{m}}_{\mathbb{Z}}^{\otimes w}, $ 

\medskip

$\ker(N_{w-1}: \bar{\mathfrak{m}}_{\mathbb{Z}}^{\otimes w}\rightarrow \bar{\mathfrak{m}}_{\mathbb{Z}}^{\otimes w}) \cong \big(\ker(N_{w-1}: \bar{\mathfrak{m}}_{\mathbb{Q}}^{\otimes w}\rightarrow \bar{\mathfrak{m}}_{\mathbb{Q}}^{\otimes w})\big) \cap \bar{\mathfrak{m}}_{\mathbb{Z}}^{\otimes w}. $
\end{center}
$\\*$So we can deduce
\begin{proposition}\label{applykertozandq}

Let $A = \mathbb{Z}[x_1,x_2,\hdots,x_d]/\mathfrak{m}^2$ where $\mathfrak{m}$ is the ideal $(x_1,x_2,\hdots,x_d)$ and $\bar{\mathfrak{m}} = \mathfrak{m} / \mathfrak{m}^{2}$.
Then

\begin{center}
$
HC_{n}^{(0)}(A) \cong 
    \begin{dcases}
        \mathbb{Z} & n \textrm{ even and } n \geq 0 \\
        0 & \textrm{else}\\
    \end{dcases}
$
\end{center}
and for $w > 0$

\begin{center}
$
HC_{n}^{(w)}(A) \cong 
    \begin{dcases} 
       \>\>\>\>\>\>\>\>\>\>\>\>\>\>\>\>\>\>\>\>\>\>\>\>\> 0  & n \leq w-2 \\
       \\
       \frac{\bar{\mathfrak{m}}_{\mathbb{Z}}^{\otimes w}}{\im\big((1-t_{w-1}):\bar{\mathfrak{m}}_{\mathbb{Z}}^{\otimes w}\rightarrow \bar{\mathfrak{m}}_{\mathbb{Z}}^{\otimes w}\big)} & n = w-1 \\
       \\
        \frac{\big(\im(N_{w-1}: \bar{\mathfrak{m}}_{\mathbb{Q}}^{\otimes w}\rightarrow \bar{\mathfrak{m}}_{\mathbb{Q}}^{\otimes w})\big) \cap \bar{\mathfrak{m}}_{\mathbb{Z}}^{\otimes w}}{\im(N_{w-1}:\bar{\mathfrak{m}}_{\mathbb{Z}}^{\otimes w}\rightarrow \bar{\mathfrak{m}}_{\mathbb{Z}}^{\otimes w})} & n = w + 2i, \hspace{11.5mm}i\geq0  \\
        \\
       \frac{\big(\im\big((1-t_{w-1}): \bar{\mathfrak{m}}_{\mathbb{Q}}^{\otimes w}\rightarrow \bar{\mathfrak{m}}_{\mathbb{Q}}^{\otimes w}\big)\big) \cap \bar{\mathfrak{m}}_{\mathbb{Z}}^{\otimes w}}{\im\big((1-t_{w-1}):\bar{\mathfrak{m}}_{\mathbb{Z}}^{\otimes w}\rightarrow \bar{\mathfrak{m}}_{\mathbb{Z}}^{\otimes w}\big)} & n = w + 1 + 2i, \hspace{5mm}i\geq0\> . \\
    \end{dcases}
$
\end{center}
\end{proposition}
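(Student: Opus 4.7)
The plan is to bootstrap directly from Equation (\ref{ccz2}), which already delivers three of the four cases (for $w = 0$, for $n \le w-2$, and for $n = w-1$) with exactly the right-hand sides stated in the proposition. Only the two cases $n = w + 2i$ and $n = w + 1 + 2i$ require rewriting.

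For these, I would proceed in two short steps. First, verify the intersection identities displayed just before the proposition, namely
$$\ker\bigl((1-t_{w-1}):\bar{\mathfrak{m}}_{\mathbb{Z}}^{\otimes w}\to\bar{\mathfrak{m}}_{\mathbb{Z}}^{\otimes w}\bigr) \cong \ker\bigl((1-t_{w-1}):\bar{\mathfrak{m}}_{\mathbb{Q}}^{\otimes w}\to\bar{\mathfrak{m}}_{\mathbb{Q}}^{\otimes w}\bigr) \cap \bar{\mathfrak{m}}_{\mathbb{Z}}^{\otimes w}$$
and the analogous statement for $N_{w-1}$. The key point is that $\bar{\mathfrak{m}}_{\mathbb{Q}}^{\otimes w}$ is canonically $\mathbb{Q}\otimes_{\mathbb{Z}}\bar{\mathfrak{m}}_{\mathbb{Z}}^{\otimes w}$ (both are free on the same tensor monomials $x_{j_1}\otimes\cdots\otimes x_{j_w}$), so $\bar{\mathfrak{m}}_{\mathbb{Z}}^{\otimes w}$ embeds in $\bar{\mathfrak{m}}_{\mathbb{Q}}^{\otimes w}$ as a full lattice. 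Since $1-t_{w-1}$ and $N_{w-1}$ are $\mathbb{Z}$-linear and this inclusion intertwines the cyclic actions, a $\mathbb{Z}$-linear combination of monomials is killed over $\mathbb{Z}$ if and only if it is killed over $\mathbb{Q}$. Equivalently, flatness of $\mathbb{Q}$ over $\mathbb{Z}$ gives $\ker(f)\otimes_{\mathbb{Z}}\mathbb{Q} = \ker(f\otimes_{\mathbb{Z}}\mathbb{Q})$ for any $\mathbb{Z}$-linear $f$, and torsion-freeness of $\bar{\mathfrak{m}}_{\mathbb{Z}}^{\otimes w}$ lets us identify it with its image in the rationalization.

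Second, with the intersection identities in hand, I would use Equations (\ref{hadtoremake1}) and (\ref{hadtoremake2}) to swap each $\mathbb{Q}$-kernel for the corresponding $\mathbb{Q}$-image. This turns $\ker\bigl((1-t_{w-1}):\bar{\mathfrak{m}}_{\mathbb{Q}}^{\otimes w}\to \bar{\mathfrak{m}}_{\mathbb{Q}}^{\otimes w}\bigr) \cap \bar{\mathfrak{m}}_{\mathbb{Z}}^{\otimes w}$ into $\im\bigl(N_{w-1}:\bar{\mathfrak{m}}_{\mathbb{Q}}^{\otimes w}\to\bar{\mathfrak{m}}_{\mathbb{Q}}^{\otimes w}\bigr) \cap \bar{\mathfrak{m}}_{\mathbb{Z}}^{\otimes w}$, giving the formula for $n = w+2i$, and it turns $\ker(N_{w-1})_{\mathbb{Q}} \cap \bar{\mathfrak{m}}_{\mathbb{Z}}^{\otimes w}$ into $\im\bigl((1-t_{w-1}):\bar{\mathfrak{m}}_{\mathbb{Q}}^{\otimes w}\to\bar{\mathfrak{m}}_{\mathbb{Q}}^{\otimes w}\bigr) \cap \bar{\mathfrak{m}}_{\mathbb{Z}}^{\otimes w}$, giving the formula for $n = w+1+2i$. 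The denominators in Equation (\ref{ccz2}) already have the exact form displayed in the proposition, so nothing further is needed there.

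The main (and essentially only) obstacle is a clean justification of the intersection identity for kernels; once one observes that $\bar{\mathfrak{m}}_{\mathbb{Z}}^{\otimes w}$ is torsion-free and the operators in question are defined over $\mathbb{Z}$, this reduces to a one-line flatness argument. Everything else in the proof is a direct substitution into Equation (\ref{ccz2}).
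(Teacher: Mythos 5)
Your proposal is correct and follows exactly the route the paper takes: start from Equation (\ref{ccz2}), identify the $\mathbb{Z}$-kernels of $1-t_{w-1}$ and $N_{w-1}$ with the intersections of the corresponding $\mathbb{Q}$-kernels with the lattice $\bar{\mathfrak{m}}_{\mathbb{Z}}^{\otimes w}$ (which holds since the lattice is torsion-free and the operators are defined over $\mathbb{Z}$), and then use Equations (\ref{hadtoremake1}) and (\ref{hadtoremake2}) to replace those $\mathbb{Q}$-kernels by the $\mathbb{Q}$-images of $N_{w-1}$ and $1-t_{w-1}$ respectively. The paper leaves the intersection identities as displayed assertions rather than spelling out the flatness argument, but the substance is identical.
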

$\\*$The next three subsections will explicitly compute the pieces of Proposition \ref{applykertozandq}.





\subsection{Calculating $HC_{w+2i}^{(w)}$ for $i\geq0$}\hfill

From Proposition \ref{applykertozandq}, we know
$$
HC_{w+2i}^{(w)}(\mathbb{Z}[x_1,x_2,\hdots,x_d]/\mathfrak{m}^2)\cong \frac{\big(\im(N_{w-1}: \bar{\mathfrak{m}}_{\mathbb{Q}}^{\otimes w}\rightarrow \bar{\mathfrak{m}}_{\mathbb{Q}}^{\otimes w})\big) \cap \bar{\mathfrak{m}}_{\mathbb{Z}}^{\otimes w}}{\im(N_{w-1}:\bar{\mathfrak{m}}_{\mathbb{Z}}^{\otimes w}\rightarrow \bar{\mathfrak{m}}_{\mathbb{Z}}^{\otimes w})}.
$$
The module $\bar{\mathfrak{m}}_{\mathbb{Z}}^{\otimes w}$ is freely spanned over $\mathbb{Z}$ by elements $x_{j_{1}}\otimes\cdots\otimes x_{j_{w}}$ where $j_{i} \in \{1,\hdots,d\}$ for $i\in \{1,\hdots,w\}$. Define a new map
$T_{w-1}(x_{j_{1}}\otimes x_{j_{2}}\otimes\cdots\otimes x_{j_{w}}) = x_{j_{w}}\otimes x_{j_{1}}\otimes\cdots\otimes x_{j_{w-1}}.$
$\\*$Note that
\begin{equation}\label{tnandTn}
t_{n}(a_{0}\otimes a_{1}\otimes \cdots\otimes a_{n}) = (-1)^{n}(a_{n}\otimes a_{0}\otimes a_{1} \otimes \cdots \otimes a_{n-1}) = (-1)^{n}T_{n}(a_{0}\otimes a_{1}\otimes \cdots\otimes a_{n})
\end{equation} 
\begin{definition}\label{cl}
Define the \textbf{cycle length} of a tensor monomial $x_{j_{1}}\otimes\cdots\otimes x_{j_{w}}$ to be the smallest $0<m$ such that $T_{w-1}^{m}(x_{j_{1}}\otimes\cdots\otimes x_{j_{w}}) = x_{j_{1}}\otimes\cdots\otimes x_{j_{w}}$.
\end{definition}


\begin{definition}
Define the \textbf{cycle family} of a tensor monomial $x_{j_{1}}\otimes\cdots\otimes x_{j_{w}}$ to be the set of all the $T_{w-1}^{n}(x_{j_{1}}\otimes\cdots\otimes x_{j_{w}})$ for  $n \in \mathbb{N}$.
\end{definition}
All tensor monomials in a cycle family have the same cycle length. Also, $t_{w-1}$ and $N_{w-1}$ send elements of a cycle family to sums of elements of the same cycle family. Therefore, we can break the calculation down  by cycle families. 
\begin{lemma}\label{paritydifferent}
Consider a tensor monomial $x_{j_{1}}\otimes\cdots\otimes x_{j_{w}}$ of cycle length $m$ such that $m$ is not of the same parity as $w$. Then $N_{w-1}(x_{j_{1}}\otimes\cdots\otimes x_{j_{w}}) = 0$.
\end{lemma}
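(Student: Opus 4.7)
The plan is to expand $N_{w-1}$ using the identity $t_{w-1} = (-1)^{w-1}T_{w-1}$ recorded in equation (\ref{tnandTn}), and then exploit the cycle length hypothesis to split the resulting sum into $\ell = w/m$ identical blocks whose coefficients telescope to zero.

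First I would observe that since $T_{w-1}^{w}$ acts as the identity on all tensor monomials in $\bar{\mathfrak{m}}^{\otimes w}$ and the orbit of $x_{j_{1}}\otimes\cdots\otimes x_{j_{w}}$ under $T_{w-1}$ has size $m$, the cycle length $m$ must divide $w$; write $w = m\ell$. Then by (\ref{tnandTn}),
\[
N_{w-1}(x_{j_{1}}\otimes\cdots\otimes x_{j_{w}}) = \sum_{k=0}^{w-1}(-1)^{k(w-1)}\,T_{w-1}^{k}(x_{j_{1}}\otimes\cdots\otimes x_{j_{w}}).
\]
Writing $k=qm+r$ with $0\le q<\ell$ and $0\le r<m$ and using $T_{w-1}^{m}(x_{j_{1}}\otimes\cdots\otimes x_{j_{w}})=x_{j_{1}}\otimes\cdots\otimes x_{j_{w}}$, this becomes
\[
\Bigl(\sum_{q=0}^{\ell-1}(-1)^{qm(w-1)}\Bigr)\Bigl(\sum_{r=0}^{m-1}(-1)^{r(w-1)}\,T_{w-1}^{r}(x_{j_{1}}\otimes\cdots\otimes x_{j_{w}})\Bigr),
\]
so it suffices to show the first factor vanishes.

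Next I would dispose of the parity arithmetic. Since $m\mid w$, the case $m$ even, $w$ odd cannot occur; the parities of $m$ and $w$ can disagree only when $m$ is odd and $w$ is even. In that situation $w-1$ is odd and $m(w-1)$ is odd, so $(-1)^{m(w-1)}=-1$; meanwhile $\ell=w/m$ is a quotient of an even number by an odd number, hence even. Therefore
\[
\sum_{q=0}^{\ell-1}(-1)^{qm(w-1)} = \sum_{q=0}^{\ell-1}(-1)^{q} = 0,
\]
which kills the expression above and gives $N_{w-1}(x_{j_{1}}\otimes\cdots\otimes x_{j_{w}})=0$, completing the argument.

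There is no real obstacle here beyond keeping the signs straight; the entire content is the bookkeeping observation that $m\mid w$ forces the $\ell$ cyclic copies produced by grouping in blocks of length $m$ to carry alternating signs whose sum is an even number of $\pm 1$'s when the parities of $m$ and $w$ differ. If desired, I would also record the complementary fact (used later in the paper) that when $m\equiv w\pmod{2}$ the same grouping instead yields $N_{w-1}=\ell\,(1+t_{w-1}+\cdots+t_{w-1}^{m-1})$ applied to the monomial, which is a nonzero element.
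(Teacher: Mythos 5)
Your proof is correct and follows essentially the same route as the paper's: both decompose the sum defining $N_{w-1}$ into $w/m$ blocks of length $m$ using $T_{w-1}^{m}$-periodicity of the monomial, and both observe that when $w$ is even and $m$ is odd the block coefficients alternate in sign over an even number of blocks and hence cancel. The only difference is notational (the paper writes $w = 2\ell m$ where you write $w = m\ell$ with $\ell$ even).
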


\begin{proof}
First, note that since $T_{w-1}^{(w)}$ is the identity map, $m$ must divide $w$. Odd numbers only have odd divisors, so the only case we need to consider is when $w$ is even and $m$ is odd. Write $w = 2\ell m$ for some positive $\ell\in \mathbb{Z}$. A tensor monomial $x_{j_{1}}\otimes\cdots\otimes x_{j_{w}}$ in $\bar{\mathfrak{m}}_{\mathbb{Z}}^{\otimes w}$ of cycle length $m$ must be of the form $(x_{k_{1}}\otimes \cdots \otimes x_{k_{m}})^{\otimes \ell}$ where $k_{1},\ldots, k_{m} \in \{1,2, \ldots, d\}$ are such that $x_{k_{1}}\otimes\cdots\otimes x_{k_{m}}$ has cycle length $m$. Then
$$
N_{w-1}(x_{k_{1}}\otimes\cdots\otimes x_{k_{m}}\otimes \cdots \otimes x_{k_{1}}\otimes\cdots\otimes x_{k_{m}}) = \sum\limits_{i=0}^{w-1} t_{w-1}^{i}(x_{k_{1}}\otimes\cdots\otimes x_{k_{m}}\otimes \cdots \otimes x_{k_{1}}\otimes\cdots\otimes x_{k_{m}})
$$
$$
  = \sum\limits_{i=0}^{w-1} (-1)^{i \cdot (w-1)} T_{w-1}^{i}(x_{k_{1}}\otimes\cdots\otimes x_{k_{m}}\otimes \cdots \otimes x_{k_{1}}\otimes\cdots\otimes x_{k_{m}}).
$$
Since the tensor monomial has cycle length $m$, if $ a \equiv b \Mod{m}$  then 
$$
T_{w-1}^{a}(x_{k_{1}}\otimes\cdots\otimes x_{k_{m}}\otimes \cdots \otimes x_{k_{1}}\otimes\cdots\otimes x_{k_{m}}) = T_{w-1}^{b}(x_{k_{1}}\otimes\cdots\otimes x_{k_{m}}\otimes \cdots \otimes x_{k_{1}}\otimes\cdots\otimes x_{k_{m}}).
$$
The $i^{th}$ summand, $i = jm + a$ has sign $(-1)^{j}(-1)^{a}$, so this sum becomes
$$
\sum\limits_{j=0}^{2\ell-1} (-1)^{j} \Bigg( \sum\limits_{i=0}^{m-1} (-1)^{i} T_{w-1}^{i}(x_{k_{1}}\otimes\cdots\otimes x_{k_{m}}\otimes \cdots \otimes x_{k_{1}}\otimes\cdots\otimes x_{k_{m}})\Bigg) = 0.
$$
\end{proof}
\begin{lemma}\label{paritysame}
Consider a tensor monomial $x_{j_{1}}\otimes\cdots\otimes x_{j_{w}}$ of cycle length $m$ such that $m$ is the same parity as $w$ and $w=\ell \cdot m$. Then $N_{w-1}(x_{j_{1}}\otimes\cdots\otimes x_{j_{w}}) = \ell \cdot ( 1 + t_{w-1} + t_{w-1}^{2} + \cdots + t_{w-1}^{m-1}) (x_{j_{1}}\otimes\cdots\otimes x_{j_{w}})$.\end{lemma}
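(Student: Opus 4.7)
The plan is to mirror the computation in Lemma \ref{paritydifferent}, but instead of getting alternating signs in $j$ that telescope to zero, we will get constant signs in $j$ that produce the factor of $\ell$. Concretely, I would write every index $i \in \{0,1,\ldots,w-1\}$ uniquely as $i = jm + a$ with $0 \le j \le \ell - 1$ and $0 \le a \le m-1$, so that
\[
N_{w-1}(x_{j_{1}}\otimes\cdots\otimes x_{j_{w}}) = \sum_{i=0}^{w-1} t_{w-1}^{i}(x_{j_{1}}\otimes\cdots\otimes x_{j_{w}}) = \sum_{j=0}^{\ell-1}\sum_{a=0}^{m-1} t_{w-1}^{jm+a}(x_{j_{1}}\otimes\cdots\otimes x_{j_{w}}).
\]

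Next, I would apply (\ref{tnandTn}) to convert $t_{w-1}^{jm+a}$ to $(-1)^{(jm+a)(w-1)}T_{w-1}^{jm+a}$, and use the definition of cycle length $m$ (so $T_{w-1}^{m}$ fixes the monomial) to replace $T_{w-1}^{jm+a}$ by $T_{w-1}^{a}$. The computation then reduces to showing that the sign $(-1)^{(jm+a)(w-1)}$ is independent of $j$ and equals $(-1)^{a(w-1)}$, which is the key point: if both $w$ and $m$ are odd then $w-1$ is even and every sign is $+1$; if both are even then $(-1)^{jm(w-1)}=1$ because $jm$ is even, leaving $(-1)^{a(w-1)}$. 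So the inner sum is the same for each $j$, giving a factor of $\ell$.

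After this step, the remaining sum is
\[
\ell \sum_{a=0}^{m-1}(-1)^{a(w-1)} T_{w-1}^{a}(x_{j_{1}}\otimes\cdots\otimes x_{j_{w}}) = \ell \sum_{a=0}^{m-1} t_{w-1}^{a}(x_{j_{1}}\otimes\cdots\otimes x_{j_{w}}),
\]
again by (\ref{tnandTn}), which is exactly the claimed formula.

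The only real obstacle is the parity bookkeeping of the signs; once one observes that $jm(w-1)$ is always even under the hypothesis $w \equiv m \pmod 2$, the rest is routine. I would present the two parity subcases in a single line each rather than splitting the lemma, to keep the proof short.
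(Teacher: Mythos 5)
Your proposal is correct and follows essentially the same route as the paper's proof: both decompose the exponent as $i = jm + a$, convert $t_{w-1}$ to $T_{w-1}$ via (\ref{tnandTn}), and use the cycle-length periodicity together with the observation that the sign contribution $(-1)^{jm(w-1)}$ is trivial when $w \equiv m \Mod{2}$. The only difference is cosmetic: the paper separates the two parity subcases while you handle them in one unified sign computation.
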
\begin{proof}
If $w = \ell\cdot m$ for $\ell$, $m$ odd, then $t_{w-1} = T_{w-1}$ so
$$
 N_{w-1}(x_{k_{1}}\otimes\cdots\otimes x_{k_{m}}\otimes \cdots \otimes x_{k_{1}}\otimes\cdots\otimes x_{k_{m}})  
  = \sum\limits_{i=0}^{w-1} t_{w-1}^{i}(x_{k_{1}}\otimes\cdots\otimes x_{k_{m}}\otimes \cdots \otimes x_{k_{1}}\otimes\cdots\otimes x_{k_{m}}) 
$$
$$= \ell  \sum\limits_{i=0}^{m-1} t_{w-1}^{i}(x_{k_{1}}\otimes\cdots\otimes x_{k_{m}}\otimes \cdots \otimes x_{k_{1}}\otimes\cdots\otimes x_{k_{m}}) .
$$
If $w = \ell\cdot m$ for $m$ even,
$$
 N_{w-1}(x_{k_{1}}\otimes\cdots\otimes x_{k_{m}}\otimes \cdots \otimes x_{k_{1}}\otimes\cdots\otimes x_{k_{m}})  
  = \sum\limits_{i=0}^{w-1} t_{w-1}^{i}(x_{k_{1}}\otimes\cdots\otimes x_{k_{m}}\otimes \cdots \otimes x_{k_{1}}\otimes\cdots\otimes x_{k_{m}}) 
$$
$$
  = \sum\limits_{i=0}^{w-1} (-1)^{i \cdot (w-1)} T_{w-1}^{i}(x_{k_{1}}\otimes\cdots\otimes x_{k_{m}}\otimes \cdots \otimes x_{k_{1}}\otimes\cdots\otimes x_{k_{m}})
  $$
  $$
 = \sum\limits_{i=0}^{w-1} (-1)^{i} T_{w-1}^{i}(x_{k_{1}}\otimes\cdots\otimes x_{k_{m}}\otimes \cdots \otimes x_{k_{1}}\otimes\cdots\otimes x_{k_{m}})
$$
$$
= \ell  \sum\limits_{i=0}^{m-1} (-1)^{i} T_{w-1}^{i}(x_{k_{1}}\otimes\cdots\otimes x_{k_{m}}\otimes \cdots \otimes x_{k_{1}}\otimes\cdots\otimes x_{k_{m}})
$$
$$
= \ell   \sum\limits_{i=0}^{m-1} t_{w-1}^{i}(x_{k_{1}}\otimes\cdots\otimes x_{k_{m}}\otimes \cdots \otimes x_{k_{1}}\otimes\cdots\otimes x_{k_{m}}) .
$$ \end{proof}
\begin{lemma}\label{1}
Consider the family of rings $k[x_1,\hdots,x_d]/\mathfrak{m}^2$, where $\mathfrak{m}$ is the ideal $(x_1,\hdots,x_d)$ and $\bar{\mathfrak{m}} = \mathfrak{m} / \mathfrak{m}^{2}$. For any positive integer $w$,

        $$
        \dfrac{\big(\im(N_{w-1}: \bar{\mathfrak{m}}_{\mathbb{Q}}^{\otimes w}\rightarrow \bar{\mathfrak{m}}_{\mathbb{Q}}^{\otimes w})\big) \cap \bar{\mathfrak{m}}_{\mathbb{Z}}^{\otimes w}}{\im(N_{w-1}:\bar{\mathfrak{m}}_{\mathbb{Z}}^{\otimes w}\rightarrow \bar{\mathfrak{m}}_{\mathbb{Z}}^{\otimes w})} \cong \displaystyle\bigoplus_{\substack{m \mid w \\ m \equiv w \Mod{2}}}\displaystyle\bigoplus_{\omega_{m,d}} \mathbb{Z}/\big(\tfrac{w}{m}\big)
        $$
where $\omega_{m,d} = \{\textrm{all cycle families of words of length $m$ and cycle length $m$ in $x_1,\hdots,x_d$}\}$.\end{lemma}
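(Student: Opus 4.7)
The plan is to decompose $\bar{\mathfrak{m}}_{\mathbb{Z}}^{\otimes w}$ into a direct sum of $N_{w-1}$-invariant summands, one for each cycle family of tensor monomials of length $w$, and then analyze each summand separately. Because $t_{w-1}$ (and hence $N_{w-1}$) sends each cycle family into its own $\mathbb{Z}$-linear span, the quotient in the statement splits over cycle families; the same decomposition works over $\mathbb{Q}$, so the intersection $\big(\im(N_{w-1}: \bar{\mathfrak{m}}_{\mathbb{Q}}^{\otimes w}\rightarrow \bar{\mathfrak{m}}_{\mathbb{Q}}^{\otimes w})\big) \cap \bar{\mathfrak{m}}_{\mathbb{Z}}^{\otimes w}$ also respects this splitting.

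For any cycle family whose cycle length $m$ has parity opposite to $w$, Lemma \ref{paritydifferent} gives $N_{w-1}\equiv 0$ on the $\mathbb{Z}$-span of the family, so both the numerator and the denominator vanish on this summand; such families contribute nothing, which accounts for the restriction $m \equiv w \Mod{2}$ in the target.

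Fix now a cycle family of length $m$ with $m \equiv w \Mod{2}$, set $\ell = w/m$, and choose a representative $v = (x_{k_1}\otimes\cdots\otimes x_{k_m})^{\otimes \ell}$, so that $v, T_{w-1}v, \ldots, T_{w-1}^{m-1}v$ is a $\mathbb{Z}$-basis of the family's summand. Let
$$
S := \sum_{i=0}^{m-1} t_{w-1}^{i}(v).
$$
Using $t_{w-1} = (-1)^{w-1}T_{w-1}$, the identity $T_{w-1}^{m}v = v$, and Lemma \ref{paritysame}, a short calculation gives
$$
N_{w-1}\bigl(T_{w-1}^{j}v\bigr) \;=\; \varepsilon_{j}\,\ell\, S,
$$
where $\varepsilon_{j} = 1$ for all $j$ when $w$ is odd, and $\varepsilon_{j} = (-1)^{j}$ when $w$ is even (in the even case one reduces $i+j$ modulo $m$ and uses that $m$ is even to pull out a clean $(-1)^{j}$). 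Since each $\varepsilon_{j}\in\{\pm 1\}$, the image of $N_{w-1}$ on this summand is $\ell\mathbb{Z}\cdot S$ integrally and $\mathbb{Q}\cdot S$ rationally.

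Because $S$ is a $\pm 1$-linear combination of the $m$ distinct basis vectors $T_{w-1}^{i}v$, it is primitive in the $\mathbb{Z}$-lattice of the family, so $\mathbb{Q}\cdot S \cap \bar{\mathfrak{m}}_{\mathbb{Z}}^{\otimes w} = \mathbb{Z}\cdot S$ inside this summand. The contribution of this family to the quotient is therefore $\mathbb{Z}\cdot S / \ell\mathbb{Z}\cdot S \cong \mathbb{Z}/(w/m)$. Summing over all divisors $m\mid w$ with $m \equiv w \Mod{2}$, and over the $|\omega_{m,d}|$ cycle families of cycle length $m$, yields the claimed direct sum. The main obstacle is the sign bookkeeping coming from $t_{w-1} = (-1)^{w-1}T_{w-1}$ in the even-$w$ case and the verification that $N_{w-1}$ really maps every basis element of the family to a single $\pm 1$ multiple of $\ell S$ rather than to different $\pm 1$-combinations of the $T_{w-1}^{i}v$; once this reduction is in hand, primitivity of $S$ makes the final quotient computation immediate.
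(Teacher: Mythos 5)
Your proposal is correct and follows essentially the same route as the paper: both reduce to a single cycle family via Lemmas \ref{paritydifferent} and \ref{paritysame}, identify the rational image of $N_{w-1}$ on that family as the line spanned by $S=\sum_{i=0}^{m-1}t_{w-1}^{i}(v)$ and the integral image as $\ell\mathbb{Z}\cdot S$, and conclude $\mathbb{Z}/\ell$ per family. The only cosmetic difference is that the paper checks integrality of the coefficient $\ell\sum(\pm a_i)$ directly, while you phrase the same fact as primitivity of $S$ in the family's $\mathbb{Z}$-lattice.
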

$\\*$Recall the number of elements in $\omega_{m,d}$ is $\frac{1}{m}\sum_{i \vert m}\mu(m/i) d^{i}$.
\begin{proof}
For cycle families of cycle length $m$ where $w = \ell \cdot m$, we only need to consider the case where $m \equiv w \Mod{2}$ by  Lemma \ref{paritydifferent}. There are $m$ tensor monomials in its cycle family. By Lemma \ref{paritysame}, 
$$
 N_{w-1}\big((x_{k_{1}}\otimes\cdots\otimes x_{k_{m}})^{\otimes \ell}\big)=\ell  \sum\limits_{j=0}^{m-1} t_{w-1}^{j}\big((x_{k_{1}}\otimes\cdots\otimes x_{k_{m}})^{\otimes \ell}\big) .
$$
For any choice of coefficients $a_{i} \in \mathbb{Q}$ for $i \in \{1,2,\hdots,m\}$,
$$
N_{w-1}\big(a_{1}(x_{k_{1}}\otimes\cdots\otimes x_{k_{m}})^{\otimes \ell}+a_{2}(x_{k_{m}}\otimes\cdots\otimes x_{k_{m-1}})^{\otimes \ell} +\cdots+ a_{m-1}(x_{k_{3}}\otimes\cdots\otimes x_{k_{2}})^{\otimes \ell}+a_{m}(x_{k_{2}}\otimes\cdots\otimes x_{k_{1}})^{\otimes \ell}\big)
$$
$$
= \ell \cdot a_{1} \sum\limits_{j=0}^{m-1} t_{w-1}^{j}\big((x_{k_{1}}\otimes\cdots\otimes x_{k_{m}})^{\otimes \ell}\big) + \ldots + \ell \cdot a_{m} \sum\limits_{j=0}^{m-1} t_{w-1}^{j}\big((x_{k_{2}}\otimes\cdots\otimes x_{k_{1}})^{\otimes \ell}\big).
$$
$\\*$\textbf{Case 1:} If $w$ and $m$ are odd, 
$$
\ell \cdot a_{1} \sum\limits_{j=0}^{m-1} t_{w-1}^{j}\big((x_{k_{1}}\otimes\cdots\otimes x_{k_{m}})^{\otimes \ell}\big) + \ldots + \ell \cdot a_{m} \sum\limits_{j=0}^{m-1} t_{w-1}^{j}\big((x_{k_{2}}\otimes\cdots\otimes x_{k_{1}})^{\otimes \ell}\big)  
$$
$$
=\ell \cdot \Bigg(\sum\limits_{i=1}^{m} a_{i}\Bigg) \cdot \sum\limits_{j=0}^{m-1} t_{w-1}^{j}\big((x_{k_{1}}\otimes\cdots\otimes x_{k_{m}})^{\otimes \ell}\big). 
$$
Note that $\ell \cdot \bigg(\sum\limits_{i=1}^{m} a_{i}\bigg) \cdot \sum\limits_{j=0}^{m-1} t_{w-1}^{j}\big((x_{k_{1}}\otimes\cdots\otimes x_{k_{m}})^{\otimes \ell}\big) \in \bar{\mathfrak{m}}_{\mathbb{Z}}^{\otimes w}$ if and only if $\sum\limits_{i=1}^{m} a_{i} \in \dfrac{1}{\ell} \mathbb{Z}$. Therefore, $\big(\im(N_{w-1}: \bar{\mathfrak{m}}_{\mathbb{Q}}^{\otimes w}\rightarrow \bar{\mathfrak{m}}_{\mathbb{Q}}^{\otimes w})\big) \cap \bar{\mathfrak{m}}_{\mathbb{Z}}^{\otimes w} = \mathbb{Z}  \cdot \sum\limits_{j=0}^{m-1} t_{w-1}^{j}\big((x_{k_{1}}\otimes\cdots\otimes x_{k_{m}})^{\otimes \ell}\big) $. But 
$\im(N_{w-1}:\bar{\mathfrak{m}}_{\mathbb{Z}}^{\otimes w}\rightarrow \bar{\mathfrak{m}}_{\mathbb{Z}}^{\otimes w})  = \ell \cdot \mathbb{Z} \cdot \sum\limits_{j=0}^{m-1} t_{w-1}^{j}\big((x_{k_{1}}\otimes\cdots\otimes x_{k_{m}})^{\otimes \ell}\big)  $. So

$$
        \dfrac{\big(\im(N_{w-1}: \bar{\mathfrak{m}}_{\mathbb{Q}}^{\otimes w}\rightarrow \bar{\mathfrak{m}}_{\mathbb{Q}}^{\otimes w})\big) \cap \bar{\mathfrak{m}}_{\mathbb{Z}}^{\otimes w}}{\im(N_{w-1}:\bar{\mathfrak{m}}_{\mathbb{Z}}^{\otimes w}\rightarrow \bar{\mathfrak{m}}_{\mathbb{Z}}^{\otimes w})} \cong \mathbb{Z} / \ell \mathbb{Z}.
$$

$\\*$ \textbf{Case 2:} If $w$ and $m$ are even, 
$$
\ell \cdot a_{1} \sum\limits_{j=0}^{m-1} t_{w-1}^{j}\big((x_{k_{1}}\otimes\cdots\otimes x_{k_{m}})^{\otimes \ell}\big) + \ldots + \ell \cdot a_{m} \sum\limits_{j=0}^{m-1} t_{w-1}^{j}\big((x_{k_{2}}\otimes\cdots\otimes x_{k_{1}})^{\otimes \ell}\big)  
$$
$$
=\ell \cdot \Bigg(\sum\limits_{i=1}^{m} (-1)^{i+1}a_{i}\Bigg) \cdot \sum\limits_{j=0}^{m-1} t_{w-1}^{j}\big((x_{k_{1}}\otimes\cdots\otimes x_{k_{m}})^{\otimes \ell}\big).
$$
From here, the proof is the same as Case $1$ if we replace $\sum\limits_{i=1}^{m} a_{i}$ with $\sum\limits_{i=1}^{m} (-1)^{i+1}a_{i}$.

\end{proof}
\subsection{Calculating $HC_{w+1+2i}^{(w)}$ for $i\geq0$}\label{im1-tim1-t} \hfill

In Proposition \ref{applykertozandq}, we saw that:
$$
HC_{w+1+2i}^{(w)}(\mathbb{Z}[x_1,x_2,\hdots,x_d]/\mathfrak{m}^2)\cong \frac{(\im((1-t_{w-1}): \bar{\mathfrak{m}}_{\mathbb{Q}}^{\otimes w}\rightarrow \bar{\mathfrak{m}}_{\mathbb{Q}}^{\otimes w})) \cap \bar{\mathfrak{m}}_{\mathbb{Z}}^{\otimes w}}{\im((1-t_{w-1}):\bar{\mathfrak{m}}_{\mathbb{Z}}^{\otimes w}\rightarrow \bar{\mathfrak{m}}_{\mathbb{Z}}^{\otimes w})}.
$$
This section looks at images of tensor monomials of length $w$ and cycle length $m$ under the map $(1-t_{w-1})$.

\begin{lemma}\label{wecl1}
For $w$ even, each cycle family of words of cycle length $m=1$ will contribute a copy of $\mathbb{Z}/2$ to $\dfrac{(\im((1-t_{w-1}): \bar{\mathfrak{m}}_{\mathbb{Q}}^{\otimes w}\rightarrow \bar{\mathfrak{m}}_{\mathbb{Q}}^{\otimes w})) \cap \bar{\mathfrak{m}}_{\mathbb{Z}}^{\otimes w}}{\im((1-t_{w-1}):\bar{\mathfrak{m}}_{\mathbb{Z}}^{\otimes w}\rightarrow \bar{\mathfrak{m}}_{\mathbb{Z}}^{\otimes w})}$.
\end{lemma}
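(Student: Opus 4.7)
The plan is to exploit the fact that both $t_{w-1}$ and $1-t_{w-1}$ preserve the $\mathbb{Z}$-span of each cycle family (since $t_{w-1}$ sends a basis monomial to $\pm$ another monomial in the same cycle family). Hence $\bar{\mathfrak{m}}_{\mathbb{Z}}^{\otimes w}$ splits over $\mathbb{Z}$ as a direct sum indexed by cycle families, and the same is true of $\bar{\mathfrak{m}}_{\mathbb{Q}}^{\otimes w}$ over $\mathbb{Q}$. Both the $\mathbb{Z}$-image and the $\mathbb{Q}$-image of $1-t_{w-1}$, and therefore the quotient in question, split accordingly, so it suffices to analyze one cycle family at a time.

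First I would observe that a cycle family of cycle length $m=1$ consists, by Definition \ref{cl}, of a single tensor monomial $x_k^{\otimes w}$ for some $k\in\{1,\ldots,d\}$: the condition $T_{w-1}(x_{j_1}\otimes\cdots\otimes x_{j_w})=x_{j_1}\otimes\cdots\otimes x_{j_w}$ forces $j_1=\cdots=j_w$. Write $V_k := \mathbb{Z}\cdot x_k^{\otimes w}$ for the rank-one $\mathbb{Z}$-submodule spanned by this family.

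Next I would compute the action of $1-t_{w-1}$ on $V_k$. By Equation (\ref{tnandTn}) and $T_{w-1}(x_k^{\otimes w})=x_k^{\otimes w}$, we have
$$
t_{w-1}(x_k^{\otimes w}) = (-1)^{w-1} T_{w-1}(x_k^{\otimes w}) = -x_k^{\otimes w},
$$
using that $w$ is even. Consequently $(1-t_{w-1})(x_k^{\otimes w}) = 2\,x_k^{\otimes w}$.

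Finally I would read off the contribution of this family to the quotient. The $V_k$-component of $\im\bigl((1-t_{w-1})\colon \bar{\mathfrak{m}}_{\mathbb{Z}}^{\otimes w}\to\bar{\mathfrak{m}}_{\mathbb{Z}}^{\otimes w}\bigr)$ is $2\mathbb{Z}\cdot x_k^{\otimes w}$, while the $V_k\otimes_{\mathbb{Z}}\mathbb{Q}$-component of $\im\bigl((1-t_{w-1})\colon \bar{\mathfrak{m}}_{\mathbb{Q}}^{\otimes w}\to\bar{\mathfrak{m}}_{\mathbb{Q}}^{\otimes w}\bigr)$ is $\mathbb{Q}\cdot x_k^{\otimes w}$. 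Intersecting the latter with $V_k$ gives $\mathbb{Z}\cdot x_k^{\otimes w}$, so the contribution to the quotient is $\mathbb{Z}\cdot x_k^{\otimes w} / 2\mathbb{Z}\cdot x_k^{\otimes w}\cong\mathbb{Z}/2$, as claimed. There is no real obstacle here; the only thing one must be careful about is checking that the images in $\bar{\mathfrak{m}}_{\mathbb{Z}}^{\otimes w}$ and $\bar{\mathfrak{m}}_{\mathbb{Q}}^{\otimes w}$ really do decompose into direct sums indexed by cycle families, which follows because $t_{w-1}$ permutes cycle families up to sign.
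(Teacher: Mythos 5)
Your proof is correct and follows essentially the same route as the paper: the paper's own proof consists of exactly the computation $(1-t_{w-1})(x_i^{\otimes w}) = 2x_i^{\otimes w}$ for $w$ even, relying on the cycle-family decomposition already established in the surrounding text. You have merely made explicit the splitting by cycle families and the final step (that the $\mathbb{Q}$-image meets $\mathbb{Z}\cdot x_k^{\otimes w}$ in all of $\mathbb{Z}\cdot x_k^{\otimes w}$ while the $\mathbb{Z}$-image is $2\mathbb{Z}\cdot x_k^{\otimes w}$), which the paper leaves to the reader.
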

\begin{proof}
For $w$ even and $i \in \{1,2,\hdots,d\}$, 
$$
(1-t_{w-1})(x_{i}^{\otimes w})= x_{i}^{\otimes w} + x_{i}^{\otimes w} = 2(x_{i}^{\otimes w}).
$$
\end{proof}
\begin{lemma}\label{wocl1}
For $w$ odd, each cycle family of words of cycle length $m=1$ will contribute nothing to $\dfrac{(\im((1-t_{w-1}): \bar{\mathfrak{m}}_{\mathbb{Q}}^{\otimes w}\rightarrow \bar{\mathfrak{m}}_{\mathbb{Q}}^{\otimes w})) \cap \bar{\mathfrak{m}}_{\mathbb{Z}}^{\otimes w}}{\im((1-t_{w-1}):\bar{\mathfrak{m}}_{\mathbb{Z}}^{\otimes w}\rightarrow \bar{\mathfrak{m}}_{\mathbb{Z}}^{\otimes w})}$.
\end{lemma}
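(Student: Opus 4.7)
The plan is to unwind the definition of $t_{w-1}$ when $w$ is odd and observe that on tensor monomials of cycle length $1$ the map $1-t_{w-1}$ vanishes identically.

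Specifically, a tensor monomial of length $w$ and cycle length $m=1$ is of the form $x_i^{\otimes w}$ for some $i\in\{1,\ldots,d\}$, and the cycle family of $x_i^{\otimes w}$ consists of this single element. By Equation (\ref{tnandTn}) we have $t_{w-1}=(-1)^{w-1}T_{w-1}$, so when $w$ is odd, $t_{w-1}=T_{w-1}$. Since $T_{w-1}$ fixes $x_i^{\otimes w}$ (it being of cycle length $1$), we obtain
\[
(1-t_{w-1})(x_i^{\otimes w}) = x_i^{\otimes w}-x_i^{\otimes w}=0.
\]

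Next I would invoke the fact, already used in Section 5, that both $1-t_{w-1}$ and the whole calculation split over cycle families: the $\mathbb{Q}$-subspace (resp.\ $\mathbb{Z}$-submodule) generated by a cycle family is preserved by $1-t_{w-1}$, so it makes sense to read off the contribution of each cycle family to the numerator and denominator of the quotient separately. For the cycle family $\{x_i^{\otimes w}\}$ the previous display shows that $1-t_{w-1}$ restricted to this summand is the zero map, both integrally and rationally. Hence $(\im(1-t_{w-1}:\bar{\mathfrak{m}}_\mathbb{Q}^{\otimes w}\to\bar{\mathfrak{m}}_\mathbb{Q}^{\otimes w}))\cap\bar{\mathfrak{m}}_\mathbb{Z}^{\otimes w}$ and $\im(1-t_{w-1}:\bar{\mathfrak{m}}_\mathbb{Z}^{\otimes w}\to\bar{\mathfrak{m}}_\mathbb{Z}^{\otimes w})$ both intersect this summand trivially, and the contribution to the quotient is $0$.

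There is no real obstacle here; the content is just the sign computation $(-1)^{w-1}=1$ for odd $w$, together with the splitting by cycle families that has already been established. This is the natural companion statement to Lemma \ref{wecl1}, where the even parity produced a factor of $2$ instead of a cancellation.
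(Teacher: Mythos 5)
Your proof is correct and matches the paper's: both reduce to the one-line computation $(1-t_{w-1})(x_i^{\otimes w})=x_i^{\otimes w}-x_i^{\otimes w}=0$ for $w$ odd, using $t_{w-1}=(-1)^{w-1}T_{w-1}=T_{w-1}$, together with the splitting by cycle families established earlier. (Incidentally, the paper's own proof of this lemma opens with ``For $w$ even,'' which is a typo; your version states the parity correctly.)
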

\begin{proof}
For $w$ even and $i \in \{1,2,\hdots,d\}$, 
$$
(1-t_{w-1})(x_{i}^{\otimes w})= x_{i}^{\otimes w} - x_{i}^{\otimes w} = 0.
$$
\end{proof}

\begin{lemma}\label{woclo}
For any $w>1$ odd, each cycle family of words of cycle length $m>1$ will contribute nothing to $\dfrac{\big(\im\big((1-t_{w-1}): \bar{\mathfrak{m}}_{\mathbb{Q}}^{\otimes w}\rightarrow \bar{\mathfrak{m}}_{\mathbb{Q}}^{\otimes w}\big)\big) \cap \bar{\mathfrak{m}}_{\mathbb{Z}}^{\otimes w}}{\im\big((1-t_{w-1}):\bar{\mathfrak{m}}_{\mathbb{Z}}^{\otimes w}\rightarrow \bar{\mathfrak{m}}_{\mathbb{Z}}^{\otimes w}\big)}$.
\end{lemma}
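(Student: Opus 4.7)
The plan is to exploit that, for $w$ odd, $(-1)^{w-1} = 1$ so $t_{w-1} = T_{w-1}$ is the honest unsigned cyclic permutation. Consequently $1-t_{w-1}$ preserves each cycle family setwise, and the question splits as a direct sum over cycle families of the corresponding ambient modules over $\mathbb{Z}$ and $\mathbb{Q}$. Fix one cycle family $\{v_0,v_1,\ldots,v_{m-1}\}$ of cycle length $m>1$ with $v_{i+1} = T_{w-1}(v_i)$ (indices mod $m$), and let $M_{\mathbb{Z}} \subset \bar{\mathfrak{m}}_{\mathbb{Z}}^{\otimes w}$ and $M_{\mathbb{Q}} \subset \bar{\mathfrak{m}}_{\mathbb{Q}}^{\otimes w}$ be the $\mathbb{Z}$- and $\mathbb{Q}$-spans. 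Because these are direct summands compatible with $1-t_{w-1}$, it suffices to show the contribution of this family,
$$
\frac{\bigl(\im((1-t_{w-1})|_{M_{\mathbb{Q}}})\bigr) \cap M_{\mathbb{Z}}}{\im((1-t_{w-1})|_{M_{\mathbb{Z}}})},
$$
vanishes.

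Writing a general element of $M_R$ (for $R = \mathbb{Z}$ or $\mathbb{Q}$) as $\sum_{i=0}^{m-1} a_i v_i$, the operator acts by the telescoping difference
$$
(1-t_{w-1})\Bigl(\sum_{i=0}^{m-1} a_i v_i\Bigr) = \sum_{i=0}^{m-1}(a_i - a_{i-1}) v_i.
$$
The next step is to identify both images with the augmentation-zero submodule
$$
\im((1-t_{w-1})|_{M_R}) = \Bigl\{\sum_{i=0}^{m-1} b_i v_i : b_i \in R,\ \sum_{i=0}^{m-1} b_i = 0\Bigr\}.
$$
The inclusion $\subseteq$ is immediate from the formula, and the reverse inclusion follows by solving the telescoping recursion over $R$ via $a_0 := 0$ and $a_i := \sum_{j=1}^{i} b_j$, which lies in $R$ whenever the $b_j$ do; the freedom in $a_0$ reflects the fact that the invariant vector $\sum_i v_i$ lies in the kernel.

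Finally, intersecting the rational augmentation-zero submodule with $M_{\mathbb{Z}}$ plainly produces the integral augmentation-zero submodule, which coincides with the integer image. Hence the quotient vanishes family by family, proving the lemma. The only mildly delicate point is the clean splitting of $1-t_{w-1}$ along cycle families, which works precisely because $w$ odd kills the sign in $t_{w-1} = T_{w-1}$; beyond that the argument is routine linear algebra and I do not anticipate a serious obstacle.
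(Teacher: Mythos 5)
Your proof is correct and follows essentially the same route as the paper: restrict to a single cycle family (legitimate since $t_{w-1}=T_{w-1}$ for $w$ odd preserves each family), and show by explicit linear algebra on the $m$-dimensional permutation module that an integral vector in the rational image of $1-t_{w-1}$ already lies in the integral image. Your identification of the image with the augmentation-zero submodule $\{\sum b_i v_i : \sum b_i = 0\}$ is a slightly cleaner packaging of the paper's row-reduction of the $m\times m$ circulant matrix, but the underlying computation is the same.
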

\begin{proof} Let $w >1$ be odd, and write $w = m \cdot \ell$. Then $m$ must also be odd. If the word is $(x_{k_{1}}\otimes \cdots \otimes x_{k_{m}})^{\otimes \ell}$, then there are $m$ tensor monomials in its cycle family. 

$\\*$Consider the image of the span over $\mathbb{Q}$ of these tensor monomials under $\big( 1-t_{w-1} \big): \bar{\mathfrak{m}}_{\mathbb{Q}}^{\otimes w}\rightarrow \bar{\mathfrak{m}}_{\mathbb{Q}}^{\otimes w}$. Let $a_{i} \in \mathbb{Q}$ for any $i \in \{1,2,\hdots,m\}$. Taking the $m$ words in the cycle family as a basis for $\bar{\mathfrak{m}}_{\mathbb{Q}}^{\otimes w}$, we can express the map $(1-t_{w-1})$ by the matrix

  

\[
\begin{bmatrix}
1 & 0 & \cdots & 0 & -1\\
-1 & 1 & \cdots& 0 & 0\\
0 & -1 & \cdots & 0 & 0 \\
\vdots & \vdots & \ddots & \vdots & \vdots\\
0 & 0& \cdots & 1&0 \\
0 & 0& \cdots & -1& 1 \\
\end{bmatrix}_{m\times m}.
\]

$\\*$If we row reduce (which can be done over $\mathbb{Z}$), then we get the matrix 

\[
\begin{bmatrix}
1 & 0 & \cdots & 0 & -1\\
0& 1 & \cdots& 0 & -1\\
0 & 0& \cdots & 0 & -1\\
\vdots & \vdots & \ddots & \vdots & \vdots\\
0 & 0& \cdots & 1&-1 \\
0 & 0& \cdots & 0& 0 \\
\end{bmatrix}_{m\times m}.
\]

$\\*$So the first $(m-1)$-columns of the original $m \times m $ matrix are a basis for $\im(1-t_{w-1})$, and the question is: what can be said about $\alpha_{1}, \alpha_{2},\hdots,\alpha_{m-1}$ in the following equation if $b_{1},b_{2},\hdots,b_{m}$ are all integers?

 \begin{align}\label{oddspan2}
          \alpha_{1}\begin{bmatrix}
           1 \\  
           -1\\
           0\\
           \vdots \\
           0\\
           0\\
          \end{bmatrix}_{m\times 1} +
          \alpha_{2}\begin{bmatrix}
           0 \\  
           1\\
           -1\\
           \vdots \\
           0\\
           0\\
          \end{bmatrix}_{m\times 1}+ \cdots +
          \alpha_{m-1}\begin{bmatrix}
           0 \\  
           0\\
            0\\
           \vdots \\
           1\\
           -1\\
          \end{bmatrix}_{m\times 1} =
          \begin{bmatrix}
           b_{1} \\  
           b_{2} \\
           b_{3}\\
           \vdots \\
           b_{m-1} \\
           b_{m} \\
          \end{bmatrix}_{m\times 1}
  \end{align}
If $b_{1},b_{2},\hdots,b_{m}$ are all integers, then $\alpha_{1}, \alpha_{2},\hdots,\alpha_{m-1}$ are also all integers. Therefore, a cycle family of tensor monomials of length $w>2$ for $w$ odd and with cycle length $m>2$ does not generate anything in $\frac{(\im((1-t_{w-1}): \bar{\mathfrak{m}}_{\mathbb{Q}}^{\otimes w}\rightarrow \bar{\mathfrak{m}}_{\mathbb{Q}}^{\otimes w})) \cap \bar{\mathfrak{m}}_{\mathbb{Z}}^{\otimes w}}{\im((1-t_{w-1}):\bar{\mathfrak{m}}_{\mathbb{Z}}^{\otimes w}\rightarrow \bar{\mathfrak{m}}_{\mathbb{Z}}^{\otimes w})}$.

\end{proof}

\begin{lemma}\label{wecle}
For any $w>1$ even, each cycle family of words of cycle length $m$ also even will contribute nothing to $\dfrac{\big(\im\big((1-t_{w-1}): \bar{\mathfrak{m}}_{\mathbb{Q}}^{\otimes w}\rightarrow \bar{\mathfrak{m}}_{\mathbb{Q}}^{\otimes w}\big)\big) \cap \bar{\mathfrak{m}}_{\mathbb{Z}}^{\otimes w}}{\im\big((1-t_{w-1}):\bar{\mathfrak{m}}_{\mathbb{Z}}^{\otimes w}\rightarrow \bar{\mathfrak{m}}_{\mathbb{Z}}^{\otimes w}\big)}$.
\end{lemma}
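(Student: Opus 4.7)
The plan is to adapt the matrix argument of Lemma \ref{woclo} to the present even-parity setting. The crucial observation is that when $w$ is even, Equation (\ref{tnandTn}) gives $t_{w-1} = -T_{w-1}$, so the operator we are studying becomes $(1 - t_{w-1}) = (1 + T_{w-1})$ rather than $(1 - T_{w-1})$. I would fix a single cycle family of cycle length $m$ with $m$ even and $w = \ell m$, and take the basis
$$
v_i := T_{w-1}^{\,i}\big((x_{k_{1}}\otimes\cdots\otimes x_{k_{m}})^{\otimes \ell}\big), \qquad i = 0, 1, \ldots, m-1.
$$
With respect to this basis, $(1+T_{w-1})$ sends $v_i \mapsto v_i + v_{(i+1) \bmod m}$, so it is represented by the $m \times m$ matrix
$$
M = \begin{bmatrix}
1 & 0 & \cdots & 0 & 1\\
1 & 1 & \cdots & 0 & 0\\
0 & 1 & \cdots & 0 & 0 \\
\vdots & \vdots & \ddots & \vdots & \vdots\\
0 & 0 & \cdots & 1 & 0\\
0 & 0 & \cdots & 1 & 1\\
\end{bmatrix}_{m\times m},
$$
which is precisely the matrix appearing in Lemma \ref{woclo} with each $-1$ replaced by $+1$.

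Next I would row-reduce $M$ over $\mathbb{Z}$ by successively applying $R_i \to R_i - R_{i-1}^{\mathrm{new}}$ for $i = 1, \ldots, m-1$. A short induction shows that after the $i$-th step, the new row $i$ has the shape $(0, \ldots, 0, 1, 0, \ldots, 0, (-1)^i)$, with the $1$ in column $i$ and $(-1)^i$ in column $m-1$. At the last step, because $m$ is even we have $(-1)^{m-2} = 1$, so $R_{m-2}^{\mathrm{new}} = (0, \ldots, 0, 1, 1)$ coincides with the unchanged $R_{m-1} = (0, \ldots, 0, 1, 1)$, whence $R_{m-1}^{\mathrm{new}} = 0$. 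The reduced matrix therefore has an identity block on the first $m-1$ rows and columns, the alternating vector $(1, -1, 1, \ldots, -1, 1, 0)^T$ in its last column, and a zero bottom row.

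This row reduction shows that the first $m-1$ columns of $M$ form a $\mathbb{Z}$-basis of $\im\big((1-t_{w-1}):\bar{\mathfrak{m}}_\mathbb{Z}^{\otimes w} \to \bar{\mathfrak{m}}_\mathbb{Z}^{\otimes w}\big)$ restricted to this cycle family. To conclude, I would mimic Equation (\ref{oddspan2}): given any integer target $(b_1, \ldots, b_m) \in \bar{\mathfrak{m}}_\mathbb{Z}^{\otimes w}$ that also lies in the $\mathbb{Q}$-image, write it as $\sum_{i=1}^{m-1} \alpha_i \cdot \mathrm{col}_i$ and solve recursively $\alpha_1 = b_1$, $\alpha_j = b_j - \alpha_{j-1}$ for $j = 2, \ldots, m-1$; these $\alpha_i$'s are integers whenever all $b_j$'s are. (The compatibility of the trailing equation $\alpha_{m-1} = b_m$ is equivalent to the alternating-sum condition cutting out $\im_\mathbb{Q}$.) Hence every integer element of $\im_\mathbb{Q}$ already lies in $\im_\mathbb{Z}$ and the cycle family contributes $0$ to the quotient.

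I do not anticipate any genuine obstacle; the argument is a direct parallel of Lemma \ref{woclo}. The only point requiring care is the parity identity $(-1)^{m-2} = 1$ at the end of the row reduction, which is the precise place where the hypothesis that $m$ is even is used essentially in order to kill the bottom row and make the $\mathbb{Z}$-image and $\mathbb{Q}$-image agree on integer vectors.
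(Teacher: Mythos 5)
Your proposal is correct and follows essentially the same route as the paper's proof: the same $m\times m$ matrix for $(1-t_{w-1})=(1+T_{w-1})$ on the cycle family, the same integral row reduction whose bottom row vanishes precisely because $m$ is even, and the same recursive integrality argument for the coefficients $\alpha_i$. The extra detail you supply (the explicit recursion $\alpha_1=b_1$, $\alpha_j=b_j-\alpha_{j-1}$ and the parity identity $(-1)^{m-2}=1$) just makes explicit what the paper leaves implicit.
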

\begin{proof}  $w=m\cdot \ell$. Then a monomial
$(x_{k_{1}}\otimes\cdots\otimes x_{k_{m}})^{\otimes \ell}$ with cycle length $m$ has $m$ monomials in its cycle family. Consider the image of the span over $\mathbb{Q}$ of these words under 
$(1-t_{w-1}): \bar{\mathfrak{m}}_{\mathbb{Q}}^{\otimes w}\rightarrow \bar{\mathfrak{m}}_{\mathbb{Q}}^{\otimes w}$. Taking the $m$ words in the cycle family as a basis for $\bar{\mathfrak{m}}_{\mathbb{Q}}^{\otimes w}$, we can express the map $(1-t_{w-1})$ by the matrix
\[
\begin{bmatrix}
1 & 0 & \cdots & 0 & 1\\
1 & 1 & \cdots& 0 & 0\\
0 & 1 & \cdots & 0 & 0 \\
\vdots & \vdots & \ddots & \vdots & \vdots\\
0 & 0& \cdots & 1&0 \\
0 & 0& \cdots & 1& 1 \\
\end{bmatrix}_{m\times m}.
\]

$\\*$We get the following row-reduced version of the matrix (since $m$ is even)

\[
\begin{bmatrix}
1 & 0 &0& 0&\cdots & 0& 0& 0 & 1\\
0& 1 & 0&0&\cdots& 0 & 0& 0& -1\\
0 & 0& 1&0&\cdots & 0& 0& 0 & 1\\
0 & 0& 0&1&\cdots & 0& 0& 0 & -1\\
\vdots & \vdots & \vdots & \vdots & \ddots& \vdots & \vdots& \vdots &\vdots \\
0 & 0& 0&0&\cdots & 1&0& 0& 1\\
0 & 0& 0&0&\cdots & 0& 1& 0 & -1\\
0 & 0& 0&0&\cdots & 0& 0& 1&1 \\
0 & 0& 0&0&\cdots & 0& 0& 0& 0 \\
\end{bmatrix}_{m\times m}.
\]
So the first $m-1$ columns of the original matrix span the image of $(1-t_{w-1})$, and the question is: what can be said about $\alpha_{1}, \alpha_{2},\hdots,\alpha_{m-1}$ in the following equation if $b_{1},b_{2},\hdots,b_{m}$ are all integers?

 \begin{align}\label{oddspan4}
          \alpha_{1}\begin{bmatrix}
           1 \\  
           1\\
           0\\
           \vdots \\
           0\\
           0\\
          \end{bmatrix}_{m\times 1} +
          \alpha_{2}\begin{bmatrix}
           0 \\  
           1\\
           1\\
           \vdots \\
           0\\
           0\\
          \end{bmatrix}_{m\times 1}+ \cdots +
          \alpha_{m-1}\begin{bmatrix}
           0 \\  
           0\\
           0\\
           \vdots \\
           1\\
           1\\
          \end{bmatrix}_{m\times 1} =
          \begin{bmatrix}
           b_{1} \\  
           b_{2} \\
           b_{3}\\
           \vdots \\
           b_{m-1} \\
           b_{m} \\
          \end{bmatrix}_{m\times 1}
  \end{align}
Again, if  $b_{1},b_{2},\hdots,b_{m}$ are all integers, then $\alpha_{1}, \alpha_{2},\hdots,\alpha_{m-1}$ are also all integers. Therefore, a cycle family of monomials of length $w>2$ and $w$ even with cycle length $m$ also even and $m>2$ does not generate anything in $\frac{(\im((1-t_{w-1}): \bar{\mathfrak{m}}_{\mathbb{Q}}^{\otimes w}\rightarrow \bar{\mathfrak{m}}_{\mathbb{Q}}^{\otimes w})) \cap \bar{\mathfrak{m}}_{\mathbb{Z}}^{\otimes w}}{\im((1-t_{w-1}):\bar{\mathfrak{m}}_{\mathbb{Z}}^{\otimes w}\rightarrow \bar{\mathfrak{m}}_{\mathbb{Z}}^{\otimes w})}$.
\end{proof}
\begin{lemma}\label{weclo}
For any $w>1$ even, each cycle family of words of cycle length $m>1$ where $m$ is odd  will contribute a copy of $\mathbb{Z}/2$ to $\dfrac{(\im((1-t_{w-1}): \bar{\mathfrak{m}}_{\mathbb{Q}}^{\otimes w}\rightarrow \bar{\mathfrak{m}}_{\mathbb{Q}}^{\otimes w})) \cap \bar{\mathfrak{m}}_{\mathbb{Z}}^{\otimes w}}{\im((1-t_{w-1}):\bar{\mathfrak{m}}_{\mathbb{Z}}^{\otimes w}\rightarrow \bar{\mathfrak{m}}_{\mathbb{Z}}^{\otimes w})}$.
\end{lemma}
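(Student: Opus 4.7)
The plan is to mimic the matrix-based arguments of Lemmas \ref{wecle} and \ref{woclo}, with the crucial difference that the Smith normal form of the restricted operator will now carry exactly one factor of $2$, producing the announced $\mathbb{Z}/2$.

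First I would set up notation: write $w = \ell m$ (with $\ell$ forced even because $w$ is even and $m$ is odd), and fix a cycle family $\{v_1, \ldots, v_m\}$ where $v_{i+1} = T_{w-1}(v_i)$ and indices are read modulo $m$. Since $w-1$ is odd, equation (\ref{tnandTn}) yields $t_{w-1} = -T_{w-1}$, so $(1 - t_{w-1})v_i = v_i + v_{i+1}$. On the ordered basis $v_1, \ldots, v_m$, the restriction of $(1 - t_{w-1})$ to the $\mathbb{Q}$-span of the cycle family is represented by exactly the same $m \times m$ matrix $I + P$ displayed in Lemma \ref{wecle}, where $P$ is the cyclic shift. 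I would then perform integer row reduction by executing $R_k \mapsto R_k - R_{k-1}'$ for $k = 2, \ldots, m$. A short induction shows that for $2 \leq k \leq m-1$ the reduced row $R_k'$ has a $1$ in column $k$ and $(-1)^{k-1}$ in the last column. Because $m$ is odd, $R_{m-1}'$ ends with $-1$, so the final step produces
$$R_m' = (0, \ldots, 0, 1, 1) - R_{m-1}' = (0, \ldots, 0, 2),$$
instead of the zero last row that occurred in the $m$ even case of Lemma \ref{wecle}; the Smith form of the restricted operator is therefore $\diag(1, \ldots, 1, 2)$.

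Over $\mathbb{Q}$ the determinant is nonzero, so the image of $(1 - t_{w-1})$ on the $\mathbb{Q}$-span of the cycle family is all of $\mathbb{Q}^m$, and its intersection with the integer lattice $\mathbb{Z}^m$ is $\mathbb{Z}^m$ itself. Over $\mathbb{Z}$ the image is an index-$2$ sublattice. Hence each cycle family of length $w$ and odd cycle length $m > 1$ contributes exactly one copy of $\mathbb{Z}/2$ to the quotient, as claimed. The only delicate point I expect is the sign bookkeeping in the row reduction: the opposite parity of $m$ relative to Lemma \ref{wecle} flips the final entry from $1 - 1 = 0$ to $1 - (-1) = 2$; once that sign check is carried out, the rest of the argument follows verbatim from the earlier lemmas.
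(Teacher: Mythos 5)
Your proposal is correct and follows essentially the same route as the paper: both restrict to a single cycle family, represent $1-t_{w-1}$ by the same $m\times m$ circulant matrix $M=I+P$, observe that full rank over $\mathbb{Q}$ makes the numerator the whole lattice $\mathbb{Z}^m$, and reduce to computing $\mathbb{Z}^m/M\mathbb{Z}^m$. The only difference is cosmetic: the paper solves $M\alpha=b$ explicitly and reads off the parity condition on $\sum_i b_i$, whereas you extract the index $2$ from an integer row reduction (equivalently $\det M=2$), which is a slightly cleaner way to finish the same computation.
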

\begin{proof}
Here, the matrix of $(1-t_{w-1})$ is

\[ M=
\begin{bmatrix}
1 & 0 & \cdots & 0 & 1\\
1 & 1 & \cdots& 0 & 0\\
0 & 1 & \cdots & 0 & 0 \\
\vdots & \vdots & \ddots & \vdots & \vdots\\
0 & 0& \cdots & 1&0 \\
0 & 0& \cdots & 1& 1 \\
\end{bmatrix}_{m\times m.}
\]

$\\*$This matrix has full rank. So the question is: what can be said about $\alpha_{1}, \alpha_{2},\hdots,\alpha_{m-1}, \alpha_{m}$ in the following equation if $b_{1},b_{2},\hdots,b_{m}$ are all integers?

 \begin{align}\label{evenoddspan}
          \alpha_{1}\begin{bmatrix}
           1 \\  
           1\\
           0\\
           \vdots \\
           0\\
           0\\
          \end{bmatrix}_{m\times 1} +
          \alpha_{2}\begin{bmatrix}
           0 \\  
           1\\
           1\\
           \vdots \\
           0\\
           0\\
          \end{bmatrix}_{m\times 1}+ \cdots +
          \alpha_{m-1}\begin{bmatrix}
           0 \\  
           0\\
           0\\
           \vdots \\
           1\\
           1\\
          \end{bmatrix}_{m\times 1} +
          \alpha_{m}\begin{bmatrix}
           1 \\  
           0\\
           0\\
           \vdots \\
           0\\
           1\\
          \end{bmatrix}_{m\times 1} =
          \begin{bmatrix}
           b_{1} \\  
           b_{2} \\
           b_{3}\\
           \vdots \\
           b_{m-1} \\
           b_{m} \\
          \end{bmatrix}_{m\times 1}
  \end{align}

$\\*$If we row reduce the associated matrix, we get the following matrix. All of the signs flip on the terms from line to line except one sign that stays the same. The underlining emphasizes the pattern.
\[
\left[
\begin{array}{ccccccccc|c}
1 & 0 &0& 0&\cdots & 0& 0& 0 & 0 & \frac{1}{2}(-b_{m}+b_{m-1} - b_{m-2} +b_{m-3} \cdots -b_{5}+b_{4}-b_{3}\underline{+\space b_{2}}+b_{1})\\
0& 1 & 0&0&\cdots& 0 & 0& 0& 0& \frac{1}{2}(b_{m}-b_{m-1} + b_{m-2} -b_{m-3} \cdots +b_{5}-b_{4} \underline{+ b_{3}}+b_{2}-b_{1})\\
0 & 0& 1&0&\cdots & 0& 0& 0 & 0& \frac{1}{2}(-b_{m}+b_{m-1} - b_{m-2} +b_{m-3} \cdots -b_{5}\underline{+ b_{4}}+b_{3}-b_{2}+b_{1})\\
0 & 0& 0&1&\cdots & 0& 0& 0 & 0& \frac{1}{2}(b_{m}-b_{m-1} + b_{m-2} -b_{m-3} \cdots \underline{+ b_{5}}+b_{4}-b_{3}+b_{2}-b_{1})\\
\vdots & \vdots & \vdots & \vdots & \ddots& \vdots & \vdots& \vdots &\vdots &\vdots\\
0 & 0& 0&0&\cdots & 1&0& 0& 0& \frac{1}{2}(b_{m}-b_{m-1} \underline{+ b_{m-2}} +b_{m-3} \cdots -b_{5}+b_{4}-b_{3}+b_{2}-b_{1})\\
0 & 0& 0&0&\cdots & 0& 1& 0 &0 &\frac{1}{2}(-b_{m}\underline{+ b_{m-1}} + b_{m-2} -b_{m-3} \cdots +b_{5}-b_{4}+b_{3}-b_{2}+b_{1})\\
0 & 0& 0&0&\cdots & 0& 0& 1&0& \frac{1}{2}(\underline{+ b_{m}}+b_{m-1} - b_{m-2} +b_{m-3} \cdots -b_{5}+b_{4}-b_{3}+b_{2}-b_{1})\\
0 & 0& 0&0&\cdots & 0& 0& 0& 1 & \frac{1}{2}(b_{m}-b_{m-1} + b_{m-2} -b_{m-3} \cdots +b_{5}-b_{4}+b_{3}-b_{2}+b_{1})\\
\end{array}
\right]
\]

$\\*$This tells us that the solution of Equation (\ref{evenoddspan}) is
$$
\alpha_{1} =\frac{1}{2}(-b_{m}+b_{m-1} - b_{m-2} +b_{m-3} \hdots -b_{3}+b_{2}+b_{1})
$$
$$
\alpha_{2} = \frac{1}{2}(b_{m}-b_{m-1} + b_{m-2} -b_{m-3} \hdots +b_{3}+b_{2}-b_{1})
$$
$$
\alpha_{3} =  \frac{1}{2}(-b_{m}+b_{m-1} - b_{m-2} +b_{m-3} \hdots +b_{3}-b_{2}+b_{1})
$$
$$
\alpha_{4} = \frac{1}{2}(b_{m}-b_{m-1} + b_{m-2} -b_{m-3} \hdots -b_{3}+b_{2}-b_{1})
$$
$$
\vdots
$$
$$
\alpha_{m-3} =\frac{1}{2}(b_{m}-b_{m-1} + b_{m-2} +b_{m-3} \hdots -b_{3}+b_{2}-b_{1})
$$
$$
\alpha_{m-2} = \frac{1}{2}(-b_{m}+b_{m-1} + b_{m-2} -b_{m-3} \hdots +b_{3}-b_{2}+b_{1})
$$
$$
\alpha_{m-1} =  \frac{1}{2}(b_{m}+b_{m-1} - b_{m-2} +b_{m-3} \hdots -b_{3}+b_{2}-b_{1})
$$
$$
\alpha_{m} = \frac{1}{2}(b_{m}-b_{m-1} + b_{m-2} -b_{m-3} \hdots +b_{3}-b_{2}+b_{1}).
$$
When $b_{1},\hdots,b_{m}$ are all integers, $(\pm b_{m}\pm b_{m-1} \pm b_{m-2} \pm b_{m-3} \cdots \pm b_{3} \pm b_{2} \pm b_{1})  \equiv (b_{m}+b_{m-1} + b_{m-2} +b_{m-3} \cdots +b_{3}+b_{2}+b_{1}) \Mod{2}$ for any choice of $\pm$. Therefore, when $b_{1},\hdots,b_{m}$ are all integers and $(b_{m}+b_{m-1} + b_{m-2} +b_{m-3} \cdots +b_{3}+b_{2}+b_{1}) \equiv 1 \Mod{2}$ we know ${\alpha_{1}, \hdots , \alpha_{m} }\in \mathbb{Q} \smallsetminus \mathbb{Z} $ and in fact are in $(\frac{1}{2} \mathbb{Z}) \smallsetminus \mathbb{Z}$. Also, when $b_{1},\hdots,b_{m}$ are all integers and $(b_{m}+b_{m-1} + b_{m-2} +b_{m-3} \cdots +b_{3}+b_{2}+b_{1}) \equiv 0 \Mod{2}$ we know ${\alpha_{1}, \hdots , \alpha_{m} }\in \mathbb{Z} $. 

Now consider the map $f: \mathbb{Z}^{m} \xrightarrow{} \mathbb{Z}/2$ defined as $f(\alpha_{1},\alpha_{2},\hdots,\alpha_{m-1},\alpha_{m})= [\alpha_{1}+\alpha_{2}+\cdots+\alpha_{m-1}+\alpha_{m}].$ Note that $\ker(f) = M \cdot \mathbb{Z}^{m}$. Now, $(\im((1-t_{w-1}): \bar{\mathfrak{m}}_{\mathbb{Q}}^{\otimes w}\rightarrow \bar{\mathfrak{m}}_{\mathbb{Q}}^{\otimes w})) \cap \bar{\mathfrak{m}}_{\mathbb{Z}}^{\otimes w}$ is exactly the span over $\mathbb{Z}$ of 
$(x_{k_{1}}\otimes\cdots\otimes x_{k_{m}})^{\otimes \ell}, (x_{k_{m}}\otimes\cdots\otimes x_{k_{m-1}})^{\otimes \ell}, \hdots ,(x_{k_{3}}\otimes\cdots\otimes x_{k_{2}})^{\otimes \ell},$ and $(x_{k_{2}}\otimes\cdots\otimes x_{k_{1}})^{\otimes \ell}$, which are isomorphic to $\mathbb{Z}^{m}$. Under this isomorphism, $\im((1-t_{w-1}):\bar{\mathfrak{m}}_{\mathbb{Z}}^{\otimes w}\rightarrow \bar{\mathfrak{m}}_{\mathbb{Z}}^{\otimes w}) \cong M\cdot \mathbb{Z}^{m}$. Therefore,

$$
\dfrac{\big(\im\big((1-t_{w-1}): \bar{\mathfrak{m}}_{\mathbb{Q}}^{\otimes w}\rightarrow \bar{\mathfrak{m}}_{\mathbb{Q}}^{\otimes w}\big)\big) \cap \bar{\mathfrak{m}}_{\mathbb{Z}}^{\otimes w}}{\im\big((1-t_{w-1}):\bar{\mathfrak{m}}_{\mathbb{Z}}^{\otimes w}\rightarrow \bar{\mathfrak{m}}_{\mathbb{Z}}^{\otimes w}\big)} \cong \dfrac{\mathbb{Z}^{m}}{(M \cdot \mathbb{Z}^{m})} \cong \mathbb{Z}/2.
$$\end{proof}
$\\*$Lemmas \ref{wecl1}, \ref{wocl1},  \ref{woclo},  \ref{wecle}, and \ref{weclo} combine to give:

\begin{lemma}\label{2}
       Consider the family of rings $k[x_1,\hdots,x_d]/\mathfrak{m}^2$, where $\mathfrak{m}$ is the ideal $(x_1,\hdots,x_d)$ and $\bar{\mathfrak{m}} = \mathfrak{m} / \mathfrak{m}^{2}$. For all $w \in \mathbb{Z}^{+}$, 

       $$
        \dfrac{\big(\im\big((1-t_{w-1}): \bar{\mathfrak{m}}_{\mathbb{Q}}^{\otimes w}\rightarrow \bar{\mathfrak{m}}_{\mathbb{Q}}^{\otimes w}\big)\big) \cap \bar{\mathfrak{m}}_{\mathbb{Z}}^{\otimes w}}{\im\big((1-t_{w-1}):\bar{\mathfrak{m}}_{\mathbb{Z}}^{\otimes w}\rightarrow \bar{\mathfrak{m}}_{\mathbb{Z}}^{\otimes w}\big)}\cong \displaystyle\bigoplus_{\substack{m \mid w \\ m \not\equiv w \Mod{2}}}\displaystyle\bigoplus_{\omega_{m,d}} \mathbb{Z}/2
        $$
        
$\\*$where $\omega_{m,d} = \{\textrm{all cycle families of words of length $m$ of cycle length $m$ in $x_1,\hdots,x_d$}\}$. (Note that if $w$ is odd, then there are no $m$ with $m \mid w$ such that $m \not\equiv w \Mod{2}$.)
        
\end{lemma}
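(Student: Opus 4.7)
The plan is to assemble Lemma \ref{2} by decomposing $\bar{\mathfrak{m}}_{\mathbb{Z}}^{\otimes w}$ (and its rational version) as a direct sum indexed by cycle families of tensor monomials and applying the five preceding lemmas one cycle family at a time. The key observation is that, as noted in the preamble to Lemma \ref{paritydifferent}, both $t_{w-1}$ and hence $1-t_{w-1}$ send each cycle family to linear combinations of elements of that same family. Therefore the $\mathbb{Z}$-submodule spanned by the elements of a single cycle family is stable under $1-t_{w-1}$, the map splits as a direct sum of its restrictions to these stable pieces, and both the numerator and the denominator of the quotient in the statement split accordingly.

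Once this decomposition is in place, I would organize the analysis by the parity of $w$ and by the cycle length $m$, which must divide $w$. If $w$ is odd, then every divisor $m$ of $w$ is also odd, and the only two sub-cases are $m=1$ and $m>1$; Lemmas \ref{wocl1} and \ref{woclo} show that each cycle family contributes $0$, so the whole quotient is $0$. This matches the right-hand side of the lemma because when $w$ is odd there are no divisors $m$ of $w$ with $m\not\equiv w\Mod{2}$. If $w$ is even, the divisors $m$ split into the even ones and the odd ones (with $m=1$ as a special case of the latter). Lemma \ref{wecle} handles even $m$ and shows no contribution, while Lemmas \ref{wecl1} (for $m=1$) and \ref{weclo} (for odd $m>1$) each produce a copy of $\mathbb{Z}/2$ per cycle family. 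Combining, a cycle family of cycle length $m$ contributes $\mathbb{Z}/2$ exactly when $m\not\equiv w\Mod{2}$, and $0$ otherwise.

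Finally, I would translate ``cycle families of tensor monomials of length $w$ and cycle length $m$'' (with $m\mid w$, write $w=\ell m$) into the set $\omega_{m,d}$. The map
$$
(x_{k_1}\otimes\cdots\otimes x_{k_m})\longmapsto (x_{k_1}\otimes\cdots\otimes x_{k_m})^{\otimes\ell}
$$
yields a bijection between $\omega_{m,d}$ and the cycle families of length $w$ with cycle length exactly $m$, because a length-$w$ monomial has cycle length $m$ if and only if it is an $\ell$-fold self-tensor of a primitive length-$m$ word. Summing the contributions over all eligible $m$ and over $\omega_{m,d}$ for each such $m$ gives the asserted direct-sum description.

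I expect no serious obstacle: the genuine content is in Lemmas \ref{wecl1}--\ref{weclo}, and the present statement is essentially a bookkeeping step. The only point requiring a little care is ensuring the direct-sum decomposition by cycle families is compatible with the intersection with $\bar{\mathfrak{m}}_{\mathbb{Z}}^{\otimes w}$ inside $\bar{\mathfrak{m}}_{\mathbb{Q}}^{\otimes w}$. This is immediate because the tensor monomials form a $\mathbb{Z}$-basis of $\bar{\mathfrak{m}}_{\mathbb{Z}}^{\otimes w}$ and a $\mathbb{Q}$-basis of $\bar{\mathfrak{m}}_{\mathbb{Q}}^{\otimes w}$, so the cycle-family decomposition is compatible with the inclusion $\bar{\mathfrak{m}}_{\mathbb{Z}}^{\otimes w}\hookrightarrow\bar{\mathfrak{m}}_{\mathbb{Q}}^{\otimes w}$ and with taking images of $1-t_{w-1}$ on either side.
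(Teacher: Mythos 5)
Your proposal is correct and matches the paper exactly: the paper derives Lemma \ref{2} precisely by combining Lemmas \ref{wecl1}, \ref{wocl1}, \ref{woclo}, \ref{wecle}, and \ref{weclo} via the cycle-family decomposition, which is legitimate since $t_{w-1}$ preserves the span of each cycle family. Your added remarks on the bijection between $\omega_{m,d}$ and cycle families of length-$w$ monomials of cycle length $m$, and on the compatibility of the decomposition with the intersection inside $\bar{\mathfrak{m}}_{\mathbb{Q}}^{\otimes w}$, are correct bookkeeping details that the paper leaves implicit.
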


\subsection{Calculating $HC_{w-1}^{(w)}$}\hfill

In Proposition \ref{applykertozandq}, we saw that
$$
HC_{w-1}^{(w)}(\mathbb{Z}[x_1,x_2,\hdots,x_d]/\mathfrak{m}^2)\cong \frac{ \bar{\mathfrak{m}}_{\mathbb{Z}}^{\otimes w}}{\im\big((1-t_{w-1}):\bar{\mathfrak{m}}_{\mathbb{Z}}^{\otimes w}\rightarrow \bar{\mathfrak{m}}_{\mathbb{Z}}^{\otimes w}\big)}. $$
\begin{lemma}\label{wecl1.m}
For $w$ even, each word of cycle length $m=1$ will contribute a copy of $\mathbb{Z}/2$ to $\dfrac{ \bar{\mathfrak{m}}_{\mathbb{Z}}^{\otimes w}}{\im\big((1-t_{w-1}):\bar{\mathfrak{m}}_{\mathbb{Z}}^{\otimes w}\rightarrow \bar{\mathfrak{m}}_{\mathbb{Z}}^{\otimes w}\big)}$.
\end{lemma}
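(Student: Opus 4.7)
The plan is to reduce the computation to a single cycle family at a time. A word of cycle length $m=1$ is by definition a tensor monomial of the form $x_i^{\otimes w}$ for some $i \in \{1,\ldots,d\}$, and its cycle family is the singleton $\{x_i^{\otimes w}\}$. Since $t_{w-1}$ permutes tensor monomials within each cycle family, both the free $\mathbb{Z}$-module $\bar{\mathfrak{m}}_{\mathbb{Z}}^{\otimes w}$ and the submodule $\im\bigl((1-t_{w-1}):\bar{\mathfrak{m}}_{\mathbb{Z}}^{\otimes w}\to\bar{\mathfrak{m}}_{\mathbb{Z}}^{\otimes w}\bigr)$ decompose as direct sums indexed by cycle families. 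It therefore suffices to isolate the summand corresponding to the cycle family $\{x_i^{\otimes w}\}$.

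The first step is to observe that the $\mathbb{Z}$-span of this cycle family is just $\mathbb{Z}\cdot x_i^{\otimes w}\cong \mathbb{Z}$. The second step is to identify the corresponding summand of the image of $(1-t_{w-1})$. Exactly as in the proof of Lemma \ref{wecl1}, for $w$ even we have
\[
(1-t_{w-1})(x_i^{\otimes w}) \;=\; x_i^{\otimes w}-(-1)^{w-1}x_i^{\otimes w} \;=\; 2\, x_i^{\otimes w},
\]
so the summand of the image contributed by this cycle family is $2\mathbb{Z}\cdot x_i^{\otimes w}$. The third step is to take the quotient: $\mathbb{Z}\cdot x_i^{\otimes w}/2\mathbb{Z}\cdot x_i^{\otimes w}\cong \mathbb{Z}/2$, which gives the claimed contribution.

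There is essentially no obstacle here, since the cycle-family decomposition reduces the statement to the one-dimensional computation already carried out in Lemma \ref{wecl1}. The only point to be careful about is that no element from outside this cycle family can hit $x_i^{\otimes w}$ under $(1-t_{w-1})$, which is guaranteed by the cycle-family preservation noted above.
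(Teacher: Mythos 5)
Your proof is correct and takes essentially the same approach as the paper, whose entire argument is the single computation $(1-t_{w-1})(x_i^{\otimes w}) = 2x_i^{\otimes w}$; you have simply made explicit the cycle-family decomposition that the paper establishes once earlier in the section and then uses implicitly.
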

\begin{proof}
For $w$ even and $i \in \{1,2,\hdots,d\}$, 
$$
(1-t_{w-1})(x_{i}^{\otimes w})= x_{i}^{\otimes w} + x_{i}^{\otimes w} = 2(x_{i}^{\otimes w}).
$$
\end{proof}

\begin{lemma}\label{wocl1.m}
For $w$ odd, each word of cycle length $m=1$ will contribute a copy of $\mathbb{Z}$ to \newline $\dfrac{ \bar{\mathfrak{m}}_{\mathbb{Z}}^{\otimes w}}{\im\big((1-t_{w-1}):\bar{\mathfrak{m}}_{\mathbb{Z}}^{\otimes w}\rightarrow \bar{\mathfrak{m}}_{\mathbb{Z}}^{\otimes w}\big)}$.
\end{lemma}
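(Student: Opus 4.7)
The plan is to mirror the very short style of Lemma \ref{wecl1.m} and of Lemma \ref{wocl1}. The key observation is that a word of cycle length $m=1$ in $\bar{\mathfrak{m}}_{\mathbb{Z}}^{\otimes w}$ is, by Definition \ref{cl}, a tensor monomial fixed by $T_{w-1}$, hence of the form $x_i^{\otimes w}$ for some $i\in\{1,\ldots,d\}$. Its cycle family is the singleton $\{x_i^{\otimes w}\}$, whose $\mathbb{Z}$-span inside $\bar{\mathfrak{m}}_{\mathbb{Z}}^{\otimes w}$ is a free rank-one summand.

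The first step is a one-line computation: for $w$ odd, the sign $(-1)^{w-1}=1$ in the definition of $t_{w-1}$ (see \eqref{tnandTn}) yields
$$
(1-t_{w-1})(x_i^{\otimes w}) \;=\; x_i^{\otimes w} - (-1)^{w-1}\,T_{w-1}(x_i^{\otimes w}) \;=\; x_i^{\otimes w} - x_i^{\otimes w} \;=\; 0.
$$
So the map $(1-t_{w-1})$ vanishes on the $\mathbb{Z}$-span of $x_i^{\otimes w}$.

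The second step is to explain why this contributes a clean direct summand $\mathbb{Z}$ to the quotient. Since both $1$ and $t_{w-1}$ send a tensor monomial in a given cycle family to an element of the $\mathbb{Z}$-span of that same cycle family (up to sign), the map $(1-t_{w-1})$ splits as a direct sum over cycle families. Therefore the contribution of the cycle family $\{x_i^{\otimes w}\}$ to the quotient $\bar{\mathfrak{m}}_{\mathbb{Z}}^{\otimes w}/\im(1-t_{w-1})$ is just $\mathbb{Z}/0 \cong \mathbb{Z}$.

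There is no substantial obstacle here; the only thing to be careful about is invoking the cycle-family decomposition of $(1-t_{w-1})$, which has already been used implicitly in the preceding lemmas of Section \ref{im1-tim1-t}. The final write-up can be essentially two lines, parallel to the proof of Lemma \ref{wecl1.m}.
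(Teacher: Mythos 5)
Your proposal is correct and matches the paper's proof, which is exactly the one-line computation $(1-t_{w-1})(x_i^{\otimes w}) = x_i^{\otimes w} - x_i^{\otimes w} = 0$ for $w$ odd. Your additional remarks on the cycle-family splitting are justification the paper leaves implicit (having stated the decomposition once before Lemma \ref{paritydifferent}), so nothing is genuinely different in approach.
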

\begin{proof}
For $w$ odd and $i \in \{1,2,\hdots,d\}$, 
$$
(1-t_{w-1})(x_{i}^{\otimes w})= x_{i}^{\otimes w} - x_{i}^{\otimes w} = 0.
$$
\end{proof}

\begin{lemma}\label{woclo.m}

For any $w>1$ odd, each cycle family of words of cycle length $m>1$ will contribute a copy of $\mathbb{Z}$ to $\dfrac{\bar{\mathfrak{m}}_{\mathbb{Z}}^{\otimes w}}{\im\big((1-t_{w-1}):\bar{\mathfrak{m}}_{\mathbb{Z}}^{\otimes w}\rightarrow \bar{\mathfrak{m}}_{\mathbb{Z}}^{\otimes w}\big)}$.
\end{lemma}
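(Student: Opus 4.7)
The plan is to decompose $\bar{\mathfrak{m}}_{\mathbb{Z}}^{\otimes w}$ into cycle families and reduce the lemma to a single $m\times m$ linear algebra problem over $\mathbb{Z}$ whose quotient is manifestly $\mathbb{Z}$. Because $w>1$ is odd, every divisor $m$ of $w$ is odd and $(-1)^{w-1}=1$, so $t_{w-1}=T_{w-1}$ on $\bar{\mathfrak{m}}_{\mathbb{Z}}^{\otimes w}$. Since $T_{w-1}$ permutes tensor monomials and preserves cycle families, $\bar{\mathfrak{m}}_{\mathbb{Z}}^{\otimes w}$ splits as a $\mathbb{Z}$-direct sum indexed by cycle families, and both $\im(1-t_{w-1})$ and the quotient split compatibly. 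It therefore suffices to show that a cycle family of cycle length $m>1$ contributes exactly one copy of $\mathbb{Z}$.

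Fix such a family with representative $(x_{k_{1}}\otimes\cdots\otimes x_{k_{m}})^{\otimes \ell}$ where $w=m\ell$, and use the $m$ distinct $T_{w-1}$-rotates as a $\mathbb{Z}$-basis to identify the family's span with $\mathbb{Z}^{m}$. In this basis $(1-t_{w-1})$ is represented by exactly the $m\times m$ matrix appearing in the proof of Lemma \ref{woclo}, whose $i$-th column for $i<m$ is $e_{i}-e_{i+1}$ and whose last column is $e_{m}-e_{1}$. Each column has coordinate sum $0$, so the restricted image lies in the kernel of the augmentation map $\Sigma\colon\mathbb{Z}^{m}\to\mathbb{Z}$, $(b_{1},\dots,b_{m})\mapsto b_{1}+\cdots+b_{m}$.

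The key step is the reverse inclusion. Given $(b_{1},\dots,b_{m})\in\ker\Sigma$, the partial sums $\alpha_{i} := b_{1}+\cdots+b_{i}$ for $1\leq i\leq m-1$ express $(b_{1},\dots,b_{m})$ as an integer combination of the first $m-1$ columns; this is exactly the solution of Equation (\ref{oddspan2}) specialised to right-hand sides with zero coordinate sum. Consequently $\Sigma$ induces an isomorphism $\mathbb{Z}^{m}/\im(1-t_{w-1})\cong\mathbb{Z}$, yielding the claimed $\mathbb{Z}$ contribution per cycle family. I do not expect a serious obstacle: the family-wise decomposition has already been used in Lemmas \ref{woclo}--\ref{weclo}, and the linear algebra runs in exact parallel to Lemma \ref{woclo}; the only twist is that we are now computing a cokernel in the ambient module $\bar{\mathfrak{m}}_{\mathbb{Z}}^{\otimes w}$ itself rather than inside a rational image, but the integrality of the partial sums $\alpha_{i}$ is immediate.
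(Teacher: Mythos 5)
Your proof is correct and takes essentially the same route as the paper: decompose by cycle families, use that $t_{w-1}=T_{w-1}$ for $w$ odd, and observe that the image of $1-t_{w-1}$ on the $\mathbb{Z}^{m}$ spanned by a family is exactly the augmentation kernel, so the quotient is a single copy of $\mathbb{Z}$. The paper states this more tersely (``quotienting identifies the $m$ generators, leaving one generator of $\mathbb{Z}$''); your explicit identification of the image with $\ker\Sigma$ via the partial sums supplies the freeness of the quotient that the paper leaves implicit.
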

\begin{proof} $w=m\cdot \ell$. As before, every tensor monomial $(x_{k_{1}}\otimes\cdots\otimes x_{k_{m}})^{\otimes \ell}$ of cycle length $m$ has $m$ other words of length $w$ in its cycle family. Quotienting out by the image of $(1-t_{w-1})$ identifies these $m$ generators with each other, to give the unique generator of 
$\frac{\mathbb{Z}^{\oplus m}}{im(1-t_{w-1})} \cong \mathbb{Z}$.
\end{proof}
\begin{lemma}\label{wecle.m}
For any $w>1 $ even, each cycle family of words of cycle length $m$ also even will contribute a copy of $\mathbb{Z}$ to $\dfrac{\bar{\mathfrak{m}}_{\mathbb{Z}}^{\otimes w}}{\im\big((1-t_{w-1}):\bar{\mathfrak{m}}_{\mathbb{Z}}^{\otimes w}\rightarrow \bar{\mathfrak{m}}_{\mathbb{Z}}^{\otimes w}\big)}$.
\end{lemma}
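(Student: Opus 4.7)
The plan is to mirror the proof of Lemma \ref{wecle}: restrict $(1-t_{w-1})$ to one cycle family, write it as a concrete $m\times m$ matrix, and read off the cokernel over $\mathbb{Z}$. Writing $w=\ell m$ with both even, label the cycle family of $(x_{k_1}\otimes\cdots\otimes x_{k_m})^{\otimes\ell}$ by its $m$ distinct elements $v_0,\ldots,v_{m-1}$, with $T_{w-1}(v_i)=v_{i+1\bmod m}$. These freely span a $t_{w-1}$-stable rank-$m$ summand $V$ of $\bar{\mathfrak{m}}_{\mathbb{Z}}^{\otimes w}$, and because $\bar{\mathfrak{m}}_{\mathbb{Z}}^{\otimes w}$ decomposes as a direct sum of such $V$'s over all cycle families, the contribution of this cycle family to $\bar{\mathfrak{m}}_{\mathbb{Z}}^{\otimes w}/\im(1-t_{w-1})$ equals $V/(1-t_{w-1})V$.

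Next, using equation (\ref{tnandTn}) together with $w$ even, the restriction of $1-t_{w-1}$ to $V$ equals $1+T_{w-1}$, which in the basis $(v_0,\ldots,v_{m-1})$ is represented by exactly the $m\times m$ matrix displayed in the proof of Lemma \ref{wecle}. That proof also carries out the integral row reduction, producing a matrix whose first $m-1$ columns are the standard basis vectors $e_1,\ldots,e_{m-1}$ of $\mathbb{Z}^m$ and whose last column $(1,-1,1,-1,\ldots,1,0)^{T}$ lies in their $\mathbb{Z}$-span. Since integer row operations preserve the cokernel, I would conclude at once that $V/(1-t_{w-1})V\cong \mathbb{Z}^m/\mathbb{Z}^{m-1}\cong \mathbb{Z}$. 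Equivalently, one may argue by hand: the relations $v_i+v_{i+1}=0$ for $i=0,\ldots,m-2$ force $v_i=(-1)^{i}v_0$, and the wrap-around relation $v_{m-1}+v_0=0$ becomes $-v_0+v_0=0$ precisely because $m$ is even, contributing no extra relation, so $V/(1-t_{w-1})V$ is free of rank one on $[v_0]$.

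I do not anticipate a real obstacle, since all the matrix manipulation is already performed in Lemma \ref{wecle}; the only new step is reading the cokernel off the row-reduced form. The single point worth emphasizing in the write-up is why the parity of $m$ being even is essential: it is exactly what makes the closing relation $v_{m-1}+v_0=0$ reduce to $0=0$ rather than to a $2$-torsion relation of the sort that produces the $\mathbb{Z}/2$'s in Lemma \ref{weclo}.
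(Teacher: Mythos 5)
Your proposal is correct and takes essentially the same approach as the paper: the paper's proof is exactly your ``by hand'' argument, identifying the $m$ generators of a cycle family up to alternating signs and observing that the wrap-around relation is vacuous because $m$ is even, leaving a single copy of $\mathbb{Z}$. Your matrix/row-reduction version is just a more explicit rendering of the same computation, reusing the matrix already analyzed in Lemma \ref{wecle}.
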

\begin{proof} Again, there are $m$ tensor monomials of length $w$ in the cycle family of this word. Quotienting out by the image of $(1-t_{w-1})$ identifies the following generators
$$
(x_{k_{1}}\otimes\cdots\otimes x_{k_{m}})^{\otimes \ell}
\sim
-(x_{k_{m}}\otimes\cdots\otimes x_{k_{m-1}})^{\otimes \ell}
\sim
\cdots
\sim
(x_{k_{3}}\otimes\cdots\otimes x_{k_{2}})^{\otimes \ell}
\sim
-(x_{k_{2}}\otimes\cdots\otimes x_{k_{1}})^{\otimes \ell}
$$
so again we are just left with a single copy of $\mathbb{Z}$.
\end{proof}
\begin{lemma}\label{weclo.m}
For any $w>2$ even, each cycle family of words of cycle length $m>1$ with $m$ odd will contribute a copy of $\mathbb{Z}/2$ to $\dfrac{\bar{\mathfrak{m}}_{\mathbb{Z}}^{\otimes w}}{\im\big((1-t_{w-1}):\bar{\mathfrak{m}}_{\mathbb{Z}}^{\otimes w}\rightarrow \bar{\mathfrak{m}}_{\mathbb{Z}}^{\otimes w}\big)}$.
\end{lemma}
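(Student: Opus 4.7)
The plan is to analyze the contribution to the quotient one cycle family at a time, exploiting that $(1-t_{w-1})$ preserves the $\mathbb{Z}$-span of each cycle family and that distinct cycle families span complementary direct summands of $\bar{\mathfrak{m}}_{\mathbb{Z}}^{\otimes w}$. I would write $w = \ell m$ (with $\ell$ forced to be even since $w$ is even and $m$ odd), fix $v_1 = (x_{k_1}\otimes\cdots\otimes x_{k_m})^{\otimes \ell}$, and set $v_i = T_{w-1}^{i-1}(v_1)$ for $i = 1, \ldots, m$; these are the $m$ distinct monomials in the cycle family and form a $\mathbb{Z}$-basis for the rank-$m$ summand they span.

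Next, I would compute $(1-t_{w-1})$ on this summand. Since $w$ is even, equation (\ref{tnandTn}) gives $t_{w-1} = -T_{w-1}$, so $t_{w-1}(v_i) = -v_{i+1}$ with indices taken modulo $m$; hence $(1-t_{w-1})(v_i) = v_i + v_{i+1}$. In the quotient the relations $v_{i+1} \equiv -v_i$ allow iteration around the cycle, and using $v_{m+1} = v_1$ together with $m$ odd yields $v_1 \equiv (-1)^m v_1 = -v_1$, so $2 v_1 \equiv 0$ and every $v_i$ is identified with $\pm v_1$. This already gives an upper bound of $\mathbb{Z}/2$ on the contribution of this cycle family.

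The delicate step, which I expect to be the main obstacle, is the matching lower bound: showing $v_1 \not\equiv 0$ in the quotient. For this I would introduce the $\mathbb{Z}$-linear functional $\phi : \sum_i \alpha_i v_i \mapsto [\sum_i \alpha_i] \in \mathbb{Z}/2$ on the cycle-family summand. It vanishes on every generator $(1-t_{w-1})(v_i) = v_i + v_{i+1}$ of the image since $\phi(v_i + v_{i+1}) = [2] = 0$, while $\phi(v_1) = [1] \neq 0$; hence $\phi$ descends to a surjection onto $\mathbb{Z}/2$, pinning the contribution down to exactly $\mathbb{Z}/2$. Summing one such copy for every cycle family of cycle length $m$ then completes the lemma.
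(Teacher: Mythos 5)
Your proof is correct and follows essentially the same route as the paper: since $w$ is even, $t_{w-1}=-T_{w-1}$, so the image of $1-t_{w-1}$ on the cycle-family summand is generated by the elements $v_i+v_{i+1}$, and traversing the odd-length cycle yields $v_1\sim -v_1$, hence $2v_1\sim 0$ with all $v_i\sim\pm v_1$. Your mod-$2$ augmentation functional $\phi$ makes explicit the lower bound $v_1\not\sim 0$, a point the paper's proof of this lemma asserts without argument (it is in effect justified elsewhere by the full-rank row reduction of the same matrix in Lemma \ref{weclo}); this is a worthwhile tightening but not a different method.
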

\begin{proof} Again, there are $m$ monomials of length $w$ in the cycle family of this word. When we quotient the $\mathbb{Z}^{\oplus m}$ generated by the tensor monomials in the cycle family of $(x_{k_{1}}\otimes\cdots\otimes x_{k_{m}})^{\otimes \ell}$ by the image of $(1-t_{w-1})$, we get
$$
(x_{k_{1}}\otimes\cdots\otimes x_{k_{m}})^{\otimes \ell}
\sim
-(x_{k_{m}}\otimes\cdots\otimes x_{k_{m-1}})^{\otimes \ell}
\sim
\cdots
\sim
(x_{k_{2}}\otimes\cdots\otimes x_{k_{1}})^{\otimes \ell}
\sim
-(x_{k_{1}}\otimes\cdots\otimes x_{k_{m}})^{\otimes \ell}
$$
so we are just left with $\mathbb{Z}/2$ generated by any one of the elements.
\end{proof}
Gathering the results in Lemmas \ref{wecl1.m}, \ref{wocl1.m}, \ref{woclo.m},  \ref{wecle.m}, and \ref{weclo.m} give us the following Lemma.
\begin{lemma}\label{3}
       Consider the family of rings $k[x_1,\hdots,x_d]/\mathfrak{m}^2$, where $\mathfrak{m}$ is the ideal $(x_1,\hdots,x_d)$ and $\bar{\mathfrak{m}} = \mathfrak{m} / \mathfrak{m}^{2}$. For all $w \in \mathbb{Z}^{+}$, 
               $$\dfrac{\bar{\mathfrak{m}}_{\mathbb{Z}}^{\otimes w}}{\im((1-t_{w-1}):\bar{\mathfrak{m}}_{\mathbb{Z}}^{\otimes w}\rightarrow \bar{\mathfrak{m}}_{\mathbb{Z}}^{\otimes w})} \cong         \Bigg(\displaystyle\bigoplus_{\substack{m \mid w \\ m \equiv w \Mod{2}}}\displaystyle\bigoplus_{\omega_{m,d}} \mathbb{Z}\Bigg) \bigoplus \Bigg(        \displaystyle\bigoplus_{\substack{m \mid w \\ m \not\equiv w \Mod{2}}}\displaystyle\bigoplus_{\omega_{m,d}} \mathbb{Z}/2\Bigg)$$
where $\omega_{m,d} = \{\textrm{all cycle families of words of length $m$ of cycle length $m$ in $x_1,\hdots,x_d$}\}$. (Note that if $w$ is odd, then there are no $m$ with $m \mid w$ such that $m \not\equiv w \Mod{2}$.)
\end{lemma}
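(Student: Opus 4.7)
The plan is to reduce the statement to the five lemmas just proved by decomposing the quotient along cycle families and then unifying the cases by parity. The first step is to observe that $\bar{\mathfrak{m}}_{\mathbb{Z}}^{\otimes w}$ is $\mathbb{Z}$-free on the tensor monomials $x_{j_{1}}\otimes\cdots\otimes x_{j_{w}}$, which are partitioned into cycle families under $\langle T_{w-1}\rangle$; since $t_{w-1} = (-1)^{w-1} T_{w-1}$ by Equation (\ref{tnandTn}), the map $1-t_{w-1}$ sends each tensor monomial to a $\mathbb{Z}$-linear combination of monomials lying in its own family. Consequently I obtain a direct sum decomposition
$$\dfrac{\bar{\mathfrak{m}}_{\mathbb{Z}}^{\otimes w}}{\im(1-t_{w-1})} \;\cong\; \bigoplus_{m \mid w}\;\bigoplus_{\text{cycle families of cycle length }m} Q_{m,w},$$
where $Q_{m,w}$ is the per-family quotient and the outer sum ranges over divisors of $w$ because every cycle length must divide $w$.

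Next I would split by the parities of $w$ and $m$ and apply the corresponding lemma to identify $Q_{m,w}$. Lemma \ref{wecl1.m} gives $Q_{1,w} \cong \mathbb{Z}/2$ when $w$ is even, and Lemma \ref{wocl1.m} gives $Q_{1,w} \cong \mathbb{Z}$ when $w$ is odd. For $m > 1$: Lemma \ref{woclo.m} gives $Q_{m,w} \cong \mathbb{Z}$ when $w$ is odd (which forces $m$ odd), Lemma \ref{wecle.m} gives $\mathbb{Z}$ when $w$ and $m$ are both even, and Lemma \ref{weclo.m} gives $\mathbb{Z}/2$ when $w$ is even and $m>1$ is odd. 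These five sub-cases together cover every divisor $m$ of $w$ without overlap.

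To finish, I would observe the uniform pattern across the five cases: $Q_{m,w} \cong \mathbb{Z}$ precisely when $m \equiv w \Mod{2}$, and $Q_{m,w} \cong \mathbb{Z}/2$ precisely when $m \not\equiv w \Mod{2}$. Regrouping the inner direct sum according to this parity dichotomy, and identifying the indexing set of cycle families of cycle length $m$ in $x_{1},\dots,x_{d}$ with $\omega_{m,d}$, yields the claimed isomorphism. The only real obstacle is the parity bookkeeping, namely checking that the five sub-lemmas collectively hit each pair $(w,m)$ with $m \mid w$ exactly once; since all five lemmas are already established, no further computation is needed.
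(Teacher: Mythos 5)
Your proposal is correct and follows essentially the same route as the paper: the paper's proof of Lemma \ref{3} is precisely the assembly of Lemmas \ref{wecl1.m}, \ref{wocl1.m}, \ref{woclo.m}, \ref{wecle.m}, and \ref{weclo.m}, using the earlier observation that $t_{w-1}$ preserves cycle families so the quotient splits as a direct sum over them. Your explicit parity bookkeeping (that the per-family contribution is $\mathbb{Z}$ exactly when $m \equiv w \Mod{2}$ and $\mathbb{Z}/2$ otherwise) is exactly the regrouping the paper performs implicitly.
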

Proposition \ref{applykertozandq}, Lemma \ref{1}, Lemma \ref{2}, and  Lemma \ref{3} give

\begin{theorem}\label{Zcyclic}
Let $A = \mathbb{Z}[x_1,x_2,\hdots,x_d]/\mathfrak{m}^2$ where $\mathfrak{m}$ is the ideal $(x_1,x_2,\hdots,x_d)$ and $\bar{\mathfrak{m}} = \mathfrak{m} / \mathfrak{m}^{2}$. Then
\begin{center}
$
HC_{n}^{(0)}(A) \cong 
    \begin{dcases}
        \mathbb{Z} & n  \textrm{ even and } n \geq 0 \\
        0 & n \textrm{ else} \\
    \end{dcases}
$
\end{center}
and for $w > 0$
\begin{center}
$
HC_{n}^{(w)}(A) \cong 
    \begin{dcases} 
       \>\>\>\>\>\>\>\>\>\>\>\>\>\>\>\>\>\>\>\>\>\>\>\>\> 0  & n \leq w-2 \\
       \\
        \Bigg(\displaystyle\bigoplus_{\substack{m \mid w \\ m \equiv w \Mod{2}}}\displaystyle\bigoplus_{\omega_{m,d}} \mathbb{Z}\Bigg) \bigoplus \Bigg(        \displaystyle\bigoplus_{\substack{m \mid w \\ m \not\equiv w \Mod{2}}}\displaystyle\bigoplus_{\omega_{m,d}} \mathbb{Z}/2\Bigg) & n = w-1 \\
       \\
       \displaystyle\bigoplus_{\substack{m \mid w \\ m \equiv w \Mod{2}}}\displaystyle\bigoplus_{\omega_{m,d}} \mathbb{Z}/(\tfrac{w}{m})
         & n = w + 2i,\hspace{11.5mm} i\geq 0 \\
        \\
       \displaystyle\bigoplus_{\substack{m \mid w \\ m \not\equiv w \Mod{2}}}\displaystyle\bigoplus_{\omega_{m,d}} \mathbb{Z}/2 & n = w + 1 + 2i, \hspace{5mm}i\geq0 \>  \\
    \end{dcases}
$
\end{center}
where $\omega_{m,d} = \{\textrm{all cycle families of words length $m$ and cycle length $m$ in $x_1,\hdots,x_d$}\}$  has order \newline $\frac{1}{m}\sum_{i \vert m}\mu(m/i) d^{i}$.
\end{theorem}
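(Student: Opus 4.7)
The plan is to assemble Theorem \ref{Zcyclic} by substituting the three preceding lemmas into the four cases of Proposition \ref{applykertozandq}. Proposition \ref{applykertozandq} has already done the substantive homological work: for $w>0$ it expresses each $HC_n^{(w)}(A)$ as one of three subquotients of $\bar{\mathfrak{m}}_{\mathbb{Z}}^{\otimes w}$, namely the cokernel of $(1-t_{w-1})$ on $\bar{\mathfrak{m}}_{\mathbb{Z}}^{\otimes w}$, the quotient $\big((\im N_{w-1,\mathbb{Q}})\cap \bar{\mathfrak{m}}_{\mathbb{Z}}^{\otimes w}\big)/\im N_{w-1,\mathbb{Z}}$, and the analogous quotient with $N_{w-1}$ replaced by $(1-t_{w-1})$. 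Lemmas \ref{1}, \ref{2}, and \ref{3} identify these three subquotients explicitly in terms of cycle families.

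The $w=0$ case is already stated verbatim in Proposition \ref{applykertozandq}, so no further work is needed. For $w>0$ I would proceed by cases on $n$. The case $n\leq w-2$ is immediate from Proposition \ref{applykertozandq}. For $n=w-1$ I would substitute Lemma \ref{3}, producing a direct sum of $\mathbb{Z}$'s indexed by cycle families with $m\equiv w\Mod{2}$ together with a direct sum of $\mathbb{Z}/2$'s indexed by cycle families with $m\not\equiv w\Mod{2}$. For $n=w+2i$ with $i\geq 0$ I would substitute Lemma \ref{1}, yielding $\bigoplus_{m\mid w,\, m\equiv w\,(2)}\bigoplus_{\omega_{m,d}}\mathbb{Z}/(w/m)$. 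For $n=w+1+2i$ with $i\geq 0$ I would substitute Lemma \ref{2}, yielding $\bigoplus_{m\mid w,\, m\not\equiv w\,(2)}\bigoplus_{\omega_{m,d}}\mathbb{Z}/2$.

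The parenthetical remark that no $m\mid w$ satisfies $m\not\equiv w\Mod{2}$ when $w$ is odd is immediate (odd numbers have only odd divisors), and simply records that the corresponding summands vanish in the odd-$w$ case. The formula $|\omega_{m,d}|=\tfrac{1}{m}\sum_{i\mid m}\mu(m/i)d^i$ was already justified in the $\mathbb{Q}$-section via Möbius inversion on the $d^m$ length-$m$ words in $x_1,\ldots,x_d$, and is quoted here without re-proof.

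Since all the genuine content has been absorbed into the three lemmas, the proof of the theorem is really just bookkeeping, and there is no substantial obstacle. The only point requiring care is that the indexing conventions in Lemmas \ref{1}, \ref{2}, and \ref{3} match precisely those appearing in the theorem statement—in particular that the parity conditions $m\equiv w\Mod{2}$ and $m\not\equiv w\Mod{2}$ line up with the rows ``$n=w+2i$'' and ``$n=w+1+2i$'' respectively, and that the $n=w-1$ row combines both parity classes as the direct sum $\mathbb{Z}\oplus \mathbb{Z}/2$ contributions from Lemma \ref{3}.
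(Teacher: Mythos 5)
Your proposal is correct and matches the paper exactly: the paper's entire proof of Theorem \ref{Zcyclic} is the single line ``Proposition \ref{applykertozandq}, Lemma \ref{1}, Lemma \ref{2}, and Lemma \ref{3} give'' the stated result, which is precisely the case-by-case substitution you describe. No gaps.
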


\section{Negative Cyclic Homology of $k[x_1,x_2,\hdots,x_d]/\mathfrak{m}^2$}\hfill

We now want to calculate $HC_{*}^{-}(A)$ for $A = k[x_1,x_2,\hdots,x_d]/\mathfrak{m}^2$, to do this we will compute the total homology of the double complex in Definition \ref{negative}. Again, we can study the resulting calculation of $HC_{*}^{-}(A)$ by splitting it into $(HC^{-})_{*}^{(w)}(A)$ for all  $w \geq 0 $ like we did for $HC_{*}^{(w)}(A)$. 

\begin{center}
    $HC_{*}^{-}(A) \cong \displaystyle\bigoplus_{w=0}^{\infty}(HC^{-})_{*}^{(w)}(A)$
\end{center}
As in Equation (\ref{HCW0}), if $w=0$ the $E^{1}_{p,q} = 0$ unless $p\leq 0$ is even and $q=0$, so $E^{1} \cong E^{\infty}$ and
 
\begin{equation}\label{HCNW0}
(HC^{-})_{n}^{(0)}(A) \cong 
    \begin{dcases}
        k  & n \textrm{ even and } n \leq 0 \\
        0 &  \textrm{else} \\
    \end{dcases}.
\end{equation}
Again for $w>0$ in the $E^{1}$-page, the even columns (numbered less than or equal to 0) consist of $HH_{*}(A)$ and the odd columns are zero. So we again have  $\partial^{1}=0$ and $E^{1} \cong E^{2}$. Again, all $\partial^{r}$ for $r\geq 3$ are $0$ for dimension reasons. Therefore, the $E^{3}$ page is same as the $E^{\infty}$ page, and we get
 
\begin{equation}\label{HNCNW}
(HC^{-})_{n}^{(w)}(A) \cong 
\begin{dcases}
0  & n > w \\
\ker(1-t_{w-1}) & n = w \\
\ker(\partial^{2}) & n = w -1-2i,\hspace{5mm} i\geq 0  \\
\coker(\partial^{2}) & n = w - 2i,\hspace{11.5mm} i> 0.\\
\end{dcases}
\end{equation}

Following the same reasoning that appears after Equation (\ref{HCNW}), we can rewrite this as

\begin{equation}\label{HNCNW2}
(HC^{-})_{n}^{(w)}(A) \cong 
\begin{dcases}
0  & n > w \\
\ker(1-t_{w-1}) & n = w \\
 \ker(\coker(1-t_{w-1}) \xrightarrow{N_{w-1}} \ker(1-t_{w-1})) & n = w -1-2i,\hspace{5mm} i\geq 0 \\
\coker(\coker(1-t_{w-1}) \xrightarrow{N_{w-1}} \ker(1-t_{w-1})) & n = w - 2i,\hspace{11.5mm} i> 0. \\
\end{dcases}
\end{equation}

\medskip

\subsection{Comparison to Ext}\hfill

The calculation of the pieces of Equation (\ref{HNCNW2}) is exactly what we get computing $Ext_{n}^{k[C_{w}]}(k,\bar{\mathfrak{m}}^{\otimes w})$ where $C_{w}= \langle \alpha : \alpha^{(w)}=1  \rangle$ is the cyclic group of order $w$ and $\bar{\mathfrak{m}}^{\otimes w}$ is a $k[C_{w}]$-module with the action
$$
\alpha(x_{n_{1}}\otimes\cdots\otimes x_{n_{w}}) = t_{w-1}(x_{n_{1}}\otimes\cdots\otimes x_{n_{w}}) = (-1)^{w-1}(x_{n_{w}}\otimes x_{n_{1}} \otimes\cdots\otimes x_{n_{-1}}).
$$
To compute $Ext_{n}^{k[C_{w}]}(k,\bar{\mathfrak{m}}^{\otimes w})$, we apply  $\Hom_{k[C_{w}]}(-,\bar{\mathfrak{m}}^{\otimes w})$ to the resolution in Equation (\ref{r}) to get  
\begin{center}
\begin{tikzcd}[
  ar symbol/.style = {draw=none,"#1" description,sloped},
  isomorphic/.style = {ar symbol={\cong}},
  equals/.style = {ar symbol={=}},
  ]
\cdots& \Hom(k[C_{w}],\bar{\mathfrak{m}}^{\otimes w}) \arrow{l}{(1-\alpha) \circ \_}& \Hom(k[C_{w}],\bar{\mathfrak{m}}^{\otimes w})\arrow{l}{N \circ \_}&\Hom(k[C_{w}],\bar{\mathfrak{m}}^{\otimes w})\arrow{l}{(1-\alpha) \circ \_}& 0\arrow{l} \> 
\end{tikzcd}
\end{center}
so $Ext_{n}^{k[C_{w}]}(k,\bar{\mathfrak{m}}^{\otimes w})$ is the cohomology of the complex
\begin{equation}\label{cohomExt}
\cdots\xleftarrow{1-t_{w-1}} \bar{\mathfrak{m}}^{\otimes w}\xleftarrow{N_{w-1}} \bar{\mathfrak{m}}^{\otimes w}\xleftarrow{1-t_{w-1}} \bar{\mathfrak{m}}^{\otimes w}\xleftarrow{N_{w-1}} \bar{\mathfrak{m}}^{\otimes w}\xleftarrow{1-t_{w-1}}\bar{\mathfrak{m}}^{\otimes w}\xleftarrow{} 0
\end{equation}\label{complex}
$\\*$yielding

\begin{equation}\label{ext0}
Ext_{0}^{k[C_{w}]}(k,\bar{\mathfrak{m}}^{\otimes w})\cong  \textrm{ker}(1-t_{w-1})
\end{equation}
\begin{equation}\label{ext1}
Ext_{1+ 2i}^{k[C_{w}]}(k,\bar{\mathfrak{m}}^{\otimes w})\cong\ker(\textrm{coker}(1-t_{w-1}) \xrightarrow{N_{w-1}} \ker(1-t_{w-1})) \textrm{ for }  i\geq 0 \> 
\end{equation}
\begin{equation}\label{ext2}
Ext_{2 + 2i}^{k[C_{w}]}(k,\bar{\mathfrak{m}}^{\otimes w})\cong\textrm{coker}(\textrm{coker}(1-t_{w-1}) \xrightarrow{N_{w-1}} \ker(1-t_{w-1})) \textrm{ for }  i\geq 0.\>
\end{equation}
We deduce
\begin{theorem}\label{HNCNW3}
Let $A = k[x_1,x_2,\hdots,x_d]/\mathfrak{m}^2$, where $k$ is any commutative unital ring, $\mathfrak{m}$ is the ideal $(x_1,x_2,\hdots,x_d)$, $\bar{\mathfrak{m}} = \mathfrak{m} / \mathfrak{m}^{2}$, $C_{w}= \langle \alpha  : \alpha^{(w)}=1  \rangle$, and $\bar{\mathfrak{m}}^{\otimes w}$ is a $k[C_{w}]$-module with the action
$$
\alpha(x_{n_{1}}\otimes\cdots\otimes x_{n_{w}}) = (-1)^{w-1}(x_{n_{w}}\otimes x_{n_{1}} \otimes\cdots\otimes x_{n_{w-1}}) \> .
$$
$\\*$ Then for $w = 0$

\begin{center}
$
(HC^{-})_{n}^{(0)}(A) \cong 
    \begin{dcases}
        k  & n \textrm{ even and } n \leq 0 \\
        0 & n \textrm{ else} \\
    \end{dcases}
$
\end{center}

\medskip

$\\*$ and for $w > 0$

\begin{center}
$
(HC^{-})_{n}^{(w)}(A) \cong 
    \begin{dcases} 
        0  & n > w \\
       Ext_{w-n}^{k[C_{w}]}(k,\bar{\mathfrak{m}}^{\otimes w}) & n \leq w\>  \\
    \end{dcases}
$
\end{center}
$\\*$ which can be rewritten as 

\begin{center}
$
(HC^{-})_{n}^{(w)}(A) \cong 
    \begin{dcases} 
        0  & n > w \\
       H^{w-n}(C_{w};\bar{\mathfrak{m}}^{\otimes w}) & n \leq w\>  \\
    \end{dcases}
$
\end{center}
$\\*$for the $C_{w}$ action on $\bar{\mathfrak{m}}^{\otimes w}$ explained above.

\end{theorem}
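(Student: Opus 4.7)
My plan is to deduce Theorem \ref{HNCNW3} as a direct identification between Equation (\ref{HNCNW2}) and the $\Ext$-cohomology already assembled in (\ref{ext0})--(\ref{ext2}). For the $w=0$ case, Equation (\ref{HCNW0}) already gives the conclusion, since $HH_*^{(0)}(A) = k$ concentrated in degree zero contributes a copy of $k$ in every even nonpositive total degree of the second-quadrant bicomplex (and zero in all other degrees).

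For $w>0$ I would set up the dictionary that total degree $n$ in the negative cyclic bicomplex corresponds to cohomological degree $w-n$ in the $\Ext$-complex (\ref{cohomExt}). Under the identification $\Hom_{k[C_w]}(k[C_w],\bar{\mathfrak{m}}^{\otimes w}) \cong \bar{\mathfrak{m}}^{\otimes w}$, the coboundaries $(1-\alpha)\circ(-)$ and $N\circ(-)$ become $1-t_{w-1}$ and $N_{w-1}$, so the cohomology of (\ref{cohomExt}) is exactly (\ref{ext0})--(\ref{ext2}). Matching term-by-term with (\ref{HNCNW2}): the slot $n=w$ (so $w-n=0$) gives $\ker(1-t_{w-1}) \cong \Ext^0$; the odd offsets $n = w-1-2i$ with $i \geq 0$ give $\ker\bigl(\coker(1-t_{w-1}) \xrightarrow{N_{w-1}} \ker(1-t_{w-1})\bigr) \cong \Ext^{1+2i}$; and the positive even offsets $n = w-2i$ with $i > 0$ give $\coker\bigl(\coker(1-t_{w-1}) \xrightarrow{N_{w-1}} \ker(1-t_{w-1})\bigr) \cong \Ext^{2i}$. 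These three cases exhaust $w-n \in \{0,1,2,\ldots\}$, yielding $(HC^-)_n^{(w)}(A) \cong \Ext^{w-n}_{k[C_w]}(k,\bar{\mathfrak{m}}^{\otimes w})$ for $n \leq w$, and zero for $n > w$.

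The group cohomology reformulation $H^{w-n}(C_w;\bar{\mathfrak{m}}^{\otimes w})$ then follows from the standard identity $H^n(G;M) \cong \Ext^n_{k[G]}(k,M)$ applied to $G = C_w$ and $M = \bar{\mathfrak{m}}^{\otimes w}$ with its given $C_w$-action. The one point that needs care — and is the only real obstacle — is the index/parity bookkeeping: since the negative cyclic bicomplex extends into the second quadrant while the $\Ext$-resolution is indexed in the standard cohomological direction, I would verify that the $d^2$-differential (induced by Connes' operator $B=(1-t_w)sN_{w-1}$ and reducing to $N_{w-1}\colon \coker(1-t_{w-1}) \to \ker(1-t_{w-1})$ on the normalized complex, exactly as in Section 3.2) aligns with the coboundary in (\ref{cohomExt}) under the correspondence $n \leftrightarrow w-n$. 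Once this is checked the rest is routine cross-referencing.
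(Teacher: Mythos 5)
Your proposal is correct and follows essentially the same route as the paper: derive Equation (\ref{HNCNW2}) from the second-quadrant spectral sequence, compute $\Ext_{*}^{k[C_{w}]}(k,\bar{\mathfrak{m}}^{\otimes w})$ by applying $\Hom_{k[C_{w}]}(-,\bar{\mathfrak{m}}^{\otimes w})$ to the periodic resolution (\ref{r}) to obtain (\ref{ext0})--(\ref{ext2}), and match term by term under the reindexing $n \leftrightarrow w-n$, with the group-cohomology restatement being the standard identification $H^{*}(C_{w};M)\cong \Ext^{*}_{k[C_{w}]}(k,M)$. The alignment of $\partial^{2}$ with $N_{w-1}\colon \coker(1-t_{w-1})\to\ker(1-t_{w-1})$ that you flag as the delicate point is exactly what the paper disposes of by citing the reasoning after Equation (\ref{HCNW}), so no new argument is needed there.
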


\subsection{Negative Cyclic Homology of $\mathbb{Q}[x_1,x_2,\hdots,x_d]/\mathfrak{m}^2$}\hfill

Since $\mathbb{Q}$ is a projective $\mathbb{Q}[C_{w}]$-module, we get that for $A = \mathbb{Q}[x_1,x_2,\hdots,x_d]/\mathfrak{m}^2$ and for $w = 0$

\begin{equation}
(HC^{-})_{n}^{(0)}(A) \cong 
    \begin{dcases}
        \mathbb{Q}  & n \textrm{ even and } n \leq 0  \\
        0 & \textrm{else} \\
    \end{dcases}
\end{equation}

\medskip

$\\*$ and for $w > 0$

\begin{equation}
(HC^{-})_{n}^{(w)}(A) \cong 
    \begin{dcases} 
       Hom_{\mathbb{Q}[C_{w}]}(\mathbb{Q},\bar{\mathfrak{m}}^{\otimes w})  & n = w \\
        0  & \textrm{else} \>. 
    \end{dcases}
\end{equation}

Since $Hom_{\mathbb{Q}[C_{w}]}(\mathbb{Q},\bar{\mathfrak{m}}^{\otimes w})$ is the set of $\mathbb{Q}[C_{w}]$-module homomorphisms $f:\mathbb{Q} \rightarrow \bar{\mathfrak{m}}^{\otimes w}$, we know any $f$ must satisfy 
$f(\alpha  q ) = \alpha f(q)$ for all $q \in \mathbb{Q}$, where $\alpha$ acts trivially on $q$ and
$\alpha(x_{n_{1}}\otimes\cdots\otimes x_{n_{w}}) = (-1)^{w-1}(x_{n_{w}}\otimes x_{n_{1}} \otimes\cdots\otimes x_{n_{w-1}})$ on $f(q)$. We can rewrite the action $\alpha f(q)$ as $t_{w-1}f(q)$. We then get $f(q) = t_{w-1} f(q)$ which can be rewritten as $(1-t_{w-1})f(q) = 0$. Therefore the set of possible $\mathbb{Q}[C_{w}]$-module homomorphisms $f:\mathbb{Q} \rightarrow \bar{\mathfrak{m}}^{\otimes w}$ are exactly in one to one correspondence with a choice of $f(1) \in \ker(1-t_{w-1})$, and
               $$\ker\big((1-t_{w-1}):\bar{\mathfrak{m}}_{\mathbb{Q}}^{\otimes w}\rightarrow \bar{\mathfrak{m}}_{\mathbb{Q}}^{\otimes w}\big) \cong         \displaystyle\bigoplus_{\substack{m \mid w \\ m \equiv w \Mod{2}}}\displaystyle\bigoplus_{\omega_{m,d}} \mathbb{Q}$$
by the same reasoning that will be used to prove Lemma \ref{ker1minustwforz}. We deduce

\begin{corollary}
Let $A = \mathbb{Q}[x_1,...,x_d]/\mathfrak{m}^2$ where $\mathfrak{m}$ is the ideal $(x_1,\hdots,x_d)$. Then for $w = 0$
\begin{center}
$
(HC^{-})_{n}^{(0)}(A) \cong 
    \begin{dcases}
        \mathbb{Q}  & n \textrm{ even and } n \leq 0  \\
        0 & \textrm{else} \\
    \end{dcases}
$
\end{center}
$\\*$ and for $w > 0$

\begin{center}
$
(HC^{-})_{n}^{(w)}(A) \cong 
    \begin{dcases} 
       \displaystyle\bigoplus_{\substack{m \mid w \\ m \equiv w \Mod{2}}}\displaystyle\bigoplus_{\omega_{m,d}} \mathbb{Q} & n = w \\
        0  & \textrm{else} \>
    \end{dcases}
$
\end{center}
where $\omega_{m,d} = \{\textrm{all cycle families of words length $m$ and cycle length $m$ in $x_1,\hdots,x_d$}\}$  has order \newline $\frac{1}{m}\sum_{i \vert m}\mu(m/i) d^{i}$.
\end{corollary}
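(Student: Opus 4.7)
The plan is to combine Theorem \ref{HNCNW3} with Maschke semisimplicity of $\mathbb{Q}[C_w]$. Since $|C_w| = w$ is invertible in $\mathbb{Q}$, the module $\mathbb{Q}$ is projective over $\mathbb{Q}[C_w]$, so $\Ext^i_{\mathbb{Q}[C_w]}(\mathbb{Q}, \bar{\mathfrak{m}}^{\otimes w}) = 0$ for all $i > 0$. Theorem \ref{HNCNW3} immediately yields the $w = 0$ line, and for $w > 0$ reduces $(HC^{-})_n^{(w)}(A)$ to a single possibly nonzero group in degree $n = w$:
$$(HC^{-})_w^{(w)}(A) \cong \Hom_{\mathbb{Q}[C_w]}\bigl(\mathbb{Q}, \bar{\mathfrak{m}}^{\otimes w}\bigr) \cong \ker\bigl((1-t_{w-1})\colon \bar{\mathfrak{m}}_{\mathbb{Q}}^{\otimes w} \to \bar{\mathfrak{m}}_{\mathbb{Q}}^{\otimes w}\bigr),$$
with all other degrees vanishing. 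The remaining task is to identify this kernel.

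Next I would decompose $\bar{\mathfrak{m}}_{\mathbb{Q}}^{\otimes w}$ as a direct sum over cycle families of tensor monomials (Definition \ref{cl}). Since $t_{w-1}$ permutes each cycle family within itself, the kernel splits as a corresponding direct sum. On a single cycle family of cycle length $m$ (so $m \mid w$ and $w = m\ell$), one works in the $m$-dimensional $\mathbb{Q}$-subspace spanned by the rotates $T_{w-1}^i v$ of a representative $v = (x_{k_1} \otimes \cdots \otimes x_{k_m})^{\otimes \ell}$. Using the identity $t_{w-1} = (-1)^{w-1} T_{w-1}$ together with $T_{w-1}^m = \id$, the equation $t_{w-1}u = u$ is equivalent to $T_{w-1} u = (-1)^{w-1} u$. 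The cyclic permutation $T_{w-1}$ on this subspace has $+1$-eigenspace always one-dimensional (spanned by $\sum_i T_{w-1}^i v$), while its $-1$-eigenspace is one-dimensional when $m$ is even and zero otherwise. Combined with $m \mid w$, this produces a one-dimensional contribution exactly when $m \equiv w \pmod 2$ (both $w$ and $m$ odd gives eigenvalue $+1$; both even gives eigenvalue $-1$ with $m$ even) and no contribution when $m \not\equiv w \pmod 2$.

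Summing the contributions, each cycle family in $\omega_{m,d}$ with $m \equiv w \pmod 2$ contributes one copy of $\mathbb{Q}$ to the kernel, giving the claimed decomposition. The Möbius count of $|\omega_{m,d}|$ is the same one recalled in the discussion preceding the statement. The principal obstacle is simply the parity bookkeeping in the second paragraph; the other ingredients (Theorem \ref{HNCNW3}, projectivity of $\mathbb{Q}$ over $\mathbb{Q}[C_w]$, and the cycle family decomposition already exploited in Section 5) are all in place.
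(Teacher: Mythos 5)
Your proposal is correct and follows essentially the same route as the paper: the paper likewise uses projectivity of $\mathbb{Q}$ over $\mathbb{Q}[C_w]$ to kill all degrees except $n=w$, identifies $\Hom_{\mathbb{Q}[C_w]}(\mathbb{Q},\bar{\mathfrak{m}}^{\otimes w})$ with $\ker(1-t_{w-1})$, and then computes that kernel cycle family by cycle family (the reasoning of Lemma \ref{ker1minustwforz}). Your eigenspace phrasing of the per-family computation (the $(-1)^{w-1}$-eigenspace of the order-$m$ cyclic permutation $T_{w-1}$) is just a clean repackaging of the paper's sum/alternating-sum analysis, and your parity bookkeeping matches.
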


\subsection{Negative Cyclic Homology of $\mathbb{Z}[x_1,x_2,\hdots,x_d]/\mathfrak{m}^2$}\hfill

To compute the negative cyclic homology of $\mathbb{Z}[x_1,x_2,\hdots,x_d]/\mathfrak{m}^2$, note the similarity between the complex in Equation (\ref{cohomExt}) and the complex in Equation (\ref{TORHomComplex}). In Section 5, we showed that

\begin{equation}\label{rel1}
    \ker\big(\textrm{coker}(1-t_{w-1}) \xrightarrow{N_{w-1}} \ker(1-t_{w-1})\big) \cong \frac{\big(\im\big((1-t_{w-1}): \bar{\mathfrak{m}}_{\mathbb{Q}}^{\otimes w}\rightarrow \bar{\mathfrak{m}}_{\mathbb{Q}}^{\otimes w}\big)\big) \cap \bar{\mathfrak{m}}_{\mathbb{Z}}^{\otimes w}}{\im\big((1-t_{w-1}):\bar{\mathfrak{m}}_{\mathbb{Z}}^{\otimes w}\rightarrow \bar{\mathfrak{m}}_{\mathbb{Z}}^{\otimes w}\big)}
\end{equation}
\begin{equation}\label{rel2}
    \textrm{coker}\big(\textrm{coker}(1-t_{w-1}) \xrightarrow{N_{w-1}} \ker(1-t_{w-1})\big)\cong         \frac{\big(\im(N_{w-1}: \bar{\mathfrak{m}}_{\mathbb{Q}}^{\otimes w}\rightarrow \bar{\mathfrak{m}}_{\mathbb{Q}}^{\otimes w})\big) \cap \bar{\mathfrak{m}}_{\mathbb{Z}}^{\otimes w}}{\im(N_{w-1}:\bar{\mathfrak{m}}_{\mathbb{Z}}^{\otimes w}\rightarrow \bar{\mathfrak{m}}_{\mathbb{Z}}^{\otimes w})}.
\end{equation}

Applying Theorem \ref{HNCNW3} we get
\begin{theorem}\label{applykertozandq2}
Let $A = \mathbb{Z}[x_1,x_2,\hdots,x_d]/\mathfrak{m}^2$, where $\mathfrak{m}$ is the ideal $(x_1,x_2,\hdots,x_d)$ and $\bar{\mathfrak{m}} = \mathfrak{m} / \mathfrak{m}^{2}$.
Then
\begin{center}
$
(HC^{-})_{n}^{(0)}(A) \cong 
    \begin{dcases}
        \mathbb{Z} & n \textrm{ even and } n \leq 0 \\
        0 & \textrm{else}\\
    \end{dcases}
$
\end{center}
and for $w > 0$
\begin{center}
$
(HC^{-})_{n}^{(w)}(A) \cong 
    \begin{dcases} 
       \>\>\>\>\>\>\>\>\>\>\>\>\>\>\>\>\>\>\>\>\>\>\>\>\> 0  & n > w \\
       \\
       \ker\big((1-t_{w-1}):\bar{\mathfrak{m}}_{\mathbb{Z}}^{\otimes w}\rightarrow \bar{\mathfrak{m}}_{\mathbb{Z}}^{\otimes w}\big) & n = w \\
       \\
        \frac{\big(\im(N_{w-1}: \bar{\mathfrak{m}}_{\mathbb{Q}}^{\otimes w}\rightarrow \bar{\mathfrak{m}}_{\mathbb{Q}}^{\otimes w})\big) \cap \bar{\mathfrak{m}}_{\mathbb{Z}}^{\otimes w}}{\im(N_{w-1}:\bar{\mathfrak{m}}_{\mathbb{Z}}^{\otimes w}\rightarrow \bar{\mathfrak{m}}_{\mathbb{Z}}^{\otimes w})} & n = w - 1 - 2i, \hspace{5mm} i\geq 0 \\
        \\
       \frac{\big(\im\big((1-t_{w-1}): \bar{\mathfrak{m}}_{\mathbb{Q}}^{\otimes w}\rightarrow \bar{\mathfrak{m}}_{\mathbb{Q}}^{\otimes w}\big)\big) \cap \bar{\mathfrak{m}}_{\mathbb{Z}}^{\otimes w}}{\im\big((1-t_{w-1}):\bar{\mathfrak{m}}_{\mathbb{Z}}^{\otimes w}\rightarrow \bar{\mathfrak{m}}_{\mathbb{Z}}^{\otimes w}\big)} & n = w - 2i, \hspace{11.5mm} i> 0 \> . \\
    \end{dcases}
$

\end{center}

\end{theorem}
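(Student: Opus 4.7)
The plan is to deduce the theorem as a direct consequence of Theorem \ref{HNCNW3} specialized to $k = \mathbb{Z}$, combined with the integral identifications already carried out during the cyclic homology computation of the previous section. No new homological algebra is required; the argument is essentially a combination of results already in hand.

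First I would handle the $w = 0$ case: the first formula in Theorem \ref{HNCNW3} specialized to $k = \mathbb{Z}$ gives exactly the claim. For $w > 0$, Theorem \ref{HNCNW3} provides $(HC^{-})_n^{(w)}(A) \cong \Ext^{w-n}_{\mathbb{Z}[C_w]}(\mathbb{Z}, \bar{\mathfrak{m}}_{\mathbb{Z}}^{\otimes w})$ for $n \leq w$, and zero for $n > w$. Equation (\ref{ext0}) immediately reads off the $n = w$ value as $\ker\big((1-t_{w-1}): \bar{\mathfrak{m}}_{\mathbb{Z}}^{\otimes w} \to \bar{\mathfrak{m}}_{\mathbb{Z}}^{\otimes w}\big)$, matching the second row of the claim.

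For the remaining rows, Equations (\ref{ext1}) and (\ref{ext2}) identify the higher Ext groups alternately with $\ker\big(N_{w-1}: \coker(1-t_{w-1}) \to \ker(1-t_{w-1})\big)$ and $\coker\big(N_{w-1}: \coker(1-t_{w-1}) \to \ker(1-t_{w-1})\big)$, indexed by the parity of $w - n$. I would then substitute the two isomorphisms (\ref{rel1}) and (\ref{rel2}), which were derived in Section 5 from Equations (\ref{hadtoremake1}) and (\ref{hadtoremake2}), to rewrite the kernel and cokernel modules as the integral quotients appearing in the theorem statement. Matching up the parity conditions $n = w - 1 - 2i$ (with $w - n$ odd) and $n = w - 2i$ for $i > 0$ (with $w - n$ even and positive) with the appropriate quotient finishes the argument.

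The main obstacle was already dispatched in Section 5, namely the integrality identifications such as $\ker\big((1-t_{w-1}): \bar{\mathfrak{m}}_{\mathbb{Z}}^{\otimes w} \to \bar{\mathfrak{m}}_{\mathbb{Z}}^{\otimes w}\big) = \big(\im(N_{w-1}: \bar{\mathfrak{m}}_{\mathbb{Q}}^{\otimes w} \to \bar{\mathfrak{m}}_{\mathbb{Q}}^{\otimes w})\big) \cap \bar{\mathfrak{m}}_{\mathbb{Z}}^{\otimes w}$ and the dual statement for $\ker(N_{w-1})$, both of which rest on the projectivity of $\mathbb{Q}$ over $\mathbb{Q}[C_w]$. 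With those identifications in hand, the present theorem reduces to pure bookkeeping; the only mild care needed is in assigning the correct parity case of the Ext description to each of the two integral quotients.
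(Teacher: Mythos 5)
Your proposal is correct and is essentially the paper's own proof: the paper obtains this theorem exactly by specializing Theorem \ref{HNCNW3} to $k=\mathbb{Z}$, reading off the Ext groups from Equations (\ref{ext0}), (\ref{ext1}), (\ref{ext2}), and substituting the identifications (\ref{rel1}) and (\ref{rel2}) inherited from Section 5. One substantive remark on the ``mild care'' you flag at the end: if you actually carry out that parity matching, then for $n=w-1-2i$ (so $w-n$ odd) Equation (\ref{ext1}) gives $\ker\big(\coker(1-t_{w-1})\xrightarrow{N_{w-1}}\ker(1-t_{w-1})\big)$, which by (\ref{rel1}) is the $(1-t_{w-1})$-quotient, while for $n=w-2i$ with $i>0$ (so $w-n$ even and positive) Equation (\ref{ext2}) and (\ref{rel2}) give the $N_{w-1}$-quotient. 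The displayed statement assigns these two quotients to the opposite cases, so as printed its last two lines appear to be transposed; the assignment your bookkeeping produces is the one consistent with Theorem \ref{NegCycZ} via Lemmas \ref{1} and \ref{2}, and with the dual-numbers case $d=1$. So your outline is sound, but executing it faithfully proves the corrected statement rather than the one as displayed.
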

$\\*$We can use Lemma \ref{1} and Lemma \ref{2} to understand the last two parts of the above equation for $(HC^{-})_{n}^{(w)}(A)$. However, we still need to compute $\ker\big((1-t_{w-1}):\bar{\mathfrak{m}}_{\mathbb{Z}}^{\otimes w}\rightarrow \bar{\mathfrak{m}}_{\mathbb{Z}}^{\otimes w}\big)$. 

\begin{lemma}\label{wel1nc}
For $w$ even, each cycle family of words of cycle length $m=1$ will contribute nothing to $\ker\big((1-t_{w-1}):\bar{\mathfrak{m}}_{\mathbb{Z}}^{\otimes w}\rightarrow \bar{\mathfrak{m}}_{\mathbb{Z}}^{\otimes w}\big)$.
\end{lemma}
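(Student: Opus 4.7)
The plan is to observe that the subspace of $\bar{\mathfrak{m}}_{\mathbb{Z}}^{\otimes w}$ spanned by a cycle family of cycle length $m=1$ is preserved by $t_{w-1}$, so the kernel decomposes according to cycle families, and I only need to analyze the restriction of $(1-t_{w-1})$ to one such summand at a time. A cycle length $m=1$ word is of the form $x_i^{\otimes w}$ for some $i \in \{1,\ldots,d\}$, so the cycle family has a single element and its $\mathbb{Z}$-span is $\mathbb{Z} \cdot x_i^{\otimes w} \cong \mathbb{Z}$.

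I would then compute $(1-t_{w-1})$ directly on this generator. Using the definition of $t_{w-1}$ from Equation (\ref{tn}), and the fact that $w$ is even so $(-1)^{w-1} = -1$,
$$
(1-t_{w-1})(x_i^{\otimes w}) \;=\; x_i^{\otimes w} - (-1)^{w-1} x_i^{\otimes w} \;=\; x_i^{\otimes w} + x_i^{\otimes w} \;=\; 2\,x_i^{\otimes w}.
$$
Thus on this $\mathbb{Z}$-summand the operator $(1-t_{w-1})$ is multiplication by $2$, which is injective on $\mathbb{Z}$. Hence no nonzero element of $\mathbb{Z} \cdot x_i^{\otimes w}$ lies in $\ker\big((1-t_{w-1}):\bar{\mathfrak{m}}_{\mathbb{Z}}^{\otimes w}\rightarrow \bar{\mathfrak{m}}_{\mathbb{Z}}^{\otimes w}\big)$, so this cycle family contributes nothing to the kernel. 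There is no real obstacle here; the only subtlety is the sign conversion from $t_{w-1}$ to $T_{w-1}$ (cf.\ Equation (\ref{tnandTn})), which is precisely what flips the sign in the even-$w$ case and makes the computation yield $2\,x_i^{\otimes w}$ rather than $0$.
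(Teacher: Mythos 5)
Your proof is correct and matches the paper's argument exactly: the paper's proof is the same one-line computation $(1-t_{w-1})(x_i^{\otimes w}) = 2\,x_i^{\otimes w}$, with the injectivity of multiplication by $2$ on $\mathbb{Z}$ left implicit. Your added remarks on the cycle-family decomposition and the sign convention are just a more explicit write-up of the same reasoning.
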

\begin{proof}
For $w$ even and $i \in \{1,2,\hdots,d\}$, 
$$
(1-t_{w-1})(x_{i}^{\otimes w})= x_{i}^{\otimes w} + x_{i}^{\otimes w} = 2(x_{i}^{\otimes w}).
$$
\end{proof}
\begin{lemma}\label{wol1nc}
For $w$ odd, each cycle family of words of cycle length $m=1$ will contribute a copy of $\mathbb{Z}$ to $\ker\big((1-t_{w-1}):\bar{\mathfrak{m}}_{\mathbb{Z}}^{\otimes w}\rightarrow \bar{\mathfrak{m}}_{\mathbb{Z}}^{\otimes w}\big)$.
\end{lemma}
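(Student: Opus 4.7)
The plan is simply to compute the action of $(1-t_{w-1})$ on the single generator of such a cycle family and observe that it is zero, so the generator freely contributes to the kernel.

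First I would note that a tensor monomial of cycle length $m=1$ in $\bar{\mathfrak{m}}_{\mathbb{Z}}^{\otimes w}$ must be of the form $x_{i}^{\otimes w}$ for some $i \in \{1, \ldots, d\}$, since $m=1$ means $T_{w-1}(x_{j_{1}}\otimes\cdots\otimes x_{j_{w}}) = x_{j_{1}}\otimes\cdots\otimes x_{j_{w}}$, forcing all indices to coincide. In particular, the cycle family of $x_{i}^{\otimes w}$ consists of this single monomial, which spans a free rank-one $\mathbb{Z}$-submodule of $\bar{\mathfrak{m}}_{\mathbb{Z}}^{\otimes w}$.

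Next I would apply the definition of $t_{w-1}$ from Equation~(\ref{tn}): for $w$ odd we have $(-1)^{w-1} = 1$, so
$$
t_{w-1}(x_{i}^{\otimes w}) = (-1)^{w-1}\,x_{i}^{\otimes w} = x_{i}^{\otimes w},
$$
and therefore $(1-t_{w-1})(x_{i}^{\otimes w}) = 0$. Since $(1-t_{w-1})$ preserves cycle families, the span of $\{x_{i}^{\otimes w}\}$ intersected with the kernel of $(1-t_{w-1})$ is exactly $\mathbb{Z} \cdot x_{i}^{\otimes w} \cong \mathbb{Z}$, giving the claimed contribution.

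There is no real obstacle here; this is the mirror computation to Lemma~\ref{wocl1} and Lemma~\ref{wocl1.m} for the kernel rather than the image or cokernel. The argument fits in two lines and the only thing to be careful about is that the assertion is per cycle family (each choice of $i \in \{1,\dots,d\}$ gives a separate summand), which matches the $m=1$ term of $\omega_{m,d}$ having $d$ elements.
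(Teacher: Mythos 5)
Your proposal is correct and is essentially the paper's own argument: the paper likewise computes $(1-t_{w-1})(x_{i}^{\otimes w}) = x_{i}^{\otimes w} - x_{i}^{\otimes w} = 0$ for $w$ odd and concludes that each such singleton cycle family contributes a copy of $\mathbb{Z}$ to the kernel. The extra remarks you make about cycle families being preserved and the count matching $\omega_{1,d}$ are consistent with the paper but not needed beyond the one-line computation.
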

\begin{proof}
For $w$ odd and $i \in \{1,2,\hdots,d\}$, 
$$
(1-t_{w-1})(x_{i}^{\otimes w})= x_{i}^{\otimes w} - x_{i}^{\otimes w} = 0.
$$
\end{proof}
\begin{lemma}\label{wolonc}
For $w>1$ odd, each cycle family of words of cycle length $m>1$ will contribute a copy of $\mathbb{Z}$ to $\ker\big((1-t_{w-1}):\bar{\mathfrak{m}}_{\mathbb{Z}}^{\otimes w}\rightarrow \bar{\mathfrak{m}}_{\mathbb{Z}}^{\otimes w}\big)$.
\end{lemma}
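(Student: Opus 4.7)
The plan is to reduce the computation to a single cycle family, where $(1-t_{w-1})$ becomes a concrete $m\times m$ integer matrix whose kernel over $\mathbb{Z}$ is easy to read off. First I would observe that since $m \mid w$ and $w$ is odd, $m$ must also be odd, so the sign $(-1)^{w-1} = 1$ disappears and $t_{w-1}$ agrees with the plain rotation $T_{w-1}$ on tensor monomials. The $\mathbb{Z}$-module $\bar{\mathfrak{m}}_{\mathbb{Z}}^{\otimes w}$ splits as a direct sum of the $\mathbb{Z}$-spans of individual cycle families, and $(1-t_{w-1})$ preserves each summand. Hence its kernel also splits accordingly, and it suffices to treat one cycle family at a time.

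Next, fix a tensor monomial $(x_{k_1}\otimes\cdots\otimes x_{k_m})^{\otimes \ell}$ of cycle length $m > 1$, with $w = \ell m$. Its cycle family has exactly $m$ elements, namely $T_{w-1}^{j}\bigl((x_{k_1}\otimes\cdots\otimes x_{k_m})^{\otimes \ell}\bigr)$ for $j = 0, 1, \ldots, m-1$, giving a $\mathbb{Z}$-basis of a free $\mathbb{Z}$-submodule of rank $m$. In this basis the restriction of $(1-t_{w-1})$ is precisely the $m\times m$ matrix already displayed in the proof of Lemma~\ref{woclo}, with $1$'s on the diagonal, $-1$'s on the subdiagonal, and a $-1$ in the top-right corner.

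Then I would compute the kernel of this matrix over $\mathbb{Z}$ directly. By the row reduction performed in Lemma~\ref{woclo}, the matrix has rank $m-1$, so its kernel is a rank $1$ direct summand of $\mathbb{Z}^m$. Summing all columns of the original matrix yields the zero vector, so the all-ones vector $(1,1,\ldots,1)^{T}$ lies in the kernel; since any element of the kernel is a multiple of this primitive vector (as a rank-one saturated subgroup of $\mathbb{Z}^m$), the kernel is exactly $\mathbb{Z}\cdot(1,1,\ldots,1)^{T}$. Translating back, this contributes the free abelian group $\mathbb{Z}$ generated by $\sum_{j=0}^{m-1} T_{w-1}^{j}\bigl((x_{k_1}\otimes\cdots\otimes x_{k_m})^{\otimes \ell}\bigr)$.

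There is no serious obstacle here, since all the combinatorial and matrix-theoretic work was already carried out in Lemma~\ref{woclo}; the mild point to be careful about is confirming that the kernel is genuinely a copy of $\mathbb{Z}$ (i.e.\ has no torsion and is saturated in $\mathbb{Z}^m$), which follows from the integer row reduction exhibiting rank $m-1$. The statement of the lemma then follows by summing these $\mathbb{Z}$ contributions over all cycle families of cycle length $m>1$.
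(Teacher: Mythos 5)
Your proof is correct and follows essentially the same route as the paper: restrict $(1-t_{w-1})$ to the rank-$m$ span of a single cycle family (noting $t_{w-1}=T_{w-1}$ since $m$ and $w$ are odd) and identify the kernel as the copy of $\mathbb{Z}$ generated by the sum of the family's members. The paper simply asserts that only multiples of this sum lie in the kernel, whereas you justify it explicitly via the matrix and row reduction from Lemma~\ref{woclo}, including the saturation point; this is a welcome amplification rather than a different argument.
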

\begin{proof}
First note, if $m|w$ and $w$ is odd, $m$ is also odd. There exists an $\ell$ such that $w=m\cdot \ell$. Again, there are $m$ monomials of length $w$ in the cycle family of this word. Consider the image of these words under the map: $( 1-t_{w-1}): \bar{\mathfrak{m}}_{\mathbb{Z}}^{\otimes w}\rightarrow \bar{\mathfrak{m}}_{\mathbb{Z}}^{\otimes w}$. The only linear combinations of words in this cycle family that will go to $0$ under the map $1-t_{w-1}$ must be a multiple of the sum of all the words in this cycle family. 
\end{proof}
\begin{lemma}\label{welenc}
For $w>1$ even, each cycle family of words of cycle length $m$ also even will contribute a copy of $\mathbb{Z}$ to $\ker\big((1-t_{w-1}):\bar{\mathfrak{m}}_{\mathbb{Z}}^{\otimes w}\rightarrow \bar{\mathfrak{m}}_{\mathbb{Z}}^{\otimes w}\big)$.
\end{lemma}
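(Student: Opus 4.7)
The plan is to imitate, in simplified form, the argument used in Lemma \ref{wecle}: reduce to one cycle family at a time, write the map $1-t_{w-1}$ on that family as an explicit $m\times m$ matrix, and then read off the kernel by inspection. Since $t_{w-1}$ permutes each cycle family (up to sign) into itself, the kernel
$\ker\bigl((1-t_{w-1}):\bar{\mathfrak{m}}_{\mathbb{Z}}^{\otimes w}\to\bar{\mathfrak{m}}_{\mathbb{Z}}^{\otimes w}\bigr)$
splits as a direct sum indexed by cycle families, so it is enough to determine the contribution of a single family.

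Fix a representative monomial $(x_{k_1}\otimes\cdots\otimes x_{k_m})^{\otimes\ell}$ of cycle length exactly $m$, with $w=m\ell$ and both $w,m$ even. Let $e_1,\ldots,e_m$ denote its $m$ distinct cyclic rotates, labelled so that $T_{w-1}(e_i)=e_{i+1}$ with indices read mod $m$. By the relation \eqref{tnandTn}, $t_{w-1}=(-1)^{w-1}T_{w-1}=-T_{w-1}$ since $w$ is even, so on the $\mathbb{Z}$-span of this family the operator $1-t_{w-1}$ coincides with $1+T_{w-1}$, which is precisely the matrix already displayed in the proof of Lemma \ref{wecle}.

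Next I would solve $(1+T_{w-1})v=0$ directly on this span. Writing $v=\sum_{i=1}^{m}\alpha_i e_i$, the condition becomes the cyclic recurrence $\alpha_i+\alpha_{i-1}=0$ for all $i\pmod{m}$, i.e. $\alpha_i=(-1)^{i-1}\alpha_1$. Because $m$ is even, the cyclic compatibility condition $\alpha_1=(-1)^{m}\alpha_1$ is automatically satisfied, and the kernel is the rank-one free $\mathbb{Z}$-module generated by
$$e_1-e_2+e_3-\cdots+e_{m-1}-e_m.$$
Summing over cycle families of cycle length $m$ then gives the claimed copy of $\mathbb{Z}$ from each.

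The only real subtlety is the parity check: had $m$ been odd the same recurrence would have forced $\alpha_1=-\alpha_1$ and killed the kernel, so the hypothesis that $m$ is even is being used in an essential way at exactly this point. Apart from that, the argument is a routine variant of the linear-algebra computation already carried out in Lemma \ref{wecle}, so no new obstacle appears.
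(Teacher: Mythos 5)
Your proof is correct and follows essentially the same route as the paper: both reduce to a single cycle family and identify the kernel there as the rank-one $\mathbb{Z}$-module generated by the alternating sum of the $m$ rotates, with the evenness of $m$ ensuring the alternating signs close up consistently around the cycle. Your version is in fact slightly more careful than the paper's, since you derive the recurrence $\alpha_i+\alpha_{i-1}=0$ explicitly rather than simply asserting that only multiples of the alternating sum lie in the kernel.
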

\begin{proof}
Again, there are $m$ monomials of length $w$ in the cycle family of this word. Consider the image of these words under the map: $\big( 1-t_{w-1} \big): \bar{\mathfrak{m}}_{\mathbb{Z}}^{\otimes w}\rightarrow \bar{\mathfrak{m}}_{\mathbb{Z}}^{\otimes w}$. Note the following:
$$(1-t_{w-1})\big((x_{k_{1}}\otimes\cdots\otimes x_{k_{m}})^{\otimes \ell}-(x_{k_{m}}\otimes\cdots\otimes x_{k_{m-1}})^{\otimes \ell} + (x_{k_{m-1}}\otimes\cdots\otimes x_{k_{m-2}})^{\otimes \ell}- \ldots
$$
$$+(x_{k_{3}}\otimes\cdots\otimes x_{k_{2}})^{\otimes \ell} -(x_{k_{2}}\otimes\cdots\otimes x_{k_{1}})^{\otimes \ell}\big)=0
$$
$\\*$The only linear combinations of words in this cycle family that will go to $0$ under the map $1-t_{w-1}$ must be a multiple of the alternating sum (in the above order) of all the words in this cycle family which works in this case particularly because $m$ is even and there are $m$ words in this cycle family. Therefore, any element of $\ker(1-t_{w-1})$ that comes from this cycle family must be generated by the alternating sum (in the above order) of all members of this cycle family.
\end{proof}
\begin{lemma}\label{welonc}
For $w>1$ even, each cycle family of words of cycle length $m>1$ with $m$ odd will contribute nothing to $\ker\big((1-t_{w-1}):\bar{\mathfrak{m}}_{\mathbb{Z}}^{\otimes w}\rightarrow \bar{\mathfrak{m}}_{\mathbb{Z}}^{\otimes w}\big)$.
\end{lemma}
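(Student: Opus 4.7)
The plan is to compute directly the kernel of $(1-t_{w-1})$ restricted to the $\mathbb{Z}$-span of a single cycle family, reusing the matrix setup already used in Lemma \ref{weclo}. Writing $w=m\ell$ (with $\ell$ necessarily even, since $w$ is even and $m$ is odd) and enumerating the cycle family of $(x_{k_1}\otimes\cdots\otimes x_{k_m})^{\otimes \ell}$ as $v_0,v_1,\ldots,v_{m-1}$ with $T_{w-1}(v_i)=v_{i+1 \bmod m}$, the identity $t_{w-1}=(-1)^{w-1}T_{w-1}=-T_{w-1}$ (valid because $w$ is even) yields $(1-t_{w-1})(v_i)=v_i+v_{i+1 \bmod m}$. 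Hence $\sum_i a_i v_i \mapsto \sum_j (a_j+a_{j-1 \bmod m})\, v_j$.

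From here the main step is elementary. For $\sum_i a_i v_i$ to lie in the kernel one needs $a_j=-a_{j-1 \bmod m}$ for every $j$. Iterating this recursion gives $a_j=(-1)^j a_0$, and the cyclic closure condition $a_0=a_m=(-1)^m a_0=-a_0$ (using that $m$ is odd) forces $2a_0=0$. Over $\mathbb{Z}$ this gives $a_0=0$ and therefore every $a_j=0$, so the cycle family contributes nothing to $\ker\bigl((1-t_{w-1})\colon\bar{\mathfrak{m}}_{\mathbb{Z}}^{\otimes w}\to\bar{\mathfrak{m}}_{\mathbb{Z}}^{\otimes w}\bigr)$, as claimed.

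Equivalently, I could simply cite the row-reduction already carried out in the proof of Lemma \ref{weclo}, which in the present case ($w$ even, $m$ odd) showed that the $m\times m$ integer matrix $M$ of $(1-t_{w-1})$ on this basis has full rank $m$; an $m\times m$ integer matrix with nonzero determinant is automatically injective on $\mathbb{Z}^m$, giving the same conclusion with no further calculation. The only point requiring care is the sign conversion $t_{w-1}=-T_{w-1}$ coming from $w-1$ being odd; once that is in place there is no serious obstacle, and this is by far the shortest of the five kernel-computation lemmas in this subsection.
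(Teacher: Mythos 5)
Your proof is correct and follows essentially the same route as the paper: the paper's proof observes that any kernel element from this cycle family would have to be a multiple of the alternating sum, which fails to close up cyclically because $m$ is odd, and your recursion $a_j=-a_{j-1\bmod m}$, forcing $a_j=(-1)^ja_0$ and $2a_0=0$, is just the explicit form of that observation. The sign bookkeeping $t_{w-1}=-T_{w-1}$ for $w$ even is handled correctly, and your alternative appeal to the full rank of the matrix $M$ from Lemma \ref{weclo} is a valid shortcut consistent with what the paper already established there.
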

\begin{proof}
This is almost identical to the proof above of Lemma \ref{welenc}, except here the alternating sum of all the words of this cycle family will not go to $0$ because there are $m$ (which is odd) words in this cycle family. In fact, there is only the trivial linear combination of the above words that will be in the $\ker((1-t_{w-1})$. So the words in this cycle family will contribute nothing to $\ker((1-t_{w-1})$.
\end{proof}
Lemmas \ref{wel1nc}, \ref{wol1nc}, \ref{wolonc}, \ref{welenc}, and \ref{welonc} give the following lemma.
\begin{lemma}\label{ker1minustwforz}
               $$\ker\big((1-t_{w-1}):\bar{\mathfrak{m}}_{\mathbb{Z}}^{\otimes w}\rightarrow \bar{\mathfrak{m}}_{\mathbb{Z}}^{\otimes w}\big) \cong         \displaystyle\bigoplus_{\substack{m \mid w \\ m \equiv w \Mod{2}}}\displaystyle\bigoplus_{\omega_{m,d}} \mathbb{Z}$$
\end{lemma}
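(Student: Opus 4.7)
The plan is to assemble the five preceding case-by-case lemmas into a single clean statement, after first making explicit the direct-sum decomposition of $\bar{\mathfrak{m}}_{\mathbb{Z}}^{\otimes w}$ by cycle families. The starting observation I would record is that $\bar{\mathfrak{m}}_{\mathbb{Z}}^{\otimes w}$ is freely generated over $\mathbb{Z}$ by the tensor monomials $x_{j_{1}}\otimes\cdots\otimes x_{j_{w}}$, and that $t_{w-1}$ sends each such monomial to $\pm$ another monomial lying in the same cycle family. Consequently the $\mathbb{Z}$-span $\mathbb{Z}\langle F\rangle$ of any single cycle family $F$ is invariant under $1-t_{w-1}$, and
$$\ker\big((1-t_{w-1}):\bar{\mathfrak{m}}_{\mathbb{Z}}^{\otimes w}\rightarrow \bar{\mathfrak{m}}_{\mathbb{Z}}^{\otimes w}\big) \cong \bigoplus_{F} \ker\big((1-t_{w-1})|_{\mathbb{Z}\langle F\rangle}\big),$$
the sum running over all cycle families of words of length $w$ in $x_{1},\ldots,x_{d}$.

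Next I would partition the cycle families by their cycle length $m$, which necessarily divides $w$. For each divisor $m$ of $w$, the number of cycle families of cycle length $m$ is exactly $|\omega_{m,d}|$, so the total contribution from cycle length $m$ is $|\omega_{m,d}|$ copies of whatever a single cycle family of that length contributes. At this point the five previously established lemmas exhaust every possibility for the pair (parity of $w$, parity of $m$): Lemma \ref{wel1nc} handles $w$ even with $m=1$, Lemma \ref{wol1nc} handles $w$ odd with $m=1$, Lemma \ref{wolonc} handles $w$ odd with $m>1$ (and an odd $w$ forces $m$ odd automatically), Lemma \ref{welenc} handles $w$ even with $m$ even, and Lemma \ref{welonc} handles $w$ even with $m>1$ odd. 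Their combined conclusion is that a cycle family contributes a single copy of $\mathbb{Z}$ exactly when $m\equiv w\Mod{2}$ and contributes nothing otherwise.

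Reassembling, the families with $m\not\equiv w\Mod{2}$ drop out and the surviving families give
$$\ker\big((1-t_{w-1}):\bar{\mathfrak{m}}_{\mathbb{Z}}^{\otimes w}\rightarrow \bar{\mathfrak{m}}_{\mathbb{Z}}^{\otimes w}\big) \cong \bigoplus_{\substack{m \mid w \\ m \equiv w \Mod{2}}}\bigoplus_{\omega_{m,d}} \mathbb{Z},$$
which is precisely the claim. There is essentially no hard step: the substantive work has already been done in the five parity-specific lemmas, and all that remains is bookkeeping — checking that the five cases genuinely exhaust the divisor pairs $(w,m)$ with $m\mid w$, and that each family's contribution is correctly identified as $\mathbb{Z}$ or $0$ according to whether $m$ and $w$ share parity. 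The main pitfall to watch for is the easily-overlooked remark that when $w$ is odd every divisor $m\mid w$ is automatically odd, so no "missing case" of $w$ odd, $m$ even ever arises.
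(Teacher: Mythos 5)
Your proposal is correct and takes essentially the same route as the paper: the paper derives this lemma by simply combining Lemmas \ref{wel1nc}, \ref{wol1nc}, \ref{wolonc}, \ref{welenc}, and \ref{welonc} across the cycle-family decomposition, exactly as you do. Your write-up merely makes explicit the bookkeeping (the splitting of the kernel over cycle families and the exhaustiveness of the five parity cases) that the paper leaves implicit.
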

Summarizing Lemma \ref{1}, Lemma \ref{2}, Lemma \ref{ker1minustwforz}, and Theorem \ref{applykertozandq2}, we get 
\begin{theorem}\label{NegCycZ}

Let $A = \mathbb{Z}[x_1,x_2,\hdots,x_d]/\mathfrak{m}^2$, where $\mathfrak{m}$ is the ideal $(x_1,x_2,\hdots,x_d)$ and $\bar{\mathfrak{m}} = \mathfrak{m} / \mathfrak{m}^{2}$.
Then
\begin{center}
$
(HC^{-})_{n}^{(0)}(A) \cong 
    \begin{dcases}
        \mathbb{Z} & n \textrm{ even and } n \leq 0 \\
        0 & \textrm{else}\\
    \end{dcases}
$
\end{center}
and for $w > 0$

\begin{center}
$
(HC^{-})_{n}^{(w)}(A) \cong 
    \begin{dcases} 
       \>\>\>\>\>\>\>\>\>\>\>\>\>\>\>\>\>\>\>\>\>\>\>\>\> 0  & n > w \\
       \\
       \displaystyle\bigoplus_{\substack{m \mid w \\ m \equiv w \Mod{2}}}\displaystyle\bigoplus_{\omega_{m,d}} \mathbb{Z} & n = w \\
        \\
         \displaystyle\bigoplus_{\substack{m \mid w \\ m \not\equiv w \Mod{2}}}\displaystyle\bigoplus_{\omega_{m,d}} \mathbb{Z}/2
         & n = w -1 - 2i,\hspace{5mm} i\geq 0 \\
        \\
      \displaystyle\bigoplus_{\substack{m \mid w \\ m \equiv w \Mod{2}}}\displaystyle\bigoplus_{\omega_{m,d}} \mathbb{Z}/\big(\tfrac{w}{m}\big)
       & n = w -2i,\hspace{11.5mm} i> 0  \\
    \end{dcases}
$

\end{center}
$\\*$where $\omega_{m,d} = \{\textrm{all cycle families of words length $m$ and cycle length $m$ in $x_1,\hdots,x_d$}\}$ has order \newline $\frac{1}{m}\sum_{i \vert m}\mu(m/i) d^{i}$.
\end{theorem}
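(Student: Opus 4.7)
The plan is to assemble the final computation by direct substitution of the three already-established lemmas into the case breakdown of Theorem \ref{applykertozandq2}. For the weight zero case $w=0$, the statement for $(HC^-)_n^{(0)}(A)$ is literal repetition of Theorem \ref{applykertozandq2} restricted to $k=\mathbb{Z}$, so nothing new is required.

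For $w>0$, I would proceed case by case on $n$. The case $n>w$ is immediate from Theorem \ref{applykertozandq2}. For $n=w$, Theorem \ref{applykertozandq2} identifies $(HC^-)_w^{(w)}(A)$ with $\ker\!\bigl((1-t_{w-1})\colon \bar{\mathfrak{m}}_{\mathbb{Z}}^{\otimes w}\to\bar{\mathfrak{m}}_{\mathbb{Z}}^{\otimes w}\bigr)$, and Lemma \ref{ker1minustwforz} rewrites this kernel as $\bigoplus_{m\mid w,\, m\equiv w \Mod 2}\bigoplus_{\omega_{m,d}}\mathbb{Z}$. For $n=w-1-2i$ with $i\geq 0$, Theorem \ref{applykertozandq2} gives the quotient $\bigl((\im N_{w-1})_{\mathbb{Q}}\cap \bar{\mathfrak{m}}_{\mathbb{Z}}^{\otimes w}\bigr)/\im (N_{w-1})_{\mathbb{Z}}$, and Lemma \ref{1} identifies this with $\bigoplus_{m\mid w,\, m\equiv w\Mod 2}\bigoplus_{\omega_{m,d}}\mathbb{Z}/(w/m)$. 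For $n=w-2i$ with $i>0$, Theorem \ref{applykertozandq2} gives the quotient $\bigl((\im(1-t_{w-1}))_{\mathbb{Q}}\cap \bar{\mathfrak{m}}_{\mathbb{Z}}^{\otimes w}\bigr)/\im(1-t_{w-1})_{\mathbb{Z}}$, and Lemma \ref{2} identifies this with $\bigoplus_{m\mid w,\, m\not\equiv w\Mod 2}\bigoplus_{\omega_{m,d}}\mathbb{Z}/2$.

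Matching these identifications to the four branches displayed in the statement of Theorem \ref{NegCycZ} completes the proof; the counting formula $|\omega_{m,d}|=\tfrac{1}{m}\sum_{i\mid m}\mu(m/i)d^i$ has already been justified via Möbius inversion on the set of length-$m$ words in $\{x_1,\ldots,x_d\}$ after subtracting off proper periodic repeats.

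Since all three input lemmas are in place, there is no genuine obstacle in this proof; the main care required is bookkeeping. Specifically, I would be careful to match the parity conditions in Lemmas \ref{1} and \ref{2} with the parity of $w-1-2i$ versus $w-2i$, and to verify that the isomorphisms from Lemmas \ref{1} and \ref{2} are natural enough in the cycle-family decomposition that they can be summed over all cycle families contributing to $\bar{\mathfrak{m}}_{\mathbb{Z}}^{\otimes w}$. The one place where an error could slip in is confusing which branch of the $n$ parity corresponds to $\im N$ versus $\im(1-t)$ in the $(HC^-)$ bicomplex as compared with $HC$; I would double-check this by reading off the arrows in Definition \ref{negative} and confirming that the differential at level $n=w$ going down is $N_{w-1}$, which pins down the alternation.
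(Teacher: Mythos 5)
Your overall strategy is the paper's own: the paper proves Theorem \ref{NegCycZ} by simply summarizing Theorem \ref{applykertozandq2} together with Lemmas \ref{1}, \ref{2}, and \ref{ker1minustwforz}. The $w=0$, $n>w$, and $n=w$ cases of your argument are fine.

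However, there is a genuine problem in your two remaining cases, and it sits exactly at the spot you flagged but did not actually check. You substitute the branches of Theorem \ref{applykertozandq2} literally: for $n=w-1-2i$ you take the quotient $\bigl(\im(N_{w-1})_{\mathbb{Q}}\cap\bar{\mathfrak{m}}_{\mathbb{Z}}^{\otimes w}\bigr)/\im(N_{w-1})_{\mathbb{Z}}$ and get $\bigoplus_{m\equiv w}\bigoplus_{\omega_{m,d}}\mathbb{Z}/(w/m)$ via Lemma \ref{1}, and for $n=w-2i$, $i>0$, you take the $(1-t_{w-1})$-quotient and get $\bigoplus_{m\not\equiv w}\bigoplus_{\omega_{m,d}}\mathbb{Z}/2$ via Lemma \ref{2}. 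These are the \emph{opposite} of the groups displayed in the corresponding branches of Theorem \ref{NegCycZ}, so your claimed ``matching'' does not in fact occur and the proof does not close. The source of the discrepancy is that Theorem \ref{applykertozandq2} as printed has its last two branches interchanged relative to Equation (\ref{HNCNW2}) combined with Equations (\ref{rel1}) and (\ref{rel2}): from the Ext description, $(HC^{-})_{n}^{(w)}\cong \Ext_{w-n}^{\mathbb{Z}[C_{w}]}(\mathbb{Z},\bar{\mathfrak{m}}^{\otimes w})$, and the cochain complex (\ref{cohomExt}) gives $\Ext^{1+2i}\cong\ker(N_{w-1})_{\mathbb{Z}}/\im(1-t_{w-1})_{\mathbb{Z}}\cong\bigl(\im(1-t_{w-1})_{\mathbb{Q}}\cap\bar{\mathfrak{m}}_{\mathbb{Z}}^{\otimes w}\bigr)/\im(1-t_{w-1})_{\mathbb{Z}}$ in the odd degrees $w-n=1+2i$ (so $n=w-1-2i$ gets the $\mathbb{Z}/2$ summands of Lemma \ref{2}), while $\Ext^{2+2i}\cong\ker(1-t_{w-1})_{\mathbb{Z}}/\im(N_{w-1})_{\mathbb{Z}}\cong\bigl(\im(N_{w-1})_{\mathbb{Q}}\cap\bar{\mathfrak{m}}_{\mathbb{Z}}^{\otimes w}\bigr)/\im(N_{w-1})_{\mathbb{Z}}$ in the positive even degrees (so $n=w-2i$, $i>0$, gets the $\mathbb{Z}/(w/m)$ summands of Lemma \ref{1}). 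This is consistent with the cyclic and periodic results (Theorems \ref{Zcyclic} and Corollary \ref{Zperiodic}), where degrees of the same parity as $w$ carry the $\mathbb{Z}/(w/m)$'s and degrees of the opposite parity carry the $\mathbb{Z}/2$'s. To repair your argument you must perform the parity check you deferred, correct the assignment of the two quotients to the two residue classes of $n$, and then the substitution of Lemmas \ref{1} and \ref{2} does produce exactly the statement of Theorem \ref{NegCycZ}.
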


\section{Periodic Cyclic Homology of $k[x_1,x_2,\hdots,x_d]/\mathfrak{m}^2$}
We now want to calculate $HP_{*}(A)$ for $A = k[x_1,x_2,\hdots,x_d]/\mathfrak{m}^2$ by computing the total homology of the double complex in Definition \ref{periodic}. As before, we break the calculation down by weight,

\begin{center}
    $HP_{*}(A) \cong \displaystyle\bigoplus_{w=0}^{\infty}HP_{*}^{(w)}(A)$
\end{center}

$\\*$and as before, the case $w=0$ is easy yielding
\begin{equation}
HP_{n}^{(0)}(A) \cong 
    \begin{dcases}
        k  & n \textrm{ even} \\
        0 &  \textrm{else} \\
    \end{dcases}.
\end{equation}

$\\*$For $w>0$, we will again get $E^{1}\cong E^{2}$ and $E^{3}\cong E^{\infty}$ so
\begin{equation}\label{HPNW}
HP_{n}^{(w)}(A) \cong 
    \begin{dcases}
        \textrm{coker}(\partial^{2}) & n = w + 2i  \textrm{ for } i\in\mathbb{Z}\\
        \ker(\partial^{2}) & n = w + 2i +1  \textrm{ for } i\in\mathbb{Z} \> . \\
    \end{dcases}
\end{equation}

$\\*$Following the same reasoning that appears after Equation (\ref{HCNW}), we can this as

\begin{equation}\label{HPNW2}
   HP_{n}^{(w)}(A) \cong 
    \begin{dcases}
        \textrm{coker}\big(\textrm{coker}(1-t_{w-1}) \xrightarrow{N_{w-1}} \ker(1-t_{w-1})\big) & n = w + 2i  \textrm{ for } i\in\mathbb{Z}\\
        \ker\big(\textrm{coker}(1-t_{w-1}) \xrightarrow{N_{w-1}} \ker(1-t_{w-1})\big) & n = w + 2i +1  \textrm{ for } i\in\mathbb{Z} \> . \\
    \end{dcases}
\end{equation}
\begin{definition}
The \textbf{Tate cohomology groups} $\hat{H}^{n}(G,A)$ of a discrete group $G$ with coefficients in a $\mathbb{Z}[G]$-module $A$ are defined to be 
$$
   \hat{H}^{n}(G,A) = 
    \begin{dcases}
        H^{n}(G,A) & n \geq 1\\
        \coker N & n = 0\\
        \ker N & n = -1\\
        H_{-(n+1)}(G,A) & n \leq -2\\
    \end{dcases}
$$
\end{definition}

Comparing Equation (\ref{HPNW2}) with Equations (\ref{HCNW2}) and (\ref{HNCNW2}), and then comparing the group homology and cohomology results from Theorems \ref{HCNW3} and \ref{HNCNW3} with the above definition of Tate cohomology, we get the following corollary.
\medskip
\begin{corollary}\label{HPNW3}
For $A = k[x_1,x_2,\hdots,x_d]/\mathfrak{m}^2$, where $k$ is any commutative unital ring, $\mathfrak{m}$ is the ideal $(x_1,x_2,\hdots,x_d)$, $\bar{\mathfrak{m}} = \mathfrak{m} / \mathfrak{m}^{2}$, $HP_{*}(A) \cong \displaystyle\bigoplus_{w=0}^{\infty}HP_{*}^{(w)}(A)$, where $HP_{*}^{(w)}(A)$ is the homology of the subcomplex of weight $w$ terms, and we let $C_{w}= \langle \alpha : \alpha^{(w)}=1 \rangle$ acts on  $\bar{\mathfrak{m}}^{\otimes w}$  by 
$$
\alpha(x_{n_{1}}\otimes\cdots\otimes x_{n_{w}}) = (-1)^{w-1}(x_{n_{w}}\otimes x_{n_{1}} \otimes\cdots\otimes x_{n_{w-1}}) \> .
$$
we get for $w = 0$

\begin{center}
$
HP_{n}^{(0)}(A) \cong 
    \begin{dcases}
        k  & n \textrm{ even} \\
        0 & n \textrm{ odd} \\
    \end{dcases}
$
\end{center}

\medskip

$\\*$ and for $w > 0$

\begin{center}
$
   HP_{n}^{(w)}(A) = 
    \begin{dcases}
        \coker\big(\coker(1-t_{w-1}) \xrightarrow{N_{w-1}} \ker(1-t_{w-1})\big) & n = w + 2i  \textrm{ for } i\in\mathbb{Z}\\
        \ker\big(\coker(1-t_{w-1}) \xrightarrow{N_{w-1}} \ker(1-t_{w-1})\big) & n = w + 2i +1  \textrm{ for } i\in\mathbb{Z} \>  \\
    \end{dcases}
$
\end{center}

$\\*$which can be rewritten as
\begin{center}
$
HP_{n}^{(w)}(A) \cong  \hat{H}^{w-n}(C_{w},\bar{\mathfrak{m}}^{\otimes w})
$
\end{center}
$\\*$for the $C_{w}$ action on $\bar{\mathfrak{m}}^{\otimes w}$ explained above.

\end{corollary}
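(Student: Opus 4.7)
The plan is to mirror the arguments already carried out for $HC_*$ and $HC^-_*$, then match the resulting description against the definition of Tate cohomology. Since $b_n$, $b'_n$, $t_n$, and $N_n$ all preserve weight, the doubly infinite bicomplex of Definition \ref{periodic} splits as a direct sum of subcomplexes indexed by $w \geq 0$, giving $HP_*(A) \cong \bigoplus_{w \geq 0} HP_*^{(w)}(A)$. For $w = 0$, the normalized Hochschild complex is concentrated in row $q = 0$, so on the $E^1$-page of the column-filtered spectral sequence only the even columns in the bottom row survive. This forces $E^1 \cong E^\infty$ and $HP_n^{(0)}(A) \cong k$ when $n$ is even and $0$ otherwise.

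For $w > 0$, the same spectral sequence applies: by Equation (\ref{HHofRing2}) the even columns contribute $HH_*^{(w)}(A)$ concentrated in degrees $w-1$ and $w$, while the odd columns vanish. Hence $\partial^1 = 0$ and $E^1 \cong E^2$. For dimension reasons every higher differential $\partial^r$ with $r \geq 3$ is trivial, so $E^3 \cong E^\infty$, and the only potentially nonzero differential is $\partial^2 : HH_{w-1}^{(w)} \to HH_w^{(w)}$. The argument following Equation (\ref{HCNW}) applies verbatim to identify this $\partial^2$ with the map $N_{w-1} : \coker(1-t_{w-1}) \to \ker(1-t_{w-1})$, yielding Equation (\ref{HPNW2}).

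The final step is the identification with Tate cohomology. Splicing the projective resolution (\ref{r}) together with its $k[C_w]$-linear dual produces the standard complete projective resolution of $k$ by free $k[C_w]$-modules, whose differentials alternate between $1-\alpha$ and $N$ in both directions. Applying $\Hom_{k[C_w]}(-, \bar{\mathfrak{m}}^{\otimes w})$ (which, since the modules are free of finite rank, coincides up to the trivial $k[C_w]$-action with tensoring) yields a doubly infinite complex whose cohomology in degree $j$ is by definition $\hat{H}^j(C_w, \bar{\mathfrak{m}}^{\otimes w})$. By direct inspection, this cohomology equals $\coker\bigl(\coker(1-t_{w-1}) \xrightarrow{N_{w-1}} \ker(1-t_{w-1})\bigr)$ in even degrees and $\ker\bigl(\coker(1-t_{w-1}) \xrightarrow{N_{w-1}} \ker(1-t_{w-1})\bigr)$ in odd degrees. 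Under the substitution $j = w - n$, these match the two cases of Equation (\ref{HPNW2}) on the nose, giving $HP_n^{(w)}(A) \cong \hat{H}^{w-n}(C_w, \bar{\mathfrak{m}}^{\otimes w})$.

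The main obstacle is not conceptual but careful bookkeeping: one must verify that the $(1-\alpha, N)$-pattern of the complete resolution aligns with the columns of the periodic bicomplex so that the parity of $w - n$ correctly selects between kernel and cokernel. In particular, one should check that the formulas from Theorems \ref{HCNW3} and \ref{HNCNW3} glue consistently at the boundary degrees $n = w-1$ and $n = w$, where the description of $HP_n^{(w)}(A)$ must agree simultaneously with the positive-degree (group cohomology) and negative-degree (group homology) portions of Tate cohomology.
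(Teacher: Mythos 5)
Your proposal is correct and follows essentially the same route as the paper: split by weight, run the column-filtration spectral sequence to get $E^1\cong E^2$ and $E^3\cong E^\infty$, identify $\partial^2$ with $N_{w-1}\colon\coker(1-t_{w-1})\to\ker(1-t_{w-1})$ exactly as after Equation (\ref{HCNW}), and then match the even/odd kernel--cokernel pattern against Tate cohomology of $C_w$. The only cosmetic difference is that you realize the Tate groups via the spliced complete resolution, whereas the paper reads them off by comparing its piecewise definition of $\hat{H}^{n}$ with the already-computed $\Tor$ and $\Ext$ formulas (\ref{tor1})--(\ref{tor2}) and (\ref{ext1})--(\ref{ext2}); these amount to the same two-periodic computation.
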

Since $\mathbb{Q}$ is a projective $\mathbb{Q}[C_{w}]$-module, applying Equations (\ref{tor1}), (\ref{tor2}), (\ref{ext1}), and (\ref{ext2}), we get 
\begin{corollary}\label{PQ}
Let $A = \mathbb{Q}[x_1,x_2,\hdots,x_d]/\mathfrak{m}^2$ where $\mathfrak{m}$ is the ideal $(x_1,x_2,\hdots,x_d)$.
\medskip
$\\*$ Then for $w = 0$

\begin{center}
$
HP_{n}^{(0)}(A) \cong 
    \begin{dcases}
        \mathbb{Q}  & n \textrm{ even}  \\
        0 & \textrm{else} \\
    \end{dcases}
$
\end{center}

\medskip

$\\*$ and for $w > 0$

\begin{center}
$
HP_{n}^{(w)}(A) \cong 0.
$
\end{center}

\end{corollary}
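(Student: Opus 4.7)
The plan is to deduce this corollary directly from Corollary \ref{HPNW3} together with the Tor and Ext identifications in Equations (\ref{tor1}), (\ref{tor2}), (\ref{ext1}), and (\ref{ext2}). The single key input is that $\mathbb{Q}$ is a projective $\mathbb{Q}[C_w]$-module: since $w$ is invertible in $\mathbb{Q}$, Maschke's theorem implies $\mathbb{Q}[C_w]$ is semisimple, so every $\mathbb{Q}[C_w]$-module is projective (and injective). In particular, for every $w > 0$ and all $n \geq 1$,
$$
\Tor_n^{\mathbb{Q}[C_w]}(\mathbb{Q}, \bar{\mathfrak{m}}_{\mathbb{Q}}^{\otimes w}) = 0 \quad \text{and} \quad \Ext^n_{\mathbb{Q}[C_w]}(\mathbb{Q}, \bar{\mathfrak{m}}_{\mathbb{Q}}^{\otimes w}) = 0.
$$

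For the $w = 0$ case, the formula is simply the first clause of Corollary \ref{HPNW3} with $k = \mathbb{Q}$.

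For the $w > 0$ case, Corollary \ref{HPNW3} presents each $HP_n^{(w)}(A)$ as either the kernel or the cokernel of the connecting map
$$
N_{w-1}:\coker(1-t_{w-1}) \longrightarrow \ker(1-t_{w-1}).
$$
By Equations (\ref{tor1}) and (\ref{tor2}), this kernel and cokernel compute $\Tor_{1+2i}^{\mathbb{Q}[C_w]}(\mathbb{Q}, \bar{\mathfrak{m}}_{\mathbb{Q}}^{\otimes w})$ and $\Tor_{2+2i}^{\mathbb{Q}[C_w]}(\mathbb{Q}, \bar{\mathfrak{m}}_{\mathbb{Q}}^{\otimes w})$ respectively (or equivalently, by Equations (\ref{ext1}) and (\ref{ext2}), the analogous Ext groups). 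All of these vanish by projectivity, so $HP_n^{(w)}(A) = 0$ for every $n \in \mathbb{Z}$.

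There is essentially no obstacle here; the content of the corollary is entirely contained in the preceding theorems together with the semisimplicity of $\mathbb{Q}[C_w]$. Equivalently, in the Tate-cohomology reformulation $HP_n^{(w)}(A) \cong \hat{H}^{w-n}(C_w,\bar{\mathfrak{m}}_{\mathbb{Q}}^{\otimes w})$ from Corollary \ref{HPNW3}, the standard fact that Tate cohomology vanishes for a finite group acting on a module over a ring in which the group order is invertible immediately gives the conclusion.
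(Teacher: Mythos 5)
Your proposal is correct and follows exactly the paper's route: the paper likewise deduces the corollary by citing the projectivity of $\mathbb{Q}$ over $\mathbb{Q}[C_w]$ and applying Equations (\ref{tor1}), (\ref{tor2}), (\ref{ext1}), and (\ref{ext2}) to Corollary \ref{HPNW3}. Your added justification via Maschke's theorem and the Tate-cohomology reformulation are fine elaborations of the same argument.
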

Applying Lemma \ref{1}, Lemma \ref{2}, Equation (\ref{rel1}), and Equation (\ref{rel2}) to Corollary \ref{HPNW3}, we get 

\begin{corollary}\label{Zperiodic}

Let $A = \mathbb{Z}[x_1,x_2,\hdots,x_d]/\mathfrak{m}^2$ where $\mathfrak{m}$ is the ideal $(x_1,x_2,\hdots,x_d)$. Then

\begin{center}
$
HP_{n}^{(0)}(A) \cong 
    \begin{dcases}
        \mathbb{Z} & n  \textrm{ even} \\
        0 & n \textrm{ odd} \\
    \end{dcases}
$
\end{center}

\medskip

$\\*$ and for $w > 0$

\begin{center}
$
HP_{n}^{(w)}(A) \cong 
    \begin{dcases} 
       \displaystyle\bigoplus_{\substack{m \mid w \\ m \equiv w \Mod{2}}}\displaystyle\bigoplus_{\omega_{m,d}} \mathbb{Z}/\big(\tfrac{w}{m}\big)
         & n = w + 2i \textrm{ for } i \in \mathbb{Z} \\
        \\
       \displaystyle\bigoplus_{\substack{m \mid w \\ m \not\equiv w \Mod{2}}}\displaystyle\bigoplus_{\omega_{m,d}} \mathbb{Z}/2 & n = w + 1 + 2i\textrm{ for } i \in \mathbb{Z} \\
    \end{dcases}
$
\end{center}
$\\*$where $\omega_{m,d} = \{\textrm{all cycle families of words length $m$ and cycle length $m$ in $x_1,\hdots,x_d$}\}$  has order \newline $\frac{1}{m}\sum_{i \vert m}\mu(m/i) d^{i}$.
\end{corollary}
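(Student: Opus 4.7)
The plan is to read off $HP_n^{(w)}(A)$ from the general identification in Corollary \ref{HPNW3}, substituting the concrete $\mathbb{Z}$-case computations already carried out in Section 5.

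For $w=0$, the weight-zero part of the reduced Hochschild complex is concentrated in homological degree $0$ and equal to $\mathbb{Z}$, so every column of the weight-zero piece of the Tsygan bicomplex from Definition \ref{periodic} has $\mathbb{Z}$ in row $0$ and zero elsewhere. All differentials in the column-filtration spectral sequence vanish for degree reasons, so the total homology is $\mathbb{Z}$ in every even total degree and $0$ in odd total degrees, yielding the $w=0$ case of the corollary.

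For $w>0$, Corollary \ref{HPNW3} expresses $HP_n^{(w)}(A)$ as either the cokernel (when $n-w$ is even) or the kernel (when $n-w$ is odd) of the induced map $N_{w-1}\colon \coker(1-t_{w-1})\to \ker(1-t_{w-1})$ on $\bar{\mathfrak{m}}_\mathbb{Z}^{\otimes w}$. Equations (\ref{rel1}) and (\ref{rel2}), established in Section 5 for the computation of cyclic homology over $\mathbb{Z}$, rewrite this cokernel as
$$
\frac{\bigl(\im(N_{w-1}\colon \bar{\mathfrak{m}}_{\mathbb Q}^{\otimes w}\to \bar{\mathfrak{m}}_{\mathbb Q}^{\otimes w})\bigr)\cap \bar{\mathfrak{m}}_{\mathbb Z}^{\otimes w}}{\im(N_{w-1}\colon \bar{\mathfrak{m}}_{\mathbb Z}^{\otimes w}\to \bar{\mathfrak{m}}_{\mathbb Z}^{\otimes w})}
$$
and this kernel as the analogous quotient with $1-t_{w-1}$ in place of $N_{w-1}$. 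Lemma \ref{1} and Lemma \ref{2} then identify these two quotients as $\bigoplus_{m\mid w,\, m\equiv w \Mod 2}\bigoplus_{\omega_{m,d}}\mathbb{Z}/(w/m)$ and $\bigoplus_{m\mid w,\, m\not\equiv w \Mod 2}\bigoplus_{\omega_{m,d}}\mathbb{Z}/2$ respectively, which is precisely the stated formula.

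There is no serious obstacle here, since every substantive calculation has already been performed either in Section 5 or in Corollary \ref{HPNW3}; the only task is to check the indexing. The point worth verifying is that, unlike $HC$ (where $i\ge 0$) or $HC^{-}$ (where $i\le 0$), the double complex of Definition \ref{periodic} extends infinitely in both horizontal directions, so the $\partial^2$-periodicity of the spectral sequence holds two-sidedly, and consequently the same pair of groups recurs in every even (resp. odd) total degree. This is why the index $i$ appearing in the corollary ranges over all of $\mathbb{Z}$.
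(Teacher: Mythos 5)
Your proposal is correct and follows essentially the same route as the paper, which derives the corollary by applying Lemma \ref{1}, Lemma \ref{2}, and Equations (\ref{rel1}) and (\ref{rel2}) to Corollary \ref{HPNW3}. Your extra remarks on the $w=0$ case and on why the index $i$ ranges over all of $\mathbb{Z}$ are consistent with what the paper leaves implicit.
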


\section{Acknowledgements}
 The author would like to thank her advisor Ayelet Lindenstrauss for bringing this problem to her attention and for the endless amount of useful conversations on the road to solving it. The author would also like to thank Michael Mandell for many helpful discussions and to thank Michael Larsen for showing her a better way to define $\omega_{m,d}$ using the M\"obius function. Some of this work was supported under the Air Force Office of Scientific Research Grant FA9550-16-1-0212.
\bibliography{Bibby2}
\bibliographystyle{alpha}

\end{document}